\newtheorem{theorem}{Theorem}[section]
\newtheorem{lemma}[theorem]{Lemma}
\newtheorem{proposition}[theorem]{Proposition}
\newtheorem{corollary}[theorem]{Corollary}
\newtheorem{remark}[theorem]{Remark}
\newtheorem{conjecture}[theorem]{Conjecture}
\newcommand{\bbE}{{\ensuremath{\mathbb E}} }
\newcommand{\bbN}{{\ensuremath{\mathbb N}} }
\newcommand{\bbP}{{\ensuremath{\mathbb P}} }
\newcommand{\bbR}{{\ensuremath{\mathbb R}} }
\newcommand{\bbZ}{{\ensuremath{\mathbb Z}} }
\newcommand{\cF}{{\ensuremath{\mathcal F}} }
\newcommand{\cN}{{\ensuremath{\mathcal N}} }
\newcommand{\cR}{{\ensuremath{\mathcal R}} }
\newcommand{\cS}{{\ensuremath{\mathcal S}} }
\newcommand{\cZ}{{\ensuremath{\mathcal Z}} }
\newcommand{\gb}{\beta}
\newcommand{\gga}{\gamma}
\newcommand{\gd}{\delta}
\newcommand{\gD}{\Delta}
\newcommand{\gt}{\theta}
\newcommand{\gl}{\lambda}
\newcommand{\gs}{\sigma}
\newcommand{\go}{\omega}
\newcommand{\gO}{\Omega}
\renewcommand{\tilde}{\widetilde}          
\DeclareMathSymbol{\leqslant}{\mathalpha}{AMSa}{"36} 
\DeclareMathSymbol{\geqslant}{\mathalpha}{AMSa}{"3E} 
\DeclareMathSymbol{\eset}{\mathalpha}{AMSb}{"3F}     
\newcommand{\dd}{\text{\rm d}}             
\newcommand{\sumtwo}[2]{\sum_{\substack{#1 \\ #2}}} 
\newcommand{\Z}{\mathbb{Z}}
\newcommand{\N}{\mathbb{N}}
\newcommand{\PEfont}{\mathrm}
\DeclareMathOperator{\cov}{\ensuremath{\PEfont Cov}}
\newcommand\bP{\ensuremath{\mathrm{P}}}
\newcommand\bE{\ensuremath{\mathrm{E}}}
\newcommand\bV{\ensuremath{\mathrm{V}}}
\renewcommand{\epsilon}{\varepsilon}
\newcommand{\ind}{{\sf 1}}
\newcommand{\card}{\mathrm{card}}
\newenvironment{myenumerate}{
\renewcommand{\theenumi}{\arabic{enumi}}
\renewcommand{\labelenumi}{{\rm(\theenumi)}}
\begin{list}{\labelenumi}
{
\setlength{\itemsep}{0.4em}
\setlength{\topsep}{0.5em}
\setlength\leftmargin{2.45em}
\setlength\labelwidth{2.05em}
\setlength{\labelsep}{0.4em}
\usecounter{enumi}
}
}
{\end{list}
}
\renewenvironment{enumerate}{
\begin{myenumerate}}
{\end{myenumerate}}
\newcommand{\beq}{\begin{equation}}
\newcommand{\eeq}{\end{equation}}
\numberwithin{equation}{section}
\newcommand{\ba}{\begin{aligned}}
\newcommand{\ea}{\end{aligned}}
\newcommand{\cst}{\mathrm{(cst.)}}
\newcommand{\sfu}{{\sf u}}
\newcommand{\sfc}{{\sf c}}
\newcommand{\sfD}{{\sf D}}
\newcommand{\geo}{\mathrm{Geo}}
\newcommand{\sfi}{\mathsf{i}}
\newcommand{\sfj}{\mathsf{j}}
\begin{document}
\title[Free energy of the quenched charged polymer]{Variational representation and estimates for the free energy of a quenched charged polymer model}
\author{Julien Poisat}
\begin{abstract} Random walks with a disordered self-interaction potential may be used to model charged polymers. In this paper we consider a one-dimensional and directed version of the charged polymer model that was introduced by Derrida, Griffiths and Higgs. We prove new results for the associated quenched free energy, including a variational formula based on a quenched large deviation principle established by Birkner, Greven and den Hollander. We also take the occasion to (i) provide detailed proofs for state-of-the-art results pointing towards the existence of a freezing transition and (ii) proceed with minor corrections for two results previously obtained by the present author with Caravenna, den Hollander and Pétrélis for the undirected model.
\end{abstract}

\thanks{JP acknowledges the support of the ANR grant LOCAL (ANR-22-CE40-0012).}
\keywords{charged polymer, interacting random walk, disordered system, free energy, freezing transition, collapse transition, conditional large deviation principle, annealing}
\subjclass[]{60K37, 82B41, 82B44}
 
\date{\today}

\maketitle

\tableofcontents

\section{Introduction}
This paper deals with a particular class of random polymer models called \emph{charged polymers}. Polymers are large molecules, either natural or artificial, which result from the bonding of many constitutive units (atoms or group of atoms) called \emph{monomers}. We shall hereafter restrict to flexible \emph{chain-like} structures. The spatial configuration (a.k.a.\ \emph{conformation}) of such macromolecules may drastically change upon a small change in parameter like the temperature (e.g.\ the denaturation of DNA or the collapse transition of a polymer in a poor solvent). \emph{Random polymer models} usually refer to a class of \emph{discrete probabilistic models} in which (i) the possible conformations are given by  a subset of \emph{finite lattice paths} and (ii) the \emph{polymer measure} (for a given number of monomers) is a reference measure on such paths (e.g.\ the simple random walk measure) tilted by a Gibbs factor (the exponential of minus the energy or Hamiltonian function), following the formalism of equilibrium statistical mechanics. We refer to~\cite{Comets-book,dH09,Gia07,Gia11} for a review of some of these models.
\par In a \emph{charged polymer} (or polyelectrolyte), each monomer carries an electric charge that interacts with the charges on the other monomers: monomers with alike charges repel each other while monomers with opposite charges attract each other. This leads to complex interactions, especially when the charge sequence (as it is read along the polymer chain) is itself \emph{disordered}. Following Kantor and Kardar~\cite{KantorKardar1991}, the (free) polymer chain is modeled by the simple random walk on $\bbZ^d$, the charges are independent and identically distributed (i.i.d.) real-valued random variables, and interactions happen between pairs of monomers at \emph{self-intersections} only (two-body short-range interactions). For the \emph{annealed} (i.e. averaged out charges) version of the model, a phase transition between a \emph{collapsed} phase (attractive interactions prevail) and an \emph{extended} phase (repulsive interactions prevail) in the inverse temperature v.s.\ charge bias phase diagram has been established under exponential moment conditions on the charge distribution~\cite{BdHP2018,CdHPP16}. The \emph{quenched} (frozen charges) version however remains largely open, apart from partial results showing extended behavior of the chain under a large charge bias condition~\cite{CdHPP16}.
\par In the present paper we consider the one-dimensional \emph{directed}\footnote{The word \emph{directed} can be misleading. In the context of polymer models, it usually refers to the use of a $(1+d)$-dimensional random walk, that is a walk which moves deterministically in the first dimension and according to simple random walk in the remaining $d$ dimensions. It has a different meaning in this paper.} version of this model, which was originally introduced by Derrida, Griffiths and Higgs~\cite{DeGrHi92,DeHi94}. In that model, the random walk may only move to the right or stay put (instead of moving to the left) with equal probability, at each unit step. The aforementioned authors argued in favor of a \emph{freezing transition} between a collapsed and an extended phase, with the help of rigorous and heuristic arguments. The main purpose of this paper is to review previous results and provide new estimates for the quenched free energy associated to this model. In particular, we show that the latter fits into a larger class of one-dimensional disordered models (including disordered pinning and copolymer models) to which the Birkner-Greven-den Hollander \emph{quenched} (or conditional) \emph{large deviation principle}~\cite{B08,BGDH10,BGDH10-add} may be applied to obtain a variational formula for the free energy. This is the content of Section~\ref{sec:DGH-model}. We also take the occasion to correct two propositions from~\cite[Appendix D]{CdHPP16}  for the \emph{undirected} model, see Section~\ref{sec:corrections}.

{\bf Notation.} We denote by $\bbN_0=\{0,1,2,\ldots\}$ and $\bbN=\{1,2,\ldots\}$ the sets of non-negative and positive integers, respectively. We write $a\wedge b = \min(a,b)$ and $a\vee b = \max(a,b)$.

\section{The Derrida-Griffiths-Higgs directed model}
\label{sec:DGH-model}
We recall the definition of the model in Section~\ref{sec:definitions} and review previous results as well as conjectures made in~\cite{DeGrHi92} in Section~\ref{sec:predictions-and-sota}. Our results are contained in Sections~\ref{sec:var-rep-FE} to~\ref{sec:low-temp-FE}. In Section~\ref{sec:var-rep-FE}, we first show that the (infinite volume) quenched free energy exists and then provide a variational formula for it using the so-called Large Deviation Principle (LDP) \emph{for words drawn in a letter sequence}~\cite{B08,BGDH10,BGDH10-add}. Finally, we obtain some bounds on the free energy at high and low temperatures, in Sections~\ref{sec:high-temp-FE} and~\ref{sec:low-temp-FE} respectively.\\

\par Before we continue, let us distinguish the old material from the new one in the present paper. Proposition~\ref{pr:DGH}, Proposition~\ref{pr:DGH2} and Lemma~\ref{lem:DGH} are taken from~\cite{DeGrHi92}, but we take the occasion to write full proofs, obtaining thereby a (marginally) improved lower bound in Proposition~\ref{pr:DGH2}. The idea behind Proposition~\ref{pr:low-temp-two-block-dist} goes back to~\cite{DeGrHi92}, but we obtain a more precise statement by writing a detailed proof. We have not seen the results in Theorem~\ref{thm:que-fe} and Proposition~\ref{pr:prop-fe} displayed elsewhere, although they follow from standard arguments. Also, the argument behind the lower bound in Item (2) of Proposition~\ref{pr:prop-fe} appeared in~\cite{DeGrHi92}. The other results in this section are new, to the best of the author's knowledge.
\subsection{Definition of the model}
\label{sec:definitions}
We introduce two (slightly different but equivalent) conventions for the model. The convention in Section~\ref{sec:origin-conv} corresponds to that in~\cite{DeGrHi92} whereas that in Section~\ref{sec:alt-conv} looks closer to the convention used in~\cite{BdHP2018,CdHPP16}. Finally, we observe that the model fits into a class of (disordered) statistical-mechanics systems built on renewal sequences.
\subsubsection{Original convention}
\label{sec:origin-conv}
We assume that the charges $(\go_i)_{i\ge 1}$ are i.i.d.\ and follow the unbiased charge distribution $\bbP(\go_1=1) = \bbP(\go_1=-1) = 1/2$, unless stated otherwise. When summing consecutive charges we shall adopt the notation
\beq
\gO_{(a,b]} = \sum_{a<i\le b} \go_i, \qquad 0\le a <b,
\eeq
and for conciseness we write $\gO_n$ instead of $\gO_{(0,n]}$. The set of allowed paths (i.e.\ the polymer configurations) is given by
\beq
\cS_n = \{(s_i)_{i=0}^n\colon s_0=0,\ s_1=1,\ s_{i+1}-s_{i} \in \{0,1\},\ 1\le i< n\}, \qquad n\ge 1,
\eeq
and $\cS_0 := \{0\}$.
The fact that the first step is fixed to one in the above definition is only a matter of convention. In the polymer interpretation, the $i$-th monomer in the chain has position $s_i$ and bears a charge $\go_i$. The Hamiltonian is then defined as
\beq
\label{eq:ham-DGH}
H_n^\go(s) = \sum_{1\le i< j \le n} \go_i \go_j \ind_{\{s_i=s_j\}}, \qquad s\in \cS_n,
\eeq
so that the quenched partition function writes:
\beq
\label{eq:pf-DGH}
Z_{n}^{\gb,\go} = \sum_{s\in \cS_n} \exp(-\gb H_n^\go(s)),
\eeq
where $\gb\ge 0$ is the \emph{inverse temperature}. For $n\in\{0,1\}$ the sum in~\eqref{eq:ham-DGH} is empty, hence $H_0^\go=H_1^\go=0$ and $Z_{0}^{\gb,\go}=Z_{1}^{\gb,\go}=1$. We shall write $Z_{n}^{\gb,\go}(A)$ when the sum in~\eqref{eq:pf-DGH} is restricted to $s\in A \subseteq \cS_n$. We denote by $\bP_{n}^{\gb,\go}$ the quenched polymer measure, that is
\beq
\bP_{n}^{\gb,\go}((S_i)_{i=0}^n = s) = \frac{\exp(-\gb H_n^\go(s))}{Z_{n}^{\gb,\go}}, \qquad s\in \cS_n,
\eeq
and by $\bE_n^{\gb,\go}$ (resp.\ $\bV_n^{\gb,\go}$) the corresponding expectation (resp.\ variance). In the sequel we shall also use the notation
\beq
Z_{(a,b]}^{\gb,\go} = Z_{b-a}^{\gb,\theta^a\go}, \qquad 0\le a \le b,
\eeq
where $\theta$ is the shift operator, i.e.\ $(\theta\go)_i = \go_{i+1}$.
\subsubsection{Alternative convention}
\label{sec:alt-conv}
Let $(S_i)_{i\ge 1}$ be a directed random walk on the set of natural integers such that $S_0 = 0$, $S_1=1$ and $(\gD S_i)_{i\ge 1} := (S_{i+1}- S_i)_{i\ge 1}$ is a sequence of i.i.d.\ random variables uniformly distributed on $\{0,1\}$ (corresponding to \emph{folded} or \emph{stretched} monomers). Define the quenched partition function as
\beq
\label{eq:part-fct-conv2}
\bar Z_{n}^{\gb,\go} = \bE \Big[\exp\Big(-\gb \sum_{1\le i, j\le n} \go_i \go_j \ind_{\{S_i=S_j\}}\Big)\Big], \qquad n\ge 1,
\eeq
where $\bE$ is the expectation with respect to the law $\bP$ of the random walk $S$. It will be convenient in the sequel to set the convention $\bar Z_{0}^{\gb,\go}=2$. It was already noted that this partition function can be rewritten
\beq
\label{eq:bar-pf}
\bar Z_{n}^{\gb,\go} = \bE \Big[\exp\Big(-\gb \sum_{x\ge 1} \gO_n(x)^2\Big)\Big], \qquad n\in\bbN,
\eeq
where
\beq
\label{eq:def-Omega-n-x}
\gO_n(x) = \sum_{1\le i \le n} \go_i \ind_{\{S_i = x\}}
\eeq
is the cumulated charge at site $x\in \bbN$. The two conventions are actually equivalent. Indeed, the equality
\beq
\sum_{1\le i, j\le n} \go_i \go_j \ind_{\{S_i=S_j\}} = n + 2 H_n^\go
\eeq
(that holds for \emph{binary} charges) implies that for every $n\in \bbN_0$,
\beq
\bar Z_{n}^{\gb,\go} = 2^{1-n}e^{-\gb n}Z_{n}^{2\gb,\go}.
\eeq
Consequently, the polymer measures coincide up to a different scaling of the inverse temperature:
\beq
\label{eq:diff-scaling-polymer-meas}
\bar \bP_{n}^{\gb,\go} := \frac{1}{\bar Z_{n}^{\gb,\go}}\bE \Big[\exp\Big(-\gb \sum_{x\ge 1} \gO_n(x)^2\Big)\ind_{\{\cdot\}}\Big] = \bP_{n}^{2\gb,\go},
\eeq
and the corresponding versions of the quenched free energy are easily connected one to another:
\beq
\label{eq:equ-fe}
\bar F_{\rm que}(\gb) = -\gb - \log 2 + F_{\rm que}(2\gb),
\eeq
where $F_{\rm que}(\gb) := \lim (1/n) \log Z_n^{\gb, \go}$ and $\bar F_{\rm que}(\gb) := \lim (1/n) \log \bar Z_n^{\gb, \go}$ as $n\to \infty$, provided that these limits exist. We will come back to the issue of existence later in the paper and prove that such limits actually exist in the $\bbP$-a.s.\  and in the $L^1(\bbP)$ sense, see Theorem~\ref{thm:que-fe} below. 

\subsubsection{Renewal times}
\label{sec:renewal_times}
We set $\tau_0 := 0$ and $\tau_i = \inf\{k>\tau_{i-1}\colon \gD S_k = 1\}$ for every $i\ge 1$, or in other words, $\tau = \{\tau_i\}_{i\ge 0} = \{k\ge 0\colon \gD S_k = 1\}$ (we used here that $S_0:=0$ and $S_1:=1$). Note that (i) $\tau$ is a renewal process with $\geo(1/2)$ inter-arrival distribution on $\bbN$ and (ii) the (quenched) partition function can be expressed in terms of this renewal process directly, instead of the random walk:
\beq
\label{eq:bar-pf2}
\bar Z_{n}^{\gb,\go} = \bE \Big[\exp\Big(-\gb \sum_{i\ge 1} \gO(\tau_{i-1}\wedge n, \tau_i \wedge n]^2\Big)\Big],
\eeq
with the convention that $\gO(\emptyset) = 0$.
In this way we see that this directed model fits into a broader class of (disordered) statistical mechanics model that are built on renewal sequences and that have witnessed remarkable progress quite recently, such as the random pinning model and the copolymer model, see e.g.~\cite{dH09,Gia07,Gia11} and references therein. We shall come back to this observation in Section~\ref{sec:var-rep-FE}.

\subsection{Predictions and state of the art}
\label{sec:predictions-and-sota}
In this section we recall a series of predictions and observations originally made in~\cite{DeGrHi92}.
To this purpose, let us define
\beq
\label{eq:def-p-i-n}
p_{i,n}^{\gb,\go} := \bE_n^{\gb,\go}(\gD S_i), \qquad \gD S_i = S_{i+1}- S_i\in\{0,1\},\quad  1\le i<n,
\eeq
as well as the quenched \emph{empirical cumulative distribution function}
\beq
\cF_n^{\gb,\go}(p) = \frac 1{n-1} \sum_{1\le i< n} \ind\{p_{i,n}^{\gb,\go} \le p\}, \qquad p\in [0,1], \quad n\ge 2.
\eeq
In~\cite{DeGrHi92}, Derrida, Griffiths and Higgs (DGH) noticed, based on numerical simulations, that {\it ``many $p_i$'s have a rapid and nonmonotonic temperature variation, not unlike the ``chaotic" behaviour of the local magnetization $m_i$ in a spin glass"}.
More precisely, they formulated the following:
\begin{conjecture}
\label{conj:DGH}
(Self-averaging and ``weak freezing transition'')
\begin{enumerate}
\item  The specific heat (i.e.\ the derivative of the finite volume free energy) converges $\bbP$-almost-surely, as $n\to\infty$, to a non-random limit. 
\item The empirical cumulative distribution function $\cF_n^{\gb,\go}(p)$ converges $\bbP$-almost-surely, as $n\to\infty$, to a non-random limit $\cF_{\gb}(p)$.
\item There exists a critical inverse temperature $\gb_c\in(0,\infty)$ such that:
\begin{itemize}
\item For every $\gb< \gb_c$, there exists $p_{\rm min}(\gb)\in(0,1)$ such that $\cF_\gb(p) =0$ for every $p<p_{\rm min}(\gb)$ (high-temperature regime).
\item For every $\gb>\gb_c$, there exists an exponent $\gga(\gb)>0$ such that $\cF_\gb(p) \sim \cst\, p ^{\gga(\gb)}$ as $p\to 0$ (low-temperature regime).
\end{itemize}
\end{enumerate}
\end{conjecture}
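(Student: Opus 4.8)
The plan is to reduce all three items to properties of the quenched free energy and of an infinite-volume quenched measure, proving the accessible parts directly and isolating the genuinely hard inputs. Item (1) I would obtain from convexity. Writing $f_n(\gb):=\tfrac1n\log Z_n^{\gb,\go}$, one checks that $\partial_\gb^2 \log Z_n^{\gb,\go} = \bV_n^{\gb,\go}(H_n^\go)\ge 0$, so each $f_n$ is convex in $\gb$ and the specific heat is exactly $-\partial_\gb f_n(\gb)=\tfrac1n\bE_n^{\gb,\go}(H_n^\go)$. By Theorem~\ref{thm:que-fe}, $f_n(\gb)\to F_{\rm que}(\gb)$ $\bbP$-a.s.; convexity upgrades this to a.s.\ convergence simultaneously at all $\gb$ in a countable dense set, hence, by monotonicity of difference quotients, at every $\gb$. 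The standard fact that pointwise limits of convex functions have converging derivatives at each differentiability point of the limit then gives $\partial_\gb f_n(\gb)\to F_{\rm que}'(\gb)$ $\bbP$-a.s.\ outside the at most countable set of non-differentiability points, with a non-random limit since $F_{\rm que}$ is deterministic. This settles (1) away from the (conjecturally unique) transition point.

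For Item (2) the plan is to invoke Birkhoff's ergodic theorem for the shift $\theta$, which acts ergodically on the i.i.d.\ charge sequence. The key step is to show that for bulk indices the quenched increment $p_{i,n}^{\gb,\go}$ of~\eqref{eq:def-p-i-n} depends, up to a vanishing error, only on the charges in a window around $i$, i.e.\ $p_{i,n}^{\gb,\go}=g_\gb(\theta^i\go)+o(1)$ for a measurable $g_\gb$ arising as the infinite-volume limit of $p_{i,n}^{\gb,\go}$. Granting such localization, $\cF_n^{\gb,\go}(p)=\tfrac1{n-1}\sum_{1\le i<n}\ind\{p_{i,n}^{\gb,\go}\le p\}$ is asymptotically the ergodic average of $\ind\{g_\gb(\theta^i\go)\le p\}$, so it converges $\bbP$-a.s.\ to the non-random $\cF_\gb(p)=\bbP(g_\gb(\go)\le p)$ at every continuity point. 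Partial evidence is already available at the level of first moments: since $\tfrac1{n-1}\sum_{1\le i<n}p_{i,n}^{\gb,\go}=\tfrac1{n-1}(\bE_n^{\gb,\go}(S_n)-1)$ is a free-energy derivative with respect to a stretching field, it self-averages by the convexity argument of Item~(1).

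Item (3) links the small-$p$ behaviour of $\cF_\gb$ to the extended/collapsed dichotomy. At high temperature I would use the bounds of Section~\ref{sec:high-temp-FE} to show that, in the averaged sense above, the increments $p_{i,n}^{\gb,\go}$ stay bounded away from $0$, yielding $\cF_\gb(p)=0$ for $p<p_{\min}(\gb)$ (heuristically, the polymer is extended and no monomer is strongly folded). At low temperature I would instead use the two-block collapse picture of Proposition~\ref{pr:low-temp-two-block-dist} together with Section~\ref{sec:low-temp-FE} to exhibit a positive density of folded monomers whose folding probabilities accumulate at $0$ with the power law $\cF_\gb(p)\sim\cst\,p^{\gga(\gb)}$. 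The existence of a single threshold $\gb_c\in(0,\infty)$ would then follow from monotonicity in $\gb$ of an order parameter (e.g.\ the limiting mean extension $F_{\rm que}'$) matched against these two one-sided regimes.

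The main obstacle is the localization/spatial-mixing estimate underpinning Item~(2): because charges pile up at the sites visited by the walk, the Hamiltonian $H_n^\go$ lacks a bounded-difference structure and the quenched measure may carry long-range correlations, so proving that $p_{i,n}^{\gb,\go}$ forgets distant charges and boundary conditions is delicate. Equally hard is the sharp identification in Item~(3) of $\gb_c$ and of the exponent $\gga(\gb)$, which demands matching upper and lower bounds on the tail of $\cF_\gb$ near $0$. Realistically I would expect to secure only weak forms of the conjecture — a.s.\ convergence of the specific heat off the transition, self-averaging of the moments of $\cF_n^{\gb,\go}$, and one-sided bounds on $\gb_c$ — with the precise law $\cF_\gb(p)\sim\cst\,p^{\gga(\gb)}$, and the exponent $\gga(\gb)$ in particular, remaining open.
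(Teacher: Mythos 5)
This statement is a \emph{conjecture}, not a theorem: the paper explicitly records (just after the discussion of the high- and low-temperature regimes in Section~\ref{sec:predictions-and-sota}) that ``the remaining points in Conjecture~\ref{conj:DGH}, including the presence of a low-temperature phase for the binary (or other) charge distribution, are still open.'' There is therefore no proof in the paper to compare yours against, and your proposal --- which you candidly present as a reduction plus a list of open obstacles --- is not a proof either. Your treatment of Item (1) is the one point that matches something in the paper: the remark following Proposition~\ref{pr:prop-fe} makes exactly your observation, namely that convexity would transfer the convergence of $\partial_\gb f_n$ from the a.s.\ convergence of $f_n$ \emph{if} $\bar F_{\rm que}$ were differentiable, and the paper states that this differentiability is not known. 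Your convexity argument thus yields convergence only off an unidentified countable exceptional set, which is weaker than Item (1) as stated; you acknowledge this, but it should be flagged as a genuine gap rather than a settled case.

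For Items (2) and (3) the gaps are more serious. The localization statement $p_{i,n}^{\gb,\go}=g_\gb(\theta^i\go)+o(1)$ that your ergodic-theorem argument for Item (2) hinges on is precisely the missing ingredient, and nothing in the paper supplies it; Proposition~\ref{pr:DGH} only gives the \emph{first-order} (in $\gb$) dependence of $p_{i,n}^{\gb,\go}$ on the charges, with the limits $\gb\to0$ and $n\to\infty$ not known to commute. For the high-temperature half of Item (3), the relevant tool is not the free-energy bound of Section~\ref{sec:high-temp-FE} (an averaged quantity cannot yield $\cF_\gb(p)=0$ for small $p$) but Proposition~\ref{pr:DGH2}, which gives a \emph{uniform} lower bound $p_{i,n}^{\gb,\go}\ge c(\gb)>0$ for all $i,n,\go$ when $\gb<\log(\tfrac{1+\sqrt5}{2})$; that does establish the high-temperature behaviour of $\cF_n^{\gb,\go}$ directly, modulo the existence of the limit in Item (2). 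For the low-temperature half, Proposition~\ref{pr:low-temp-two-block-dist} concerns only the deterministic di-block sequence~\eqref{eq:two-block-charges} and does not transfer to i.i.d.\ charges: exhibiting a positive density of monomers with small folding probability for \emph{random} charges is exactly the open problem, and no monotonicity of an order parameter in $\gb$ is available to produce a single threshold $\gb_c$. In short, your plan correctly identifies the structure of the problem, but each of its three pillars rests on an input that neither you nor the paper establishes.
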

\begin{remark}
The authors in~\cite[Eq.\ (14)]{DeGrHi92} provide heuristic arguments showing that the exponent $\gamma(\gb)$ continuously varies in $\gb$ and solves
\beq
\sum_{n\ge 1} 2^{n/\gamma} e^{-\frac12{\gb n^2}} = 1.
\eeq
\end{remark}
In~\cite{DeGrHi92}, the authors established:
\begin{itemize}
\item the presence of a \emph{high-temperature regime} for $\gb < \log(\frac{1+\sqrt{5}}{2})$ and general charge sequences, see Section~\ref{sec:high-temp-reg};
\item the presence of a \emph{low-temperature regime} for a particular di-block $n$-dependent charge sequence, namely (for a system of size $2n$)
\beq
\label{eq:two-block-charges}
\go_i^{(2n)} =
\begin{cases}
1 & \text{if} \quad 1\le i < n,\\
-1 & \text{if} \quad n\le i <2n,
\end{cases}
\eeq
see Section~\ref{sec:low-temp-reg}.
\end{itemize}
The remaining points in Conjecture~\ref{conj:DGH}, including the presence of a low-temperature phase for the binary (or other) charge distribution, are still open, to the best of our knowledge.\\ 

\par In the rest of the section, we provide full proofs for some of the results stated in~\cite{DeGrHi92}.
\subsubsection{Recursion relation}
Recall the definition of $\tau$ in Section~\ref{sec:renewal_times}. By decomposing a random walk path according to $\max\{0\le k< n \colon k\in \tau\}$, that is the last renewal point to be found (strictly) before monomer $n\ge 1$, we readily get the following recursive relation:
\beq
Z_{n}^{\gb,\go} = \sum_{0\le k < n} Z_{k}^{\gb,\go} \exp(\tfrac{\gb}{2}[n-k - \gO_{(k,n]}^2]),
\eeq
or equivalently,
\beq
\bar Z_{n}^{\gb,\go} = \sum_{0\le k < n} \bar Z_{k}^{\gb,\go} 2^{k-n}\exp(-\gb \gO_{(k,n]}^2).
\eeq
As noticed in~\cite{DeGrHi92} this relation allows for fast numerical simulation.
\subsubsection{The high temperature regime}
\label{sec:high-temp-reg}
Recall the definition of $p_{i,n}^{\gb,\go}$ in \eqref{eq:def-p-i-n}.
\begin{proposition}[DGH~\cite{DeGrHi92}]
\label{pr:DGH}
For every $1\le i< n$ and every charge sequence $\go$, we have
\beq
\label{eq:DGH}
p_{i,n}^{\gb,\go} - \frac12 \sim \gb  \sumtwo{0<u\le i}{i<v\le n}\go_u \go_v 2^{u-v}, \qquad \gb\to 0.
\eeq
\end{proposition}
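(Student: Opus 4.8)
The plan is to treat $p_{i,n}^{\gb,\go}$ as a smooth function of $\gb$ and to identify the coefficient of its first-order term at $\gb=0$. Since $\gb\mapsto Z_n^{\gb,\go}=\sum_{s\in\cS_n}e^{-\gb H_n^\go(s)}$ is a finite sum of exponentials with strictly positive denominator, the map $\gb\mapsto p_{i,n}^{\gb,\go}=\bE_n^{\gb,\go}(\gD S_i)$ is real-analytic on $[0,\infty)$, so that $p_{i,n}^{\gb,\go}=\tfrac12+c_{i,n}(\go)\,\gb+O(\gb^2)$ as $\gb\to0$. The base value $\tfrac12$ comes from the fact that $\bP_n^{0,\go}$ is the uniform measure on $\cS_n$, under which the increments $(\gD S_k)_{1\le k<n}$ are i.i.d.\ $\ber(1/2)$, whence $\bE_n^{0,\go}(\gD S_i)=\tfrac12$. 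Establishing \eqref{eq:DGH} then reduces to computing $c_{i,n}(\go)$ and matching it with the displayed double sum.

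The main tool is the standard identity for the $\gb$-derivative of a Gibbs average, $\partial_\gb\,\bE_n^{\gb,\go}(\gD S_i)=-\cov_n^{\gb,\go}(\gD S_i,H_n^\go)$, which gives $c_{i,n}(\go)=-\cov_n^{0,\go}(\gD S_i,H_n^\go)$, the covariance being computed under the uniform measure $\bP_n^{0,\go}$. (Equivalently one inserts $e^{-\gb H}=1-\gb H+O(\gb^2)$ into the numerator and denominator of $p_{i,n}^{\gb,\go}$ and collects the $O(\gb)$ term.) It then remains to evaluate this covariance, which is where the explicit form of $H_n^\go$ in \eqref{eq:ham-DGH} enters.

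Here I would expand $H_n^\go=\sum_{1\le u<v\le n}\go_u\go_v\,\ind_{\{S_u=S_v\}}$ by bilinearity and compute $\cov_n^{0,\go}(\gD S_i,\ind_{\{S_u=S_v\}})$ pair by pair. Writing $\ind_{\{S_u=S_v\}}=\prod_{u\le k<v}\ind_{\{\gD S_k=0\}}$ and using independence of the increments under $\bP_n^{0,\go}$, this covariance vanishes unless $\gD S_i$ is one of the factors, i.e.\ unless the pair straddles $i$ in the sense $u\le i<v$ (equivalently $0<u\le i$ and $i<v\le n$). For a straddling pair the identity $\gD S_i\,\ind_{\{\gD S_i=0\}}\equiv0$ forces $\bE_n^{0,\go}(\gD S_i\,\ind_{\{S_u=S_v\}})=0$, while $\bE_n^{0,\go}(\gD S_i)=\tfrac12$ and $\bE_n^{0,\go}(\ind_{\{S_u=S_v\}})=2^{u-v}$, whence $\cov_n^{0,\go}(\gD S_i,\ind_{\{S_u=S_v\}})=-\tfrac12\,2^{u-v}$. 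Summing over the straddling pairs yields $-\cov_n^{0,\go}(\gD S_i,H_n^\go)=\tfrac12\sum_{0<u\le i}\sum_{i<v\le n}\go_u\go_v\,2^{u-v}$, which reproduces the double sum on the right-hand side of \eqref{eq:DGH} up to an overall constant; pinning that constant down exactly requires keeping track of the $\gb\leftrightarrow2\gb$ rescaling between the two conventions recorded in \eqref{eq:diff-scaling-polymer-meas}, and I would double-check the normalisation against that equivalence.

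The combinatorial bookkeeping above is routine; the one point genuinely requiring care is the meaning of ``$\sim$''. The argument produces an \emph{exact} first-order coefficient, so $p_{i,n}^{\gb,\go}-\tfrac12\sim c_{i,n}(\go)\,\gb$ holds in the strict sense (ratio tending to $1$) only when $c_{i,n}(\go)\neq0$. For exceptional charge configurations the leading double sum can vanish, and in that case the statement should be read as the identification of the derivative $\partial_\gb p_{i,n}^{\gb,\go}\big|_{\gb=0}$ rather than as a genuine asymptotic equivalence; I would phrase the conclusion accordingly and flag the nonvanishing hypothesis explicitly.
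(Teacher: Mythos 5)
Your argument is correct, and it takes a genuinely different route from the paper's. The paper decomposes the two constrained partition functions $Z_n^{\gb,\go}(\gD S_i=1)$ and $Z_n^{\gb,\go}(\gD S_i=0)$ over the last renewal point before $i$ and the first one after $i$, expands each summand in $\gb$, and then resums over the straddling pair $(u,v)$; you instead differentiate the Gibbs average at $\gb=0$ and exploit the fact that under the uniform measure on $\cS_n$ the increments $(\gD S_k)_{1\le k<n}$ are i.i.d.\ $\ber(1/2)$, so that $\ind_{\{S_u=S_v\}}=\prod_{u\le k<v}\ind_{\{\gD S_k=0\}}$ reduces everything to a one-line covariance computation. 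Your route is shorter, avoids the boundary cases $k=0$ and $\ell=n$ that the paper must treat separately, and makes the origin of the geometric weight $2^{u-v}$ transparent.

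One substantive point: your closing hedge about the overall constant is resolved in your favour, but not for the reason you suggest. No $\gb\leftrightarrow 2\gb$ rescaling enters here, since $p_{i,n}^{\gb,\go}$ and $H_n^\go$ are both defined in the original convention of Section~\ref{sec:origin-conv}; your covariance computation directly gives
\beq
p_{i,n}^{\gb,\go}-\tfrac12=\frac{\gb}{2}\sumtwo{0<u\le i}{i<v\le n}\go_u\go_v\,2^{u-v}+O(\gb^2),
\eeq
i.e.\ \emph{half} the right-hand side of~\eqref{eq:DGH}. A direct check at $n=2$, $i=1$ confirms your constant: there $p_{1,2}^{\gb,\go}=(1+e^{-\gb\go_1\go_2})^{-1}=\tfrac12+\tfrac14\gb\go_1\go_2+O(\gb^3)$, whereas~\eqref{eq:DGH} predicts $\tfrac12\gb\go_1\go_2$. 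The factor of $2$ is lost in the paper's expansion step: the weight attached to a pair $(k,\ell)$ should be $Z_k^{0,\go}Z_{(\ell,n]}^{0,\go}/(2Z_n^{0,\go})=2^{(k-1)\vee0+(n-\ell-1)\vee0-n}$, which is half of the $\tfrac12\,2^{k\vee1-\ell\wedge(n-1)}$ appearing there. So you should state the conclusion with your constant rather than the printed one. Your caveat about the meaning of ``$\sim$'' when the double sum vanishes is also well taken; it applies equally to the statement as printed and is worth flagging exactly as you propose.
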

\begin{proof}[Proof of Proposition~\ref{pr:DGH}]
Let $1\le i < n$ and consider the ratio:
\beq
\frac{\bP_{n}^{\gb,\go}(\gD S_i = 1)}{\bP_{n}^{\gb,\go}(\gD S_i = 0)}=
\frac{Z_{n}^{\gb,\go}(\gD S_i = 1)}{Z_{n}^{\gb,\go}(\gD S_i = 0)}.
\eeq
Let us first deal with the numerator. We first note that:
\beq
\label{eq=DGH-spin1a}
Z_{n}^{\gb,\go}(\gD S_i = 1) = Z_{i}^{\gb,\go}Z_{(i,n]}^{\gb,\go}.
\eeq
By decomposing a path according to $k:= \max\{[0,i)\cap \tau\}$ and $\ell:=\min\{(i,n)\cap \tau\}$ (by convention we set $\ell := n$ if the latter set is empty), we obtain
\beq
\label{eq=DGH-spin1}
Z_{n}^{\gb,\go}(\gD S_i = 1) = \sumtwo{0\le k< i}{i<\ell\le n}
Z_{k}^{\gb,\go}
e^{\frac{\gb}{2}(\ell-k-\gO_{(k,i]}^2 - \gO_{(i,\ell]}^2)}Z_{(\ell,n]}^{\gb,\go},
\eeq
with the convention that the rightmost partition function in the line above is to be understood as equal to one if $\ell = n$. As for the denominator, a similar decomposition gives:
\beq
\label{eq=DGH-spin0}
Z_{n}^{\gb,\go}(\gD S_i = 0) = \sumtwo{0\le k< i}{i<\ell\le n}
Z_{k}^{\gb,\go}
e^{\frac{\gb}{2}(\ell-k-\gO_{(k,\ell]}^2)}Z_{(\ell,n]}^{\gb,\go}.
\eeq
Comparing both expressions, we observe that
\beq
\gO_{(k,\ell]}^2 - \gO_{(k,i]}^2 - \gO_{(i,\ell]}^2 = 2 \sumtwo{k< u \le i}{i<v\le\ell} \go_u \go_v.
\eeq
Performing a small-$\beta$ expansion, and noting that $Z_{n}^{\gb,\go}$ converges to $2^{(n-1)\vee 0}$ as $\gb\to 0$, we obtain (the case $k=0$ or $\ell=n$ needs care): 
\beq
p_{i,n}^{\gb,\go} - \frac12 \sim \frac \gb 2 \sumtwo{0\le k< i}{i<\ell\le n}
 2^{k\vee 1 - \ell \wedge (n-1)} \sumtwo{k< u \le i}{i<v\le\ell} \go_u \go_v, \qquad \gb\to 0.
\eeq
We conclude by noting that
\beq
\sumtwo{0\le k< i}{i<\ell\le n} 2^{k\vee 1 - \ell \wedge (n-1)} \sumtwo{k< u \le i}{i<v\le\ell} \go_u \go_v = \sumtwo{0<u\le i}{i<v\le n}\go_u \go_v 2^{1+u-v}.
\eeq
\end{proof}
Let us make a few comments on Proposition~\ref{pr:DGH}. First, we observe that the series in the r.h.s.\ of \eqref{eq:DGH} converges, under mild assumptions on the charge sequence (say boundedness), as $n\to\infty$:
\beq
\lim_{n\to \infty} \sumtwo{0<u\le i}{i<v\le n}\go_u \go_v 2^{u-v} = \sum_{0<u\le i< v}\go_u \go_v 2^{u-v}.
\eeq
This shows that far-away charges have a decreasing influence on the state of a given monomer, at high temperatures. However, it is not clear whether the limits $\gb\to 0$ and $n\to\infty$ may be interchanged. Using the same proof as above but replacing the small-$\gb$ expansion by the inequality $e^x\ge 1+x$, one can interchange the limits and obtain:
\beq
\limsup_{\gb\to 0} \frac{1}{\gb}\Big(p_{i,n}^{\gb,\go} - \frac12\Big) \le   \sumtwo{0<u\le i}{i<v\le n}\go_u \go_v 2^{u-v},
\eeq
uniformly in $1\le i < n$. This is however not the desired direction, in view of Item (3) in Conjecture~\ref{conj:DGH}. Fortunately, DGH noticed the following:
\begin{proposition}[DGH~\cite{DeGrHi92}]
\label{pr:DGH2}
For every $1\le i< n$ and every charge sequence $\go$,
\beq
p_{i,n}^{\gb,\go} \ge \left(1+ \frac{e^{\gb/2}+e^{\gb}[1+e^{-\gb}]^{-1}}{1-e^\gb[1+e^{-\gb}]^{-1}}\right)^{-1},
\eeq
which is positive for every $\gb < \log(\frac{1+\sqrt{5}}{2})$.
\end{proposition}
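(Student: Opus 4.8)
The plan is to reduce the statement to an upper bound on the single ratio
\[
R \;=\; R_{i,n}^{\gb,\go} \;:=\; \frac{Z_n^{\gb,\go}(\gD S_i=0)}{Z_n^{\gb,\go}(\gD S_i=1)},
\]
since $p_{i,n}^{\gb,\go}=\bP_n^{\gb,\go}(\gD S_i=1)=(1+R)^{-1}$ is a decreasing function of $R$. Concretely I will show $R\le A/(1-B)$ with $A=e^{\gb/2}+B$ and $B=e^{\gb}[1+e^{-\gb}]^{-1}$; the final assertion then follows because $1-B>0$ is equivalent to $e^{2\gb}<e^{\gb}+1$, i.e.\ to $e^{\gb}<\tfrac{1+\sqrt5}{2}$, and whenever $1-B>0$ the quantity $A/(1-B)$ is finite, forcing $p_{i,n}^{\gb,\go}\ge(1+A/(1-B))^{-1}>0$.

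First I would rewrite $R$ in probabilistic form. By \eqref{eq=DGH-spin1a} the denominator factorises as $Z_i^{\gb,\go}Z_{(i,n]}^{\gb,\go}$. Comparing \eqref{eq=DGH-spin1} and \eqref{eq=DGH-spin0} through the identity $\gO_{(k,\ell]}^2-\gO_{(k,i]}^2-\gO_{(i,\ell]}^2=2\gO_{(k,i]}\gO_{(i,\ell]}$ --- equivalently, via the bijection that shifts the monomers $\{i+1,\dots,n\}$ by one unit, turning a configuration with $\gD S_i=0$ into one with $\gD S_i=1$ and changing the energy by exactly $\gO_{(k,i]}\gO_{(i,\ell]}$ --- one obtains
\[
R \;=\; \bE_n^{\gb,\go}\!\left[\,e^{-\gb\,\gO_{(k,i]}\gO_{(i,\ell]}}\,\middle|\,\gD S_i=1\right],
\]
where $k=\max([0,i)\cap\tau)$ and $\ell=\min((i,n]\cap\tau)$. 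Writing $a:=\gO_{(k,i]}$ and $b:=\gO_{(i,\ell]}$ for the charges of the blocks adjacent to the $i$-th site, and noting that conditionally on $\{\gD S_i=1\}$ these blocks occupy disjoint sites, so that $a$ and $b$ are independent with laws equal to the last/first block charge distributions of the two halves, this reads $R=\bE[e^{-\gb ab}]$ with $a$ and $b$ independent.

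The heart of the matter is that $e^{-\gb ab}\ge 1$ precisely when $a$ and $b$ have opposite signs, i.e.\ when the merged block is (nearly) neutral; on such configurations the factor is unbounded, so no termwise estimate can succeed and the smallness must be extracted from the normalisation (oppositely charged long blocks being rare). I would control this through a transfer recursion obtained by peeling the first monomer of the right half: letting $h^{(i)}(a)$ denote the expectation of $e^{-\gb ab}$ over the charge $b$ of the first renewal block of the sub-system on $(i,n]$, one checks that
\[
h^{(i)}(a)\;=\;e^{-\gb a\go_{i+1}}\,\frac{1+h^{(i+1)}(a+\go_{i+1})}{1+h^{(i+1)}(\go_{i+1})},\qquad h^{(n)}\equiv 0,
\]
to be combined with the monotonicity of $j\mapsto Z_{(j,n]}^{\gb,\go}$, the super-multiplicativity $Z_{(i,n]}^{\gb,\go}\ge Z_{(i,i+m]}^{\gb,\go}Z_{(i+m,n]}^{\gb,\go}$, and the mirror statements on the left. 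The upshot I aim for is that incorporating each further monomer into the straddling block costs, in the worst case over the charge it carries, a factor at most $B=e^{\gb}[1+e^{-\gb}]^{-1}$: the $e^{\gb}$ is the maximal one-step cross factor $e^{-\gb a\go}$ (attained at $a\go=-1$), while $[1+e^{-\gb}]^{-1}$ is the damping supplied by the recursion denominator, i.e.\ by the entropy of the shortest competing blocks. Summing the resulting geometric series and isolating the minimal contribution (the $l=r=1$ term, carrying the extra weight $e^{\gb/2}$) gives $R\le(e^{\gb/2}+B)\sum_{m\ge0}B^m=A/(1-B)$.

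The main obstacle is exactly this last estimate: making the per-monomer factor $B$ rigorous requires a \emph{tight}, uniform-in-$\go$ lower bound on the partition-function ratios entering the recursion denominators (equivalently, a sharp lower bound on the interval partition functions $Z_{(i,i+m]}^{\gb,\go}$), and it is the optimal such bound that pins the growth ratio at $B=e^{\gb}/(1+e^{-\gb})$ and hence the convergence threshold at $e^{2\gb}=e^{\gb}+1$. A cruder estimate --- for instance $-ab\le\tfrac12(a^2+b^2)$, which decouples the two halves into a product $\bE[e^{\gb a^2/2}]\,\bE[e^{\gb b^2/2}]$ --- does yield a finite answer, but only below a strictly smaller inverse temperature, so the cross term $e^{-\gb ab}$ must be kept intact throughout. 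Finally, the boundary cases $k=0$ and $\ell=n$ need separate bookkeeping, as already flagged in the proof of Proposition~\ref{pr:DGH}.
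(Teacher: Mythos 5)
Your reduction of the statement to an upper bound on $R=Z_n^{\gb,\go}(\gD S_i=0)/Z_n^{\gb,\go}(\gD S_i=1)$, and the rewriting $R=\bE_n^{\gb,\go}[e^{-\gb ab}\mid \gD S_i=1]$ with $a=\gO_{(k,i]}$, $b=\gO_{(i,\ell]}$ independent under the conditioned measure, are correct and coincide with the paper's starting point (the decompositions \eqref{eq=DGH-spin1a}--\eqref{eq=DGH-spin0}). But the proposal stops exactly where the proof has to begin: the bound $R\le A/(1-B)$ is never established. The transfer recursion for $h^{(i)}(a)$ is correctly stated, yet the claim that each additional monomer in the straddling block costs at most $B=e^{\gb}[1+e^{-\gb}]^{-1}$ is only announced as a goal, and you yourself flag the required uniform-in-$\go$ control of the recursion denominators as ``the main obstacle.'' As written, this is a plan, not a proof.

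The more substantive error is your dismissal of the decoupling $-ab\le\tfrac12(a^2+b^2)$ as giving ``only a strictly smaller inverse temperature.'' That decoupling --- equivalently, dropping $\gO_{(k,\ell]}^2$ in \eqref{eq=DGH-spin0} --- is precisely the paper's argument, and it does reach the stated threshold. The point you miss is that the resulting factors are expectations under the \emph{polymer} measure, $\bE[e^{\gb a^2/2}\mid \gD S_i=1]=\sum_{0\le k<i}Z_k^{\gb,\go}e^{\gb(i-k)/2}/Z_i^{\gb,\go}$, and these are controlled by Lemma~\ref{lem:DGH} ($Z_n^{\gb,\go}\ge Z_{n-1}^{\gb,\go}$ and $Z_n^{\gb,\go}\ge(1+e^{-\gb})Z_{n-2}^{\gb,\go}$, valid for \emph{every} charge sequence). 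That lemma is exactly the ``tight, uniform-in-$\go$ lower bound on interval partition functions'' you say is needed; it yields the per-step ratio $e^{\gb}[1+e^{-\gb}]^{-1}$ and hence the convergence condition $e^{2\gb}<e^{\gb}+1$, i.e.\ $\gb<\log\bigl(\tfrac{1+\sqrt5}{2}\bigr)$, with no loss. (The paper's route produces the product of the two decoupled factors, i.e.\ $R\le\bigl[A/(1-B)\bigr]^2$, which still gives positivity on the same temperature range.) So the missing idea in your write-up is not a sharper treatment of the cross term $e^{-\gb ab}$ but the elementary restriction argument behind Lemma~\ref{lem:DGH}; without it, or without completing your recursion estimate, the proof does not go through.
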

The proof relies on the following lemma:
\begin{lemma}[DGH~\cite{DeGrHi92}]
\label{lem:DGH}
 For every $n\ge 1$, $Z_{n}^{\gb,\go} \ge Z_{n-1}^{\gb,\go}$ and for every $n\ge 2$, $Z_{n}^{\gb,\go}\ge (1+e^{-\gb})Z_{n-2}^{\gb,\go}$.
\end{lemma}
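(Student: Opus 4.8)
The plan is to read both inequalities directly off the recursion relation for $Z_n^{\gb,\go}$ derived earlier (the one obtained by decomposing a path according to its last renewal point strictly before monomer $n$). The crucial structural observation is that in that recursion the summand indexed by $k=n-1$ equals $Z_{n-1}^{\gb,\go}\exp(\tfrac{\gb}{2}[1-\go_n^2])$, and since the charges are binary we have $\go_n^2=1$, so this summand is \emph{exactly} $Z_{n-1}^{\gb,\go}$. Every other summand is a product of a nonnegative partition function and a positive exponential, hence nonnegative. This is really the only idea in the proof.

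For the first inequality I would simply isolate the $k=n-1$ term and discard the rest, writing $Z_n^{\gb,\go}=Z_{n-1}^{\gb,\go}+\sum_{0\le k<n-1}(\text{nonnegative terms})\ge Z_{n-1}^{\gb,\go}$. For the second inequality, valid for $n\ge 2$, I would additionally retain the $k=n-2$ summand, which equals $Z_{n-2}^{\gb,\go}\exp(\tfrac{\gb}{2}[2-(\go_{n-1}+\go_n)^2])$. Using again that the charges are binary, $(\go_{n-1}+\go_n)^2\in\{0,4\}$, so $2-(\go_{n-1}+\go_n)^2\ge -2$ and this term is at least $e^{-\gb}Z_{n-2}^{\gb,\go}$. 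Keeping only the $k=n-1$ and $k=n-2$ contributions then gives $Z_n^{\gb,\go}\ge Z_{n-1}^{\gb,\go}+e^{-\gb}Z_{n-2}^{\gb,\go}$. Finally I would invoke the first inequality applied at index $n-1$, namely $Z_{n-1}^{\gb,\go}\ge Z_{n-2}^{\gb,\go}$, to conclude $Z_n^{\gb,\go}\ge(1+e^{-\gb})Z_{n-2}^{\gb,\go}$.

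There is essentially no hard step here: the whole argument is bookkeeping on the recursion, and the two genuinely useful bounds are the identities/estimates $\go_n^2=1$ and $(\go_{n-1}+\go_n)^2\le 4$, both special to the $\pm 1$ charge distribution. The only point requiring a little care is checking that the indices $k=n-1$ and $k=n-2$ are admissible, i.e.\ nonnegative and strictly less than $n$; this forces $n\ge 2$ for the second inequality, and one should note that in the boundary case $n=2$ these two indices are distinct (namely $k=1$ and $k=0$, with $Z_0^{\gb,\go}=Z_1^{\gb,\go}=1$), so that no term is double-counted.
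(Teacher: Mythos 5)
Your proof is correct and is essentially the paper's argument repackaged: keeping the $k=n-1$ (resp.\ $k=n-2$) term of the recursion amounts to the paper's restriction of $\cS_n$ to paths with $s_n-s_{n-1}=1$ (resp.\ $s_{n-1}-s_{n-2}=1$), since the recursion is itself obtained by decomposing over the last stretch step. The only cosmetic difference is in the second inequality, where the paper sums over both values of the final step to get $Z_{n-2}^{\gb,\go}(1+e^{-\gb\go_{n-1}\go_n})$ directly, whereas you keep the full $k=n-1$ term and invoke the first inequality at index $n-1$; both give the stated bound.
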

Let us stress that Lemma~\ref{lem:DGH} holds for {\it any} charge sequence $\go$.
\begin{proof}[Proof of Lemma~\ref{lem:DGH}]
Recall the expression of the partition function in~\eqref{eq:pf-DGH}. The proof follows by restricting $\cS_n$ to $\{s\in \cS_n \colon s_n - s_{n-1}= 1\}$ for the first inequality, and $\{s\in \cS_n \colon s_{n-1} - s_{n-2} = 1\}$
for the second one.
\end{proof}
\begin{proof}[Proof of Proposition~\ref{pr:DGH2}]
Starting from~\eqref{eq=DGH-spin0} and ignoring the term $\gO_{(k,\ell]}^2$ in the exponential, we get:
\beq
\label{eq:DGH2-aux1}
Z_{n}^{\gb,\go}(\gD S_i = 0) \le \sum_{0\le k< i}
Z_{k}^{\gb,\go} e^{\frac{\gb}{2}(i-k)}
\sum_{i<\ell\le n}
e^{\frac{\gb}{2}(\ell-i)}Z_{(\ell,n]}^{\gb,\go}.
\eeq
Using Lemma~\ref{lem:DGH} repeatedly, we have for every $0\le u \le (n-1)/2$,
\beq
\max(Z_{n-2u}^{\gb,\go},Z_{n-2u-1}^{\gb,\go}) \le (1+e^{-\gb})^{-u} Z_{n}^{\gb,\go}.
\eeq
Plugging this estimate into the sum over $k$ and splitting it between even and odd values of $i-k$ (say $i-k = 2u+1$ for some $u\ge0$ or $i-k = 2u$ for some $u\ge1$) we obtain:
\beq
\label{eq:DGH2-aux2}
\ba
\sum_{0\le k< i}
Z_{k}^{\gb,\go} e^{\frac{\gb}{2}(i-k)} &\le Z_i^{\gb,\go} \Big[\sum_{u\ge 0} (1+e^{-\gb})^{-u}e^{(2u+1)\gb/2} + \sum_{u\ge 1} (1+e^{-\gb})^{-u}e^{(2u)\gb/2}  \Big]\\
&= Z_i^{\gb,\go} \Big[\frac{e^{\gb/2}+e^{\gb}[1+e^{-\gb}]^{-1}}{1-e^\gb[1+e^{-\gb}]^{-1}} \Big],
\ea
\eeq
as soon as $\gb < \log(\frac{1+\sqrt{5}}{2})$, so that the sum over $u$ is finite.
Similarly,
\beq
\label{eq:DGH2-aux3}
\sum_{i<\ell\le n}
e^{\frac{\gb}{2}(\ell-i)}Z_{(\ell,n]}^{\gb,\go} \le Z_{(i,n]}^{\gb,\go} \Big[\frac{e^{\gb/2}+e^{\gb}[1+e^{-\gb}]^{-1}}{1-e^\gb[1+e^{-\gb}]^{-1}}\Big].
\eeq
Combining~\eqref{eq=DGH-spin1a}, \eqref{eq:DGH2-aux1}, \eqref{eq:DGH2-aux2} and \eqref{eq:DGH2-aux3} leads to
\beq
Z_{n}^{\gb,\go}(\gD S_i = 0) \le \Big[\frac{e^{\gb/2}+e^{\gb}[1+e^{-\gb}]^{-1}}{1-e^\gb[1+e^{-\gb}]^{-1}} \Big]^2 \times Z_{n}^{\gb,\go}(\gD S_i = 1),
\eeq
which completes the proof.
\end{proof}
\begin{remark}
Although we follow the same line of proof, the lower bound in Proposition~\ref{pr:DGH2} is (slightly) better than the one stated in~\cite{DeGrHi92}. However, none of these bounds is close to optimal, as the limit $\gb\to 0$ shows.
\end{remark}
\subsubsection{The low temperature regime}
\label{sec:low-temp-reg}
Although the existence of a low-temperature regime remained at a heuristic level for the random binary charges, DGH~\cite{DeGrHi92} shortly argued that for the special charge distribution defined in~\eqref{eq:two-block-charges} (a.k.a.\ di-block polymer), the chain collapses at inverse temperatures $\gb>2\gb_0$, where
\beq
\label{eq:def-beta0}
\gb_0 := \inf\Big\{\gb >0\colon S(\gb):=\sum_{t\in \bbN} e^{-\gb t^2} < 1\Big\},
\eeq
meaning that ``{\it almost all of the monomers are on one site, as the cancellation of positive and negative charges minimizes the energy}''. In this section, we provide the necessary details of this argument so as to obtain a more quantitative statement. To this purpose, let us define
\beq
\ba
\sfi(n) &= \max([0,n)\cap \tau)\\
\sfj(n) &= \min([n,2n]\cap \tau) - n,
\ea
\eeq
with the convention $\sfj(n)= n$ if $[n,2n]\cap \tau=\emptyset$.
\begin{proposition}
\label{pr:low-temp-two-block-dist}
Assume that the charge sequence is as in~\eqref{eq:two-block-charges} and that $\gb>\gb_0$. Then, there exists $C(\gb)\in(0,1)$ such that for every $0\le i <n$ and $0\le j \le n$,
\beq
\bar \bP_{2n}^{\gb,\go}(\sfi(n)=i) \le \frac{C(\gb)^i}{1-S(\gb)},\qquad 
\bar \bP_{2n}^{\gb,\go}(\sfj(n)=j) \le \frac{C(\gb)^{n-j}}{1-S(\gb)}
\eeq
and for every $M\ge 1$,
\beq
\bar \bP_{2n}^{\gb,\go}(|\sfi(n)+\sfj(n)-n|\ge M) \le \frac{e^{-\gb M^2}}{[1-S(\gb)]^2}.
\eeq
\end{proposition}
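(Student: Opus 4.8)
The plan is to write the joint law of $(\sfi(n),\sfj(n))$ under $\bar\bP_{2n}^{\gb,\go}$ in closed form by conditioning on the renewal block that straddles the interface $n$, and then to read off all three estimates from two elementary facts about an auxiliary renewal sequence. First I would use the renewal representation \eqref{eq:bar-pf2}. Writing $\tau$ for the renewal set, the event $\{\sfi(n)=i,\ \sfj(n)=j\}$ fixes a renewal at $i$ and the next one at $n+j$, so the block partition decouples into (a) the region $(0,i]$, carrying only $+1$ charges, (b) the straddling block $(i,n+j]$, whose net (uncancelled) charge is $n-i-j$, and (c) the region $(n+j,2n]$, carrying only $-1$ charges. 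Since equal-sign charges add up, an all-positive (resp.\ all-negative) block of length $\ell$ has weight $2^{-\ell}e^{-\gb\ell^2}$, and summing over the internal renewals of region (a) (resp.\ (c)) produces $2^{-i}r_i$ (resp.\ $2^{-(n-j)}r_{n-j}$), where
\[
r_m \ :=\ \sumtwo{\ell_1+\cdots+\ell_p=m}{\ell_1,\dots,\ell_p\ge1}\ \prod_{t=1}^{p}e^{-\gb\ell_t^2},\qquad r_0:=1,
\]
the sum running over all compositions of $m$, is the renewal function attached to the inter-arrival weight $e^{-\gb\ell^2}$. The straddling block contributes $2^{-(n+j-i)}e^{-\gb(n-i-j)^2}$, and the boundary block terminating at $2n$ receives the same weight $2^{-\ell}e^{-\gb\ell^2}$ as an interior block thanks to the memorylessness of the geometric law. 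Multiplying the three factors, all powers of $2$ collapse into a global $2^{-2n}$ because the total length is conserved, and this prefactor cancels in the polymer measure, giving
\[
\bar\bP_{2n}^{\gb,\go}(\sfi(n)=i,\ \sfj(n)=j)\ =\ \frac{r_i\,e^{-\gb(n-i-j)^2}\,r_{n-j}}{\displaystyle\sum_{i',j'}r_{i'}\,e^{-\gb(n-i'-j')^2}\,r_{n-j'}}.
\]

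The two inputs I would establish next are: (i) $\sum_{m\ge0}r_m=(1-S(\gb))^{-1}$, which is the identity $\sum_m r_m z^m=(1-V(z))^{-1}$ at $z=1$, where $V(z)=\sum_{\ell\ge1}e^{-\gb\ell^2}z^\ell$ satisfies $V(1)=S(\gb)<1$ for $\gb>\gb_0$; and (ii) the geometric bound $r_m\le C(\gb)^m$ for all $m$, with $C(\gb):=1/z_*\in(0,1)$, where $z_*>1$ is the unique root of $V(z_*)=1$ (existence and uniqueness follow since $V$ is entire, increasing on $(0,\infty)$, with $V(1)<1$ and $V(\infty)=\infty$). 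Estimate (ii) is a one-line induction from the renewal equation $r_m=\sum_{\ell=1}^m e^{-\gb\ell^2}r_{m-\ell}$: using $r_{m-\ell}\le C^{m-\ell}$, which holds for every $1\le\ell\le m$ because $r_0=1=C^0$, yields $r_m\le C^m\sum_{\ell\ge1}e^{-\gb\ell^2}C^{-\ell}=C^m V(1/C)=C^m$.

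With these in hand the three estimates drop out. For the last one, on $\{|\sfi(n)+\sfj(n)-n|\ge M\}$ the straddling charge satisfies $(n-i-j)^2\ge M^2$, so bounding $e^{-\gb(n-i-j)^2}\le e^{-\gb M^2}$ in the numerator and then summing $r_i$ and $r_{n-j}$ without constraint gives $e^{-\gb M^2}\big(\sum_m r_m\big)^2=e^{-\gb M^2}(1-S(\gb))^{-2}$, while the denominator is at least its $(i,j)=(0,n)$ term $r_0^2=1$. For the first estimate I would marginalise over $j$ (equivalently over $k:=n-j$): the denominator is at least its $i'=0$ contribution $\sum_k e^{-\gb k^2}r_k\ge r_0=1$, the numerator equals $r_i\sum_k e^{-\gb(k-i)^2}r_k\le r_i\sum_k r_k=r_i(1-S(\gb))^{-1}$, and combined with (ii) this gives precisely $C(\gb)^i(1-S(\gb))^{-1}$. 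The second estimate is identical after interchanging the two one-sided regions.

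The main obstacle is the exact bookkeeping behind the factorised formula: one must verify that conditioning on the straddling block genuinely decouples the two one-sided regions, that the block ending at $2n$ carries the same weight as an interior block (this is where the geometric/memoryless structure enters), and that the straddling block's net charge is $n-\sfi(n)-\sfj(n)$ up to the boundary convention for $\go_n$, which only affects the constant. Once the powers of $2$ are seen to cancel into the $\go,i,j$-independent factor $2^{-2n}$, everything reduces to the elementary renewal estimate above, with $C(\gb)$ taken equal to $1/z_*$.
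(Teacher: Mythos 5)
Your proof is correct and follows essentially the same route as the paper's: you decompose $\bar Z_{2n}^{\gb,\go}(\sfi(n)=i,\sfj(n)=j)$ into the two homogeneous one-sided regions and the straddling block, lower-bound the denominator by the fully collapsed term $(i,j)=(0,n)$, and control the one-sided pieces by a geometric bound together with $\sum_{m\ge 0} r_m=(1-S(\gb))^{-1}$; note that your $r_m$ is exactly the paper's $2^{m-1}\bar Z_m^{\gb,+}$, and your observation that the incomplete final block carries the same weight as an interior one is precisely the paper's time-reversal step. The one substantive difference is how the geometric decay is obtained: the paper invokes Lemma~\ref{lem:wsaw} (supermultiplicativity/Fekete plus the generating-function identity \eqref{eq:grand-can-pf-wsaw}) to get $2^{m-1}\bar Z_m^{\gb,+}\le C(\gb)^m$ with $C(\gb)=2e^{-\bar F_+(\gb)}$, whereas you run a direct induction on the renewal equation with $C(\gb)=1/z_*$ where $V(z_*)=1$. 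These are the same constant, since \eqref{eq:grand-can-pf-wsaw} gives $\bar F_+(\gb)=\log(2z_*)$, and your induction is the more elementary and self-contained of the two, at the cost of not producing the free-energy interpretation of $C(\gb)$ that the paper highlights. The only loose end, which you flag yourself, is the net charge of the straddling block ($n-i-j-2$ under the literal indexing of \eqref{eq:two-block-charges} versus $n-i-j$); your treatment coincides with the paper's own display, so nothing in the stated bounds is affected.
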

\begin{corollary}
Under the same assumptions as Proposition~\ref{pr:low-temp-two-block-dist}, we have
\beq
p_{i,2n}^{2\gb,\go} \le \frac{C(\gb)^{i \wedge (2n-i)}}{[1-S(\gb)][1-C(\gb)]}, \qquad 1\le i\le 2n.
\eeq
\end{corollary}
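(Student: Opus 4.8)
The plan is to deduce the corollary directly from the two one-point tail bounds in Proposition~\ref{pr:low-temp-two-block-dist}, after rewriting the quantity of interest as a renewal probability. By the equivalence of conventions~\eqref{eq:diff-scaling-polymer-meas} one has $\bP_{2n}^{2\gb,\go} = \bar\bP_{2n}^{\gb,\go}$, so that
\[
p_{i,2n}^{2\gb,\go} = \bE_{2n}^{2\gb,\go}(\gD S_i) = \bar\bP_{2n}^{\gb,\go}(\gD S_i = 1) = \bar\bP_{2n}^{\gb,\go}(i \in \tau),
\]
where the second equality uses $\gD S_i \in \{0,1\}$ and the last one is the very definition of the renewal set $\tau = \{k\ge 0\colon \gD S_k = 1\}$. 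Everything then reduces to bounding the probability that $i$ is a renewal point, and I would split on the position of $i$ relative to the midpoint $n$, using that $\sfi(n)$ governs renewals in $[0,n)$ while $\sfj(n)$ governs renewals in $[n,2n]$.

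In the first regime, if $1\le i<n$ and $i\in\tau$, then $i\in[0,n)\cap\tau$, hence $\sfi(n)=\max([0,n)\cap\tau)\ge i$. Summing the first bound of the proposition over the geometric tail gives
\[
\bar\bP_{2n}^{\gb,\go}(i\in\tau) \le \sum_{i'=i}^{n-1}\bar\bP_{2n}^{\gb,\go}(\sfi(n)=i') \le \frac{1}{1-S(\gb)}\sum_{i'\ge i}C(\gb)^{i'} = \frac{C(\gb)^{i}}{[1-S(\gb)][1-C(\gb)]},
\]
and since $i<n$ forces $i\wedge(2n-i)=i$, this matches the asserted bound. Symmetrically, if $n\le i\le 2n$ and $i\in\tau$, then $i\in[n,2n]\cap\tau$, whence $\min([n,2n]\cap\tau)\le i$, i.e.\ $\sfj(n)\le i-n$. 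Summing the second bound over $j=0,\dots,i-n$ and noting that the terms $C(\gb)^{n-j}$ are largest at $j=i-n$ yields
\[
\bar\bP_{2n}^{\gb,\go}(i\in\tau) \le \sum_{j=0}^{i-n}\bar\bP_{2n}^{\gb,\go}(\sfj(n)=j) \le \frac{1}{1-S(\gb)}\sum_{j=0}^{i-n}C(\gb)^{n-j} \le \frac{C(\gb)^{2n-i}}{[1-S(\gb)][1-C(\gb)]}.
\]
Here $i\ge n$ gives $i\wedge(2n-i)=2n-i$, again matching the claim, and the boundary value $i=n$ is absorbed into this second case.

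I do not expect any genuine obstacle: once Proposition~\ref{pr:low-temp-two-block-dist} is in hand, the corollary is a short computation. The only points demanding a little care are the passage from the expectation of $\gD S_i$ to the probability $\bar\bP_{2n}^{\gb,\go}(i\in\tau)$ through the change of inverse-temperature scaling, and the bookkeeping of which half $i$ lies in, so that the exponent $i\wedge(2n-i)$ emerges correctly (in particular assigning $i=n$ to the second regime). The geometric summations are routine and merely account for the extra factor $[1-C(\gb)]^{-1}$ relative to the one-point bounds; note also that the third estimate of the proposition, on $|\sfi(n)+\sfj(n)-n|$, is not needed here.
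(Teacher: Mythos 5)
Your proof is correct; the paper states this corollary without proof, and your argument — rewriting $p_{i,2n}^{2\gb,\go}$ as $\bar\bP_{2n}^{\gb,\go}(i\in\tau)$ via~\eqref{eq:diff-scaling-polymer-meas}, then summing the one-point bounds on $\sfi(n)$ (resp.\ $\sfj(n)$) geometrically according to whether $i<n$ or $i\ge n$ — is exactly the intended route, as the extra factor $[1-C(\gb)]^{-1}$ in the statement already signals. The only cosmetic point is that $p_{i,2n}$ is only defined for $i<2n$ per~\eqref{eq:def-p-i-n}, a slight sloppiness in the corollary's range that is the paper's, not yours.
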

By~\eqref{eq:diff-scaling-polymer-meas}, the threshold value $\gb_0$ in Proposition~\ref{pr:low-temp-two-block-dist} indeed corresponds to $2\gb_0$ in~\cite{DeGrHi92}.
We will see during the proof that the rate of decay $C(\gb)$ can be expressed in terms of the free energy of the \emph{weakly self-avoiding walk} (defined slightly below).
Proposition~\ref{pr:low-temp-two-block-dist} shows the presence of a low-temperature regime (for this particular charge sequence) where (i) the polymer is very close to its \emph{fully collapsed} state (corresponding to $\sfi(n) = 0$ and $\sfj(n)=n$) and (ii) the cumulated charge in the folded piece of polymer around monomer $n$ (equal to $\sfi(n)+\sfj(n)-n$) has a Gaussian tail. Before starting the proof, let us introduce some additional notation. We write
\beq
\label{eq:wsaw}
\bar Z_{n}^{\gb,+} := \bE \Big[\exp\Big(-\gb \sum_{1\le i, j\le n} \ind_{\{S_i=S_j\}}\Big)\Big], \qquad n\ge 1,
\eeq
that is the partition function displayed in~\eqref{eq:part-fct-conv2} when all the charges equal one (a.k.a.\ the partition function of the \emph{weakly self-avoiding walk}). It can be rewritten as
\beq
\label{eq:wsaw2}
\bar Z_{n}^{\gb,+} = \bE \Big[\exp\Big(-\gb \sum_{x\ge 1} \ell_n(x)^2\Big)\Big], \qquad \text{with}\quad
\ell_n(x):= \{1\le k \le n\colon S_k =x\},
\eeq
with the convention $\bar Z_{0}^{\gb,+}=2$, in order to be consistent with Section~\ref{sec:alt-conv}. We shall use the following lemma, the proof of which is deferred at the end of this section. Recall the definition of $\gb_0$ in~\eqref{eq:def-beta0}.
\begin{lemma}
\label{lem:wsaw}
The following limit exists
\beq
- \bar F_+(\gb) := \lim_{n\to\infty} \frac1n \log \bar Z_{n}^{\gb,+} = \sup_{n\to \infty} \frac1n \log(\tfrac{1}{2}\bar Z_{n}^{\gb,+}),
\eeq
and $\bar F_+(\gb) > \log 2$ if and only if $\gb > \gb_0$.
\end{lemma}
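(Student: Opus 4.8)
The plan is to exploit the renewal structure of the directed walk, treating the two assertions separately: (i) existence of the limit together with the supremum representation, and (ii) the dichotomy $\bar F_+(\gb) > \log 2 \iff \gb > \gb_0$.

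For (i) I would show that $n\mapsto \log(\tfrac12\bar Z_n^{\gb,+})$ is superadditive and invoke Fekete's lemma. The key is a factorization obtained by restricting the expectation defining $\bar Z_{m+n}^{\gb,+}$ to the event $\{\gD S_m = 1\}$ of a jump between times $m$ and $m+1$. On that event the walk occupies level $S_m$ only during $\{1,\dots,m\}$ and level $S_m+1$ only during $\{m+1,\dots,m+n\}$, so the occupation profile splits as $\sum_{x\ge1}\ell_{m+n}(x)^2 = \sum_{x\ge1}\ell_m(x)^2 + \sum_{x\ge1}\widetilde\ell_n(x)^2$, where $\widetilde\ell_n$ is the spatially shifted local time of the post-$m$ walk. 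By the i.i.d.\ nature of the increments the two contributions are independent and independent of $\{\gD S_m=1\}$, so that
\[
\bar Z_{m+n}^{\gb,+}\ \ge\ \bP(\gD S_m=1)\,\bar Z_m^{\gb,+}\,\bar Z_n^{\gb,+}\ =\ \tfrac12\,\bar Z_m^{\gb,+}\,\bar Z_n^{\gb,+},
\]
i.e.\ $\tfrac12\bar Z_{m+n}^{\gb,+}\ge(\tfrac12\bar Z_m^{\gb,+})(\tfrac12\bar Z_n^{\gb,+})$. Fekete then yields $\lim_n\frac1n\log(\tfrac12\bar Z_n^{\gb,+})=\sup_n\frac1n\log(\tfrac12\bar Z_n^{\gb,+})$; finiteness comes from $\bar Z_n^{\gb,+}\le e^{-\gb n}\le1$ for $n\ge1$ (since $\sum_x\ell_n(x)^2\ge\sum_x\ell_n(x)=n$), and the harmless factor $\tfrac12$ shows this equals $\lim_n\frac1n\log\bar Z_n^{\gb,+}=:-\bar F_+(\gb)$.

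For (ii) I would identify the growth rate explicitly through the recursion relation displayed earlier, specialized to unit charges, which reads $\bar Z_n^{\gb,+}=\sum_{m=1}^n \bar Z_{n-m}^{\gb,+}K(m)$ with $K(m):=2^{-m}e^{-\gb m^2}$ and $\bar Z_0^{\gb,+}=2$. This is a renewal equation; introducing $W(z):=\sum_{n\ge0}\bar Z_n^{\gb,+}z^n$ and $\cK(z):=\sum_{m\ge1}K(m)z^m$, it becomes $W(z)=2/(1-\cK(z))$. Since $e^{-\gb m^2}$ decays superexponentially, $\cK$ is entire and strictly increases on $(0,\infty)$ from $0$ to $\infty$, so $\cK(z^*)=1$ has a unique positive root $z^*$. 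As the coefficients $\bar Z_n^{\gb,+}$ are positive, Pringsheim's theorem together with the bound $|\cK(z)|\le\cK(|z|)<1$ for $|z|<z^*$ identifies $z^*$ as the radius of convergence of $W$; hence $e^{-\bar F_+(\gb)}=\lim_n(\bar Z_n^{\gb,+})^{1/n}=1/z^*$, that is $\bar F_+(\gb)=\log z^*$. Monotonicity of $\cK$ then gives $\bar F_+(\gb)>\log2\iff z^*>2\iff\cK(2)<1$, and the computation $\cK(2)=\sum_{m\ge1}2^{-m}e^{-\gb m^2}2^m=\sum_{m\ge1}e^{-\gb m^2}=S(\gb)$ turns this into $S(\gb)<1$, which by the definition of $\gb_0$ and the strict monotonicity of $\gb\mapsto S(\gb)$ is exactly $\gb>\gb_0$.

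The main obstacle I anticipate lies in the bookkeeping of the factorization step of (i): one must verify that conditioning on a jump at time $m$ genuinely decouples the occupation profiles before and after $m$, and that the two pieces are distributed exactly as independent length-$m$ and length-$n$ walks, so that the inequality carries the single clean prefactor $\tfrac12$. A secondary, more routine point is passing from the generating-function identity to the exponential rate (equivalently, applying the renewal theorem), for which the superexponential decay of $K(m)$ and the positivity of the coefficients make the Pringsheim/analytic-continuation argument straightforward.
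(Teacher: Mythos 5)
Your proposal is correct and takes essentially the same route as the paper: part (i) is exactly the paper's observation that $\bar Z_{n}^{\gb,+}(\gD S_n=1)=\tfrac12 \bar Z_{n}^{\gb,+}$ is supermultiplicative (your restriction to $\{\gD S_m=1\}$ is that same decomposition), and part (ii) rests on the same renewal identity, since your $\cK(z)$ evaluated at $z=e^f$ is precisely the inner sum in~\eqref{eq:grand-can-pf-wsaw} and $\cK(2)=S(\gb)$. The only cosmetic difference is that you extract the growth rate via Pringsheim's theorem applied to $W(z)=2/(1-\cK(z))$ rather than by reading off the abscissa of convergence of the constrained grand-canonical sum, which has the minor benefit of identifying $\bar F_+(\gb)=\log z^*$ exactly.
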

\begin{proof}[Proof of Proposition~\ref{pr:low-temp-two-block-dist}]
We first observe that
for every $0\le i < n$ and $0\le j\le n$,
\beq
\bar Z_{2n}^{\gb,\go}(\sfi(n)=i, \sfj(n)=j)
= \bar Z_{i}^{\gb,+}\times [2^{i-j-n-1}e^{-\gb(i+j-n)^2}] \times Z_{n-j}^{\gb,+},
\eeq
by the Markov property at times $i$ and $j$, and ``reversing time" on the interval $(j,n]$. The \emph{fully collapsed} state, corresponding to $\sfi(n)=0$ and $\sfj(n)=n$, yields the following lower bound:
\beq
\bar Z_{2n}^{\gb,\go} \ge \bar Z_{2n}^{\gb,\go}(\sfi(n)=0, \sfj(n)=n) =2^{1-2n}.
\eeq
(We remind the reader that the first step of the random walk is fixed to the value one). Therefore,
\beq
\bar \bP_{2n}^{\gb,\go}(\sfi(n)=i, \sfj(n)=j) \le
[2^{i-1}\bar Z_{i}^{\gb,+}]e^{-\gb(i+j-n)^2}[2^{n-j-1}\bar Z_{n-j}^{\gb,+}].
\eeq
By Lemma~\ref{lem:wsaw}, we have, since $\gb>\gb_0$,
\beq
2^{k-1}\bar Z_{k}^{\gb,+} \le C(\gb)^k, \qquad \text{with} \quad C(\gb):= 2e^{-\bar F_+(\gb)}\in(0,1),
\eeq
and (using~\eqref{eq:grand-can-pf-wsaw} below and the definition of $S(\gb)$ in~\eqref{eq:def-beta0})
\beq
\sum_{k\ge 0} 2^{k-1}\bar Z_{k}^{\gb,+} = \frac{1}{1-S(\gb)},
\eeq
which completes the proof.
\end{proof}
\begin{proof}[Proof of Lemma~\ref{lem:wsaw}]
The first part of the lemma is rather standard and follows from Fekete's lemma, once we notice that the sequence defined by
\beq
\bar Z_{n}^{\gb,+}(n\in\tau) = \bar Z_{n}^{\gb,+}(\gD S_n = 1) = (1/2) \bar Z_{n}^{\gb,+}, \qquad n\ge 1,
\eeq
is super-multiplicative. The second part of the lemma follows from
\beq
\ba
\bar Z_{n}^{\gb,+}(n\in\tau) &= \bE\Big[\exp\Big(-\gb \sum_{x\ge 1}\ell_n(x)^2\Big)\ind_{\{n\in\tau\}} \Big]\\
&= \sum_{k\ge1}\sum_{0=:t_0<t_1<\ldots<t_k=n} \prod_{1\le i \le k} e^{-\gb(t_i - t_{i-1})^2}2^{-(t_i-t_{i-1})}\\
\ea
\eeq
which implies that
\beq
\label{eq:grand-can-pf-wsaw}
\sum_{n\ge 1} e^{fn} \bar Z_{n}^{\gb,+}(n\in\tau) = \sum_{k\ge 1} \Big( \sum_{t\ge 1} e^{(f-\log 2)t-\gb t^2} \Big)^k, \qquad f\in\bbR.
\eeq
\end{proof}
\subsection{Existence and variational representation of the free energy}
\label{sec:var-rep-FE}
In this section we focus on the quenched free energy of the model, using one or the other definition, thanks to~\eqref{eq:equ-fe}. We start with the issue of existence and self-averaging in Section~\ref{sec:exist-sa}, along with a few preliminary properties of the free energy as a function of the inverse temperature. In Section~\ref{sec:LDP} we apply the so-called \emph{Large Deviation Principle (LDP) for words drawn in a random letter sequence}~ \cite{B08,BGDH10,BGDH10-add} to derive a variational representation of the free energy. We will use this representation in Section~\ref{sec:high-temp-FE} and obtain new estimates on the free energy.
\subsubsection{Existence and self-averaging}
\label{sec:exist-sa}
Similarly to how we proved Lemma~\ref{lem:wsaw}, we define a {\it constrained} version of the quenched partition function by:
\beq
\label{eq:bar-pf-constr}
\bar Z^{\gb,\go}_{n,c} := \bar Z^{\gb,\go}_{n}(n\in \tau) = \bar Z^{\gb,\go}_{n}(\gD S_n = 1).
\eeq
Since the value of $\gD S_n$ does not affect the Hamiltonian, we readily get
\beq
\bar Z^{\gb,\go}_{n,c} = \frac12 \bar Z^{\gb,\go}_{n}.
\eeq
We prove the following:
\begin{theorem}
\label{thm:que-fe}
The quenched free energy exists as the {\it non-random} limit 
\beq
\bar F_{\rm que}(\gb) := \lim_{n\to\infty} \frac1n \log \bar Z^{\gb,\go}_n = \lim_{n\to\infty} \frac1n \log \bar Z^{\gb,\go}_{n,c} \le 0,
\eeq
which holds $\bbP$-almost surely and in $L^1(\bbP)$. Moreover,
\beq
\bar F_{\rm que}(\gb) = \sup_{n\ge 1}\frac1n  \bbE  \log \bar Z^{\gb,\go}_{n,c}.
\eeq
\end{theorem}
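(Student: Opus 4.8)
The plan is to reduce the whole statement to Kingman's subadditive ergodic theorem applied to the constrained partition function $\bar Z^{\gb,\go}_{n,c}$ introduced in~\eqref{eq:bar-pf-constr}. First I would record the renewal expansion already used in the proof of Lemma~\ref{lem:wsaw}: constraining a renewal at $n$ and expanding the renewal representation~\eqref{eq:bar-pf2} gives
\begin{equation}
\bar Z^{\gb,\go}_{n,c} = \sum_{k\ge 1}\ \sum_{0=t_0<t_1<\cdots<t_k=n}\ \prod_{i=1}^k 2^{-(t_i-t_{i-1})}\, e^{-\gb\, \gO_{(t_{i-1},t_i]}^2},
\end{equation}
which is the charged analogue of the computation in Lemma~\ref{lem:wsaw}, the only change being that the block length appearing in the Gaussian factor is replaced by the block charge $\gO_{(t_{i-1},t_i]}$.

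The central step is \emph{super-multiplicativity}. Fix $m,n\ge 1$ and, among all the block decompositions of $\{0,\dots,m+n\}$ appearing above, retain only those using $m$ as one of the endpoints $t_j=m$. Since the walk is directed, forcing a renewal at $m$ places the monomers $1,\dots,m$ on sites strictly to the left of the monomers $m+1,\dots,m+n$, so no block straddles $m$ and the block charges split into those built from $\go_1,\dots,\go_m$ and those built from $\go_{m+1},\dots,\go_{m+n}=(\theta^m\go)_1,\dots,(\theta^m\go)_n$, while the weights $2^{-(t_i-t_{i-1})}$ factorise exactly. Keeping only a subfamily of admissible decompositions then yields
\begin{equation}
\bar Z^{\gb,\go}_{m+n,c}\ \ge\ \bar Z^{\gb,\go}_{m,c}\;\bar Z^{\gb,\theta^m\go}_{n,c},
\end{equation}
equivalently, $Y_n:=-\log \bar Z^{\gb,\go}_{n,c}$ satisfies $Y_{m+n}\le Y_m+Y_n\circ\theta^m$.

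Since the charges are i.i.d., the shift $\theta$ is measure-preserving and ergodic, so $(Y_n)$ is a stationary subadditive ergodic process. The integrability hypotheses of Kingman's theorem are immediate: from $\bar Z^{\gb,\go}_{n,c}=\tfrac12\bar Z^{\gb,\go}_n\le \tfrac12$ (using $\sum_{x}\gO_n(x)^2\ge 0$ in~\eqref{eq:bar-pf}) we get $Y_n\ge \log 2>0$, hence $\inf_n \tfrac1n\bbE[Y_n]\ge 0>-\infty$, while $Y_1=\log 2+\gb$ is integrable. Kingman's theorem then gives that $\tfrac1n Y_n$ converges $\bbP$-a.s.\ and in $L^1(\bbP)$ to $\inf_{n\ge1}\tfrac1n\bbE[Y_n]$, the limit being non-random by ergodicity. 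Rewriting this in terms of the partition function yields $\tfrac1n\log\bar Z^{\gb,\go}_{n,c}\to \sup_{n\ge1}\tfrac1n\bbE[\log\bar Z^{\gb,\go}_{n,c}]=:\bar F_{\rm que}(\gb)$, which is exactly the last displayed identity of the theorem. The unconstrained limit coincides because $\bar Z^{\gb,\go}_{n,c}=\tfrac12\bar Z^{\gb,\go}_n$ contributes only $-\tfrac1n\log 2\to0$, and the bound $\bar F_{\rm que}(\gb)\le 0$ follows once more from $\bar Z^{\gb,\go}_n\le 1$.

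The step requiring the most care is the super-multiplicativity: one must verify that forcing a block endpoint at $m$ genuinely decouples the charge interactions across the two halves (this is where directedness is essential, since it guarantees disjoint ranges of occupied sites) and that the combinatorial weights factor precisely. Once this is secured, the verification of Kingman's hypotheses and the passage between the constrained and free energies are routine.
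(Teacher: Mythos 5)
Your proof is correct and follows essentially the same route as the paper: establish the super-multiplicativity $\bar Z^{\gb,\go}_{m+n,c}\ge \bar Z^{\gb,\go}_{m,c}\,\bar Z^{\gb,\theta^m\go}_{n,c}$ by forcing a renewal at $m$, then invoke Kingman's (super)additive ergodic theorem for the a.s.\ and $L^1$ convergence to the non-random limit $\sup_{n}\frac1n\bbE\log\bar Z^{\gb,\go}_{n,c}$. The only difference is that you spell out the decoupling and the integrability checks, which the paper leaves implicit.
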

\begin{proof}[Proof of Theorem~\ref{thm:que-fe}]
Notice that $\bar Z^{\gb,\go}_{n+m,c} \ge \bar Z^{\gb,\go}_{n,c} \bar Z^{\gb,\theta^n\go}_{m,c}$, apply the logarithm function, and conclude via Kingman's superadditive ergodic theorem. 
\end{proof}
Let us list some properties of the quenched free energy as a function of the inverse temperature (see Figures~\ref{fig:QuenchedFE_N1000} and~\ref{fig:Averaged_QuenchedFE_N1000_Nsample100}):
\begin{proposition}
\label{pr:prop-fe}
We have the following:
\begin{enumerate}
\item $\gb\in [0,\infty) \mapsto \bar F_{\rm que}(\gb)$ is a convex non-increasing function.
\item $\bar F_{\rm que}(\gb) \ge (-\gb) \vee (-\log 2)$.
\end{enumerate}
\end{proposition}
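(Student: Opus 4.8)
The plan is to prove each of the two items separately, both relying on the variational characterization $\bar F_{\rm que}(\gb) = \sup_{n\ge 1}\frac1n \bbE \log \bar Z^{\gb,\go}_{n,c}$ from Theorem~\ref{thm:que-fe} together with elementary manipulations of the partition function.

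\textbf{Item (1): convexity and monotonicity.} For convexity, the natural approach is to exhibit $\bar F_{\rm que}$ as a supremum of convex functions. For each fixed $n$ and each fixed charge realization $\go$, I would show that $\gb \mapsto \log \bar Z^{\gb,\go}_{n,c}$ is convex in $\gb$. This is a standard fact for log-partition functions: writing $\bar Z^{\gb,\go}_{n,c} = \bE[\exp(-\gb V) \ind_{\{n\in\tau\}}]$ with $V = V(S,\go) = \sum_{x\ge 1}\gO_n(x)^2 \ge 0$ the nonnegative energy, the second derivative in $\gb$ equals the variance of $V$ under the tilted (constrained) polymer measure, which is nonnegative. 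Hence each $\gb \mapsto \frac1n \bbE \log \bar Z^{\gb,\go}_{n,c}$ is convex, and a supremum of convex functions is convex, giving convexity of $\bar F_{\rm que}$. For monotonicity (non-increasing), since $V\ge 0$, the map $\gb\mapsto \bar Z^{\gb,\go}_{n,c}$ is non-increasing for each $\go$ and each $n$; thus $\frac1n \bbE \log \bar Z^{\gb,\go}_{n,c}$ is non-increasing in $\gb$, and taking the supremum over $n$ preserves this (a supremum of non-increasing functions is non-increasing). Alternatively, convexity plus the bound $\bar F_{\rm que}\le 0$ from Theorem~\ref{thm:que-fe} together with $\bar F_{\rm que}(0)=0$ would also force monotonicity, but the direct argument via $V\ge 0$ is cleaner.

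\textbf{Item (2): the lower bound.} The bound $\bar F_{\rm que}(\gb)\ge (-\gb)\vee(-\log 2)$ amounts to two separate lower bounds. For $\bar F_{\rm que}(\gb)\ge -\log 2$: one can drop all but a single term in the renewal expansion, or more simply restrict the random walk to the fully stretched path $S_i = i$ (all increments equal one), for which $\ell_n(x)\le 1$ so the binary energy vanishes termwise in the relevant convention; tracking constants through $\bar Z^{\gb,\go}_{n,c}\ge 2^{-n}\cdot(\text{const})$ gives $\frac1n\log\bar Z^{\gb,\go}_{n,c}\to -\log 2$. For $\bar F_{\rm que}(\gb)\ge -\gb$: here I would instead restrict to the fully collapsed path $S_i \equiv 1$ for $i\ge 1$ (all increments zero after the first), concentrating all monomers on a single site; using~\eqref{eq:equ-fe} or working directly with~\eqref{eq:bar-pf}, the cumulated charge satisfies $\gO_n(1)=\gO_n$, so this single configuration contributes $2^{-n}\exp(-\gb\,\gO_n^2)$, which is too small pointwise. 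The cleaner route is to use the equivalent formulation $F_{\rm que}(\gb)=\lim\frac1n\log Z_n^{\gb,\go}$ via~\eqref{eq:equ-fe}: restricting $Z_n^{\gb,\go}$ to the single collapsed path gives $Z_n^{\gb,\go}\ge \exp(-\gb H_n^\go)$, and one averages $H_n^\go$, using $\bbE[H_n^\go]=0$ for unbiased charges, to extract the bound; translating back through $\bar F_{\rm que}(\gb)=-\gb-\log 2 + F_{\rm que}(2\gb)$ yields the claim.

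\textbf{Main obstacle.} The convexity and monotonicity in Item (1) are routine once the variance representation is invoked. The genuine subtlety lies in the lower bound $\bar F_{\rm que}(\gb)\ge -\gb$ of Item (2): a single deterministic path does not immediately give an energy of order $n$ because of the random fluctuations of $\gO_n^2$, so the argument must either select an $n$-dependent favorable collapsed configuration or average the Hamiltonian over charges in the right convention. The remark in the excerpt that this lower bound already appeared in~\cite{DeGrHi92} suggests the intended argument is the collapsed-state comparison, and the care needed is precisely in controlling the charge-dependent term so that the $\frac1n$ normalization kills the $o(n)$ fluctuations while retaining the linear-in-$\gb$ contribution.
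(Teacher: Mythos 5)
Your Item (1) is correct and is essentially the paper's argument: the first $\gb$-derivative of the finite-volume free energy is minus the expected (nonnegative) energy and the second derivative is a variance, so each $\frac1n\bbE\log\bar Z^{\gb,\go}_{n,c}$ is convex and non-increasing, and these properties pass to the supremum. Both halves of your Item (2), however, have genuine gaps, and in each case the single-path strategy you propose loses exactly a $\log 2$ or a $\gb$.

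For $\bar F_{\rm que}(\gb)\ge-\gb$: restricting to the single collapsed path and averaging the Hamiltonian over the charges gives $\bbE\log Z_n^{2\gb,\go}\ge -2\gb\,\bbE\big[\tfrac12(\gO_n^2-n)\big]=0$, which via \eqref{eq:equ-fe} yields only $\bar F_{\rm que}(\gb)\ge-\gb-\log 2$; by keeping a single path you discard the walk's entropy of $\log 2$ per step, and the resulting bound is strictly weaker than the claim for every $\gb$. The paper's argument keeps that entropy by applying Jensen's inequality to the path expectation \emph{inside} the logarithm,
\beq
\log\bar Z_n^{\gb,\go}\;\ge\;-\gb\,\bE\Big[\sum_{1\le i,j\le n}\go_i\go_j\ind_{\{S_i=S_j\}}\Big],
\eeq
and then uses $\bbE[\go_i\go_j]=\ind_{\{i=j\}}$ to get $\bbE\log\bar Z_n^{\gb,\go}\ge-\gb n$ directly.

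For $\bar F_{\rm que}(\gb)\ge-\log 2$: the fully stretched path does not work. In the convention \eqref{eq:bar-pf} its energy is $\sum_x\gO_n(x)^2=\sum_x\go_x^2=n$ (the diagonal terms do not vanish), and in the original convention it has $H_n^\go=0$; either way you only recover $\bar F_{\rm que}(\gb)\ge-\log 2-\gb$. To reach $-\log 2$ you need a single path whose energy in the convention \eqref{eq:bar-pf} is $o(n)$, i.e.\ a charge-adapted folding with near-complete cancellation of the charges on each visited site. The paper folds at the last-passage times $\gs_i=\sup\{1\le k\le n\colon\gO_k=i\}$, $1\le i\le\gO_n$ (assuming $\gO_n>0$), so that each visited site carries cumulated charge exactly $1$, the total energy is $|\gO_n|$, and hence $\frac1n\bbE\log\bar Z_n^{\gb,\go}\ge-\log 2-\gb\,\bbE|\gO_n|/n\to-\log 2$. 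You do sense in your closing paragraph that an $n$-dependent favorable configuration is required somewhere, but you attach that difficulty to the $-\gb$ bound (which is the easy Jensen bound) rather than to the $-\log 2$ bound, and you never exhibit the configuration that makes the latter work.
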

We do not know whether the quenched free energy is differentiable as a function of the inverse temperature. If it were so, then by convexity the derivative would be the limit of the derivative of the finite-volume free energy, as $n\to\infty$ (see Item (1) in Conjecture~\ref{conj:DGH}).
\begin{proof}[Proof of Proposition~\ref{pr:prop-fe}]
For (1) it is enough to check that
\beq
\ba
\partial_\gb\Big(\frac1n \log \bar Z^{\gb,\go}_n\Big) &= - \bar \bE_n^{\gb,\go}\Big(\sum_{x\ge 1} \gO_n(x)^2\Big) \le 0,\\
\partial^2_\gb\Big(\frac1n \log \bar Z^{\gb,\go}_n\Big) &= \bar \bV_n^{\gb,\go}\Big(\sum_{x\ge 1} \gO_n(x)^2\Big) \ge 0.
\ea
\eeq
Let us deal with (2). By Jensen's inequality and the i.i.d.\ assumption on $\go$,
\beq
\bbE \log \bar Z^{\gb,\go}_n \ge - \gb \bbE \bE\Big( \sum_{1\le i,j\le n} \go_i \go_j \ind_{\{S_i = S_j\}}\Big) = -\gb n,
\eeq
which yields the first inequality. To obtain the second inequality, let us first assume for simplicity that $\gO_n >0$ and define, for every $1\le i \le \gO_n $,
\beq
\label{eq:random_times}
\gs_i = \sup\{1\le k \le n\colon \gO_k = i\}.
\eeq
By considering the one particular path which satisfies $\gD S_k = 1$ if and only if $k\in \gs=\{\gs_i,\ 1\le i \le \gO_n\}$, for $1\le k\le n$ (i.e. $\tau_i = \gs_i$ for $1\le i \le \gO_n $), we obtain
\beq
\frac1n \log \bar Z^{\gb,\go}_n \ge -\log 2 - \gb \frac{\gO_n}{n}.
\eeq
One may readily adapt the argument to the case $\gO_n \le0$ and obtain
\beq
\label{eq:quenched-low-temp-LB}
\frac1n \log \bar Z^{\gb,\go}_n \ge -\log 2 - \gb \frac{|\gO_n|}{n},
\eeq
leading to the following lower bound:
\beq
\frac1n \bbE \log \bar Z^{\gb,\go}_n \ge -\log 2 - \gb \frac{\bbE |\gO_n|}{n} = -\log 2 + o(1),
\eeq
which completes the proof of (2).
\end{proof}
\begin{figure}
  \centering
  \includegraphics[width=0.7\textwidth]{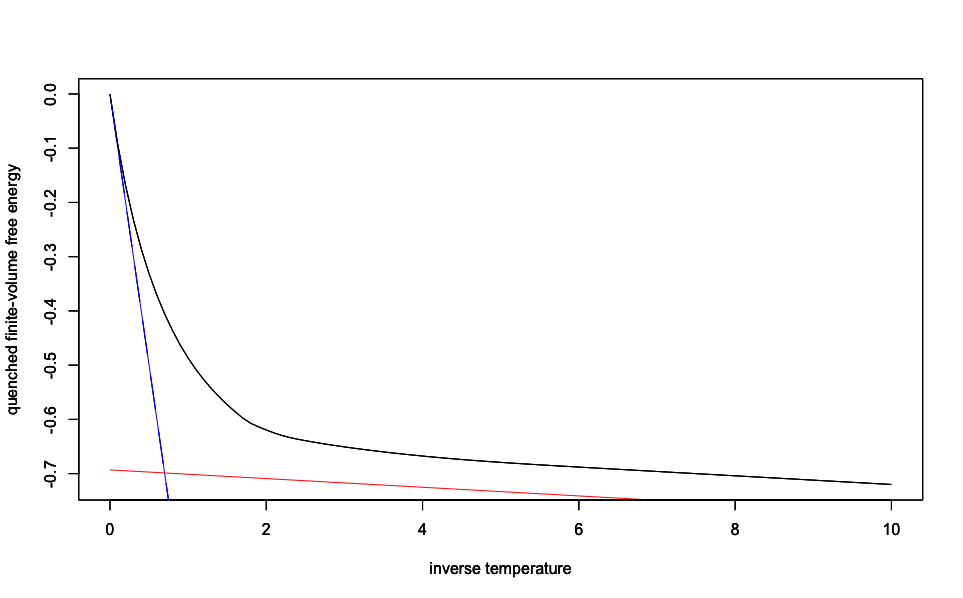}
  \caption{Quenched finite-volume free energy $(1/n) \log \bar Z^{\gb,\go}_n$ versus inverse temperature $\gb$ for centered $\pm 1$ i.id.\ charges $\go$ and polymer size $n=1000$ as defined in~\eqref{eq:bar-pf}. The high-temperature lower bound ($-\gb$) and the low-temperature lower bound from~\eqref{eq:quenched-low-temp-LB} respectively appear in blue and red.}
  \label{fig:QuenchedFE_N1000}
\end{figure}
\begin{figure}
  \centering
  \includegraphics[width=0.7\textwidth]{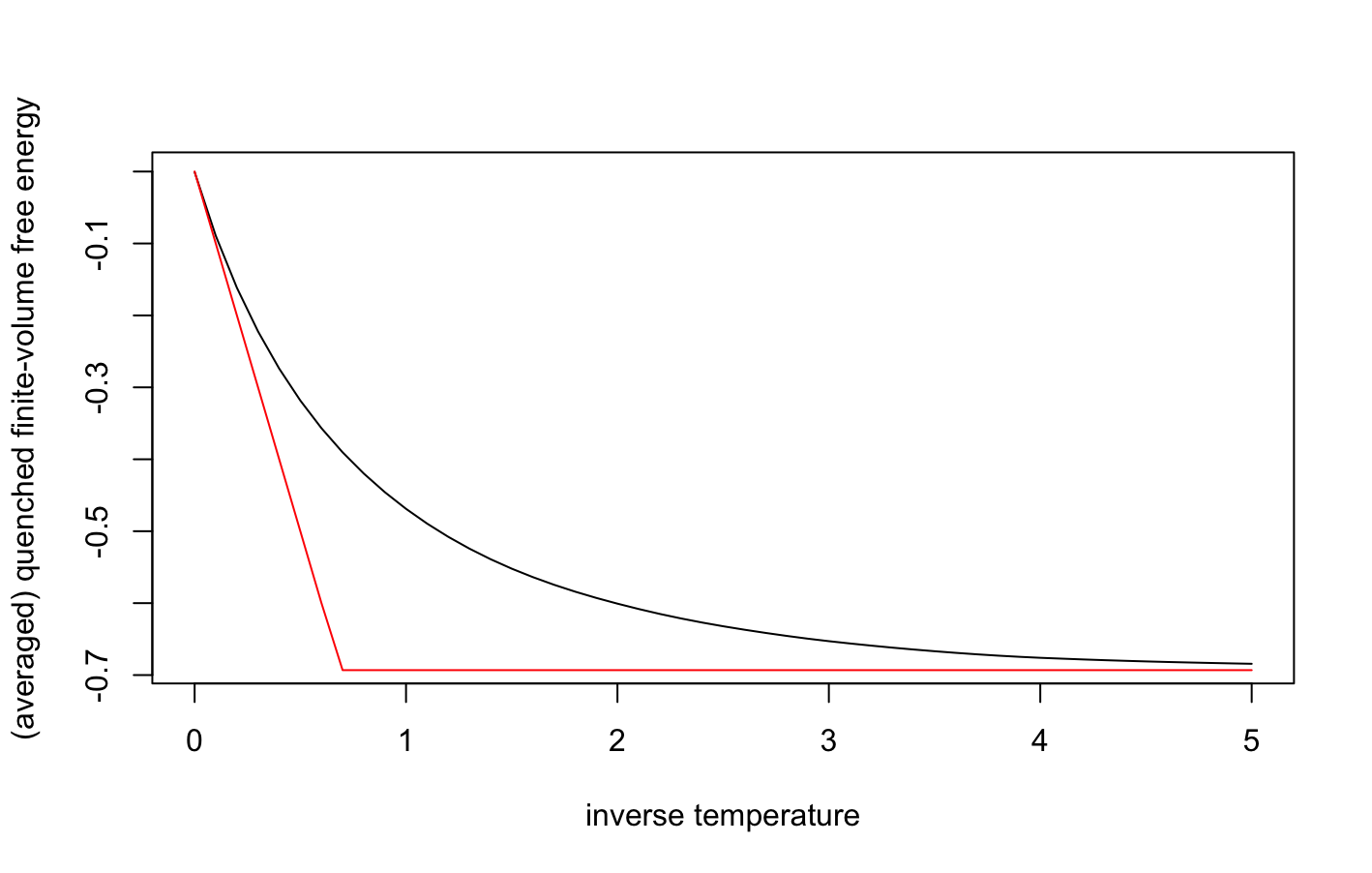}
  \caption{Averaged quenched finite-volume free energy $(1/n) \bbE  \log \bar Z^{\gb,\go}_n$ versus inverse temperature $\gb$ for centered $\pm 1$ i.id.\ charges $\go$ and polymer size $n=1000$ as defined in~\eqref{eq:bar-pf}. We have used $100$ samples to approximate the average over the charge distribution. The lower bound stated in Proposition~\ref{pr:prop-fe} is shown in red.}
  \label{fig:Averaged_QuenchedFE_N1000_Nsample100}
\end{figure}
\subsubsection{The LDP for words in a quenched random letter sequence}
\label{sec:LDP}
As we already observed in Section~\ref{sec:renewal_times}, the present model can be formulated as a statistical mechanics model built on a renewal sequence in the presence of quenched randomness, like the random pinning and copolymer models~\cite{dH09,Gia07,Gia11}. In these models, one can visualize the random charge sequence $(\go_i)_{i\ge 1}$ as a random \emph{letter sequence} from which the renewal process $(\tau_k)_{k\ge 0}$ cuts a \emph{word sequence} $(Y_k)_{k\ge 1}$, where $Y_k := (\go_i)_{\tau_{k-1}<i\le \tau_k}$ for every $k\ge 1$. Then, the energy or Hamiltonian function may be recast as an appropriate functional applied to the \emph{word sequence empirical measure}. The purpose of this section is to apply a large deviation principle (LDP) for the empirical measure of such words (when the letter sequence is quenched) in order to obtain a variational formula for the free energy. This LDP at the level of measures was originally introduced by Birkner~\cite{B08}, shortly later generalized to a broader class of renewal tails by Birkner, Greven, den Hollander~\cite{BGDH10,BGDH10-add} and successfully applied to the random pinning and copolymer models~\cite{BdHO15,CdH2021,CdH13,dHO13,Mourrat12}.\\
\par To implement this idea, let us first define the grand canonical partition function:
\beq
\cZ_{\gb,\go}(f):= \sum_{n\ge 1} \bar Z_{n,c}^{\gb,\go} e^{-fn}, \qquad f\in\bbR.
\eeq
Note that we use the constrained version of the partition function defined in~\eqref{eq:bar-pf-constr}. By Theorem~\ref{thm:que-fe}, the free energy is the infimum of those values of $f$ which make the latter sum converge:
\beq
\label{eq:bar-f-que-gcpf}
\bar F_{\rm que}(\gb) = \inf\{f\in \bbR \colon \cZ_{\gb,\go}(f)< \infty\}.
\eeq
By~\eqref{eq:bar-pf2}, we may write
\beq
\label{eq:from-Z-f-to-W-k}
\ba
\cZ_{\gb,\go}(f) &= \sum_{n\ge 1} e^{-fn} \sum_{k\ge 1} \bE\Big[
\exp\Big(-\gb \sum_{1\le i\le k} \gO(\tau_{i-1}, \tau_i]^2\Big)
  \ind_{\{\tau_k = n\}}\Big]
\\
&= \sum_{k\ge 1} \bE\Big[\exp\Big(-\sum_{i=1}^k [\gb \gO(\tau_{i-1}, \tau_i]^2 + f(\tau_i- \tau_{i-1})]\Big) \Big]=:\sum_{k\ge 1} W_k^{\gb,\go}.
\ea
\eeq
Using the Large Deviation Principle for words cut out of a quenched random sequence of letters and Varadhan's lemma~\cite[Theorem 1 and Corollary 1]{B08} we obtain the following $\bbP$-a.s.\ limit:
\beq
\label{eq:from-W-k-to-J-f}
\lim_{k\to \infty} \frac 1k \log W_k^{\gb,\go} = -J(\gb,f), \qquad f>-\log 2,
\eeq
where
\beq
\label{eq:var-repr-J}
J(\gb, f):= \inf_{Q\in \cR_0} \int [\gb(y_{1,1} + \ldots +y_{1,\ell(y_1)})^2 + f \ell(y_1)] Q(\dd y) + H(Q| Q_0)
\eeq
and with the following notation:
\begin{itemize}
\item $Q$ is a probability distribution on sequences of \emph{words}. Words are elements of $\cup_{\ell\ge 1} \bbR^\ell$, and $\ell(y_1)$ denotes the length of $y_1= (y_{1,1}, \ldots, y_{1,\ell(y_1)})$, that is the first word in the sequence $y = (y_i)_{i\ge 1}$.
\item $Q_0$ is the probability distribution for i.i.d.\ words with marginal
\beq
\label{eq:def-Q0}
Q_0(\dd y_1) = \sum_{\ell\ge 1} 2^{-\ell} \prod_{1\le i \le \ell} \bbP(\dd y_{1,i}),
\eeq
that is the one induced by the charge/letter sequence law $\bbP$ (Section~\ref{sec:origin-conv}) and the geometrically distributed inter-arrival times $(\tau_k - \tau_{k-1})_{k\ge 1}$ (Section~\ref{sec:renewal_times}).
\item $H(Q|Q_0)$ is the \emph{specific relative entropy} of $Q$ w.r.t.\ $Q_0$, defined as
\beq
\label{eq:spec-rel-ent}
H(Q|Q_0) = \lim_{n\to \infty} \frac 1n h(Q[(Y_1, \ldots, Y_n)\in \cdot]\, |\,  Q_0[(Y_1, \ldots, Y_n)\in \cdot]),
\eeq
where $h(\cdot | \cdot)$ is the usual relative entropy of one probability measure w.r.t.\ another and the limit is non-decreasing.
\item $\cR_0$ denotes the set of shift-invariant distributions on word sequences which are \emph{compatible with $\bbP$}, meaning that when one concatenates a typical word sequence into a letter sequence, the empirical distribution of the latter (at the level of processes) converges weakly to $\bbP$, see~\cite[Equation (8)]{B08} for a formal definition.
\end{itemize}
Note that $J(\gb,f)$ is non-decreasing in both its variables. The fact that~\eqref{eq:from-W-k-to-J-f} is not stated for all the possible values of $f$ comes from the lack of boundedness of the function to which we apply Varadhan's lemma. As in~\cite{B08} (see the remark around Eq. (19) therein) we  use instead an exponential tightness property~\cite[Condition (4.3.3) in Theorem 4.3.1]{DemboZei10:book}, hence the restriction $f>-\log 2$. This restriction is harmless since we know from a separate argument that $\bar F_{\rm que}(\gb)\ge -\log 2$ for every $\gb>0$, see Proposition~\ref{pr:prop-fe}. 
Combining~\eqref{eq:bar-f-que-gcpf}, \eqref{eq:from-Z-f-to-W-k} and~\eqref{eq:from-W-k-to-J-f}, we finally obtain the following:
\begin{theorem}
\label{thm:var-repr-fe}
For every $\gb>0$,
\beq
\bar F_{\rm que}(\gb) = \inf\{f\in \bbR \colon J(\gb,f) > 0\}.
\eeq
\end{theorem}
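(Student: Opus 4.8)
The plan is to read Theorem~\ref{thm:var-repr-fe} as an instance of the Cauchy--Hadamard (root) criterion applied to the series defining the grand canonical partition function. By \eqref{eq:bar-f-que-gcpf} the free energy is the abscissa of convergence $\bar F_{\rm que}(\gb) = \inf\{f : \cZ_{\gb,\go}(f) < \infty\}$, and by \eqref{eq:from-Z-f-to-W-k} we have $\cZ_{\gb,\go}(f) = \sum_{k\ge 1} W_k^{\gb,\go}$. The $\bbP$-a.s.\ limit \eqref{eq:from-W-k-to-J-f} identifies the exponential rate of the summands: for $f>-\log 2$, $(W_k^{\gb,\go})^{1/k}\to e^{-J(\gb,f)}$. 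Hence the root test yields, for every fixed $f>-\log 2$ and $\bbP$-a.s., that $\cZ_{\gb,\go}(f)<\infty$ when $J(\gb,f)>0$ and $\cZ_{\gb,\go}(f)=\infty$ when $J(\gb,f)<0$. Comparing the two descriptions of the threshold in $f$ should produce the claimed identity $\bar F_{\rm que}(\gb)=\inf\{f:J(\gb,f)>0\}$.

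First I would record two monotonicity facts. On one hand, $\cZ_{\gb,\go}(f)=\sum_n \bar Z^{\gb,\go}_{n,c}e^{-fn}$ is non-increasing in $f$, so $\{f:\cZ_{\gb,\go}(f)<\infty\}$ is a half-line with left endpoint $\bar F_{\rm que}(\gb)$. On the other hand, writing $J(\gb,f)=\inf_{Q\in\cR_0}\big[\Phi(Q)+f\,L(Q)\big]$ with $\Phi(Q)=\int \gb(y_{1,1}+\ldots+y_{1,\ell(y_1)})^2\,Q(\dd y)+H(Q|Q_0)$ and $L(Q)=\int \ell(y_1)\,Q(\dd y)\ge 1$, I see that $J(\gb,\cdot)$ is an infimum of affine functions of $f$ with slopes $\ge 1$; it is therefore concave (hence continuous on the interior of its domain) and satisfies the strict monotonicity estimate $J(\gb,f_2)-J(\gb,f_1)\ge f_2-f_1$ for $f_1<f_2$, since $L(Q)\ge 1$. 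This last inequality is the key structural input: it upgrades ``non-decreasing'' (as already noted after \eqref{eq:var-repr-J}) to a quantitative strict increase that disposes of the boundary case left open by the root test.

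With these in hand I would prove the two inequalities. For $f>-\log 2$ with $J(\gb,f)>0$, the root test gives $\cZ_{\gb,\go}(f)<\infty$, whence $f\ge \bar F_{\rm que}(\gb)$; conversely, for $-\log 2<f<\bar F_{\rm que}(\gb)$ the series diverges, so $J(\gb,f)\le 0$. By continuity of $J(\gb,\cdot)$ this forces $J(\gb,\bar F_{\rm que}(\gb))=0$ (using $\bar F_{\rm que}(\gb)\ge -\log 2$ from Proposition~\ref{pr:prop-fe}), and the slope estimate then gives $J(\gb,f)\ge f-\bar F_{\rm que}(\gb)>0$ for every $f>\bar F_{\rm que}(\gb)$. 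Thus $\{f:J(\gb,f)>0\}$ has left endpoint exactly $\bar F_{\rm que}(\gb)$, which is the assertion.

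Two technical points need care, and the second is the real obstacle. The restriction $f>-\log 2$ in \eqref{eq:from-W-k-to-J-f} is harmless because Proposition~\ref{pr:prop-fe} places $\bar F_{\rm que}(\gb)$ in $[-\log 2,\infty)$, so the whole argument lives in the admissible range (and the edge case $\bar F_{\rm que}(\gb)=-\log 2$ is covered by the same slope estimate applied on $(-\log 2,\infty)$). The more delicate issue is that \eqref{eq:from-W-k-to-J-f} is an a.s.\ statement for each fixed $f$: to run the root test I would fix the convergence on a countable dense set of $f>-\log 2$ and interpolate using the monotonicity of both $f\mapsto\cZ_{\gb,\go}(f)$ and $f\mapsto J(\gb,f)$, so that the deterministic thresholds coincide on a single full-measure event. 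The genuinely load-bearing step is the slope bound $J(\gb,f_2)-J(\gb,f_1)\ge f_2-f_1$, without which the root test would leave the critical value $J=0$ undetermined and the two infima could a priori differ.
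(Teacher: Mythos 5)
Your argument is the paper's argument: the proof there consists precisely of combining \eqref{eq:bar-f-que-gcpf}, \eqref{eq:from-Z-f-to-W-k} and \eqref{eq:from-W-k-to-J-f} via the root test applied to $\sum_k W_k^{\gb,\go}$, exactly as you do. The details you add --- the slope bound $J(\gb,f_2)-J(\gb,f_1)\ge f_2-f_1$ coming from $\int \ell(y_1)\,Q(\dd y)\ge 1$, which settles the boundary case $J=0$ that the bare root test leaves open, and the countable-dense-set device for upgrading the fixed-$f$ almost-sure statement --- are left implicit in the paper, so your write-up is if anything more complete.
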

Let us comment on a few elementary bounds obtained from this formula. By plugging $Q = Q_0$ into~\eqref{eq:var-repr-J} we obtain that $J(\gb,f) \le 2(\gb +f)$, from which we retrieve the elementary Jensen bound $\bar F_{\rm que}(\gb) \ge -\gb$. One can also try to restrict the infimum in~\eqref{eq:var-repr-J} to those $Q$'s under which words are i.i.d.\ with marginal of the form
\beq
Q(\dd y_1) = \sum_{\ell \ge 1} K(\ell) \prod_{1\le i \le \ell} \bbP(\dd y_{1,i}),
\eeq 
where $K$ ranges over all probability distributions on the set of positive integers. We obtain thereby
\beq
J(\gb,f) \le \inf_K\{(f+\log 2 + \gb)m(K) - h(K)\},
\eeq
where $m(K) := \sum_{\ell \ge 1} \ell K(\ell)$ and $h(K) := - \sum_{\ell \ge 1} K(\ell) \log K(\ell)$ are the mean and entropy of $K$, respectively.
A simple argument (using Lagrange multipliers) shows that the optimal $K$ follows a geometric distribution, i.e.\ $K(\ell) = \gt^{\ell-1}(1-\gt)$ for some $\gt\in(0,1)$. Therefore,
\beq
J(\gb,f) \le \inf_{\gt\in(0,1)} \Big\{\frac{f+\log 2 + \gb}{1-\gt} + \Big(\frac{\gt}{1-\gt}\Big)\log \gt + \log(1-\gt) \Big\}.
\eeq
The latter is optimal at $\gt_0 := \exp(-[f+\log 2 + \gb])$, provided $\gt_0\in(0,1)$, leading in that case to $J(\gb,f) \le \log(\exp(f+\log 2 + \gb)-1)$, and ultimately to $\bar F_{\rm que}(\gb) \ge - \gb$. Unfortunately, this is not any better than the Jensen lower bound. The reason is that this strategy does not take under consideration the charge sequence. In Section~\ref{sec:high-temp-FE}, we shall implement a more refined strategy where each monomer looks at the charge of the following one to decide its state (folded or stretched) and improve thereby our lower bound on the free energy.
\subsection{High-temperature estimate on the free energy}
\label{sec:high-temp-FE}
This section is primarily focussed on high-temperature estimates on the quenched free energy of the charged polymer, even if some results that we derive {\it en route} (namely Theorem~\ref{thm:que-LB} and Proposition~\ref{eq:annealed-bound}) are valid at all temperatures.
\subsubsection{A lower bound}
The starting point of our lower bound is Theorem~\ref{thm:var-repr-fe}. By testing an appropriate probability distribution on word sequences in the variational formula~\eqref{eq:var-repr-J}, we obtain an upper bound on the rate function $J(\gb,f)$, which in turn yields a lower bound on the free energy. The distribution $Q\in \cR_0$ which we test is a tilt of the $\geo(1/2)$ distribution which looks at pairs of consecutive charges to decide on where to fold the polymer chain, see~\eqref{eq:cond-law-Qu0} below. In view of Proposition~\ref{pr:DGH}, it is plausible that a better strategy would look at \emph{all} pairs of charges, with a decreasing influence for charges that are far-apart, but the implementation of such strategy seems technically much more demanding.
\begin{theorem}
\label{thm:que-LB}
For every $\gb>0$,
\beq
\bar F_{\rm que}(\gb) \ge -\gb + \sup_{0\le u \le \log 2} \Big[ \gb \frac{\sinh(u)}{1+\frac{\sinh(u)}{2}} - \frac{1}{2} u \sinh(u) - \eta(u) \Big] ,
\eeq
where
\beq
\label{eq:def-eta-u}
\eta(u) := \frac14[\log(2-e^u)(2-e^u)+\log(2-e^{-u})(2-e^{-u})].
\eeq
\end{theorem}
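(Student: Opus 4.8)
The plan is to feed a single, explicit test measure into the variational formula of Theorem~\ref{thm:var-repr-fe}. Since $\bar F_{\rm que}(\gb)=\inf\{f\in\bbR\colon J(\gb,f)>0\}$ and $J(\gb,\cdot)$ is non-decreasing, to prove a lower bound $\bar F_{\rm que}(\gb)\ge f$ it is enough to exhibit some $Q\in\cR_0$ making the bracket in~\eqref{eq:var-repr-J} non-positive: then $J(\gb,f)\le 0$, so $f$ cannot exceed the free energy. Thus the whole task reduces to choosing one good $Q=Q_u$ and evaluating its three contributions in~\eqref{eq:var-repr-J}: the mean energy $\int\gb\,(y_{1,1}+\cdots+y_{1,\ell(y_1)})^2\,Q_u(\dd y)$, the mean word length $\int\ell(y_1)\,Q_u(\dd y)$, and the specific relative entropy $H(Q_u|Q_0)$.

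First I would construct $Q_u$ for $u\in[0,\log 2]$ (the conditional law appearing in~\eqref{eq:cond-law-Qu0}) by cutting a genuine sample $\go\sim\bbP$ with the charge-adaptive renewal in which, independently for each monomer $i$, the increment $\gD S_i$ is set to $0$ (folded) with probability $\tfrac{e^{u}}{2}$ when $\go_{i+1}=-\go_i$ and with probability $\tfrac{e^{-u}}{2}$ when $\go_{i+1}=\go_i$, and to $1$ (stretched) otherwise; the constraint $u\le\log 2$ makes these legitimate probabilities, and at $u=0$ one recovers the $\geo(1/2)$ renewal underlying $Q_0$. Because this cutting rule is stationary and is applied to a true realisation of $\bbP$, concatenating the words returns the letter law $\bbP$, so $Q_u$ is shift-invariant and compatible with $\bbP$, i.e.\ $Q_u\in\cR_0$; establishing this membership is the first thing I would check.

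Next I would evaluate the three quantities, the common factor $\int\ell\,Q_u(\dd y)$ cancelling between the energy-to-length ratio and the entropy. For the energy I would expand each block charge squared as $\sum_{i,i'}\go_i\go_{i'}\ind\{i\leftrightarrow i'\}$, note that $i\leftrightarrow i'$ (same block) forces the intervening steps $\gD S_i,\dots,\gD S_{i'-1}$ all to vanish, and average over the charges: the conditional fold probability at monomer $k$ depends only on $(\go_k,\go_{k+1})$, so the two-point function factorises through the $2\times2$ transfer matrix $\tfrac14\bigl(\begin{smallmatrix}e^{-u}&e^{u}\\ e^{u}&e^{-u}\end{smallmatrix}\bigr)$ on signs. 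Its eigenvalue on the sign vector is $-\tfrac{\sinh u}{2}$, so the charge correlation at separation $m$ equals $(-\tfrac{\sinh u}{2})^m$, and summing the resulting geometric series (convergent since $\tfrac{\sinh u}{2}\le\tfrac38<1$ on $[0,\log 2]$) gives mean energy over mean length equal to $\gb\,\tfrac{2-\sinh u}{2+\sinh u}=\gb-\gb\tfrac{\sinh u}{1+\sinh(u)/2}$. For the entropy I would use that $Q_u$ and $Q_0$ share the charge marginal and differ only through the conditionally independent fold/stretch decisions, so that $H(Q_u|Q_0)$ equals $\int\ell\,Q_u(\dd y)$ times the per-monomer Bernoulli relative entropy $p\log(2p)+(1-p)\log(2(1-p))$ averaged over $p\in\{e^{u}/2,\,e^{-u}/2\}$ with equal weights; a short computation turns this into $\tfrac12 u\sinh u+\eta(u)$ with $\eta$ as in~\eqref{eq:def-eta-u}.

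Finally, substituting these into~\eqref{eq:var-repr-J} and dividing the inequality ``bracket $\le 0$'' by $\int\ell\,Q_u(\dd y)>0$ shows $J(\gb,f)\le 0$ as soon as $f\le -\gb+\gb\tfrac{\sinh u}{1+\sinh(u)/2}-\tfrac12 u\sinh u-\eta(u)$; optimising over $u\in[0,\log 2]$ yields the stated bound. I expect the main obstacle to be the rigorous evaluation of the specific relative entropy: one must justify replacing the per-word quantity $H(Q_u|Q_0)$ by the per-monomer Bernoulli entropy (using the common letter law and the conditional independence of the decisions given the charges) and control the length-biasing and boundary effects created by the one-step look-ahead, which couples the last decision of each word to the first charge of the next. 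The energy computation, although clean once cast as a transfer-matrix sum, likewise requires care with the block-charge bookkeeping at the ends and with the convention $\bar Z_0=2$.
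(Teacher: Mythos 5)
Your proposal follows the paper's proof essentially verbatim: you test a one-step look-ahead tilt $Q_u$ of the $\geo(1/2)$ word law in the variational formula of Theorem~\ref{thm:var-repr-fe}, compute the per-monomer energy via the sign-correlation $(-\sinh(u)/2)^{j-i}$ (the paper's Lemma~\ref{lem:comp-prod-go-Qu}) and the per-monomer entropy as an averaged Bernoulli relative entropy, then optimise over $u\in[0,\log 2]$. The only (inconsequential) deviation is that your fold/stretch rule is not literally the one in~\eqref{eq:cond-law-Qu0} --- you tilt the \emph{fold} probability to $\tfrac12 e^{-u\go_i\go_{i+1}}$ whereas the paper tilts the \emph{stretch} probability to $\tfrac12 e^{u\go_i\go_{i+1}}$ --- but both choices yield the same per-step correlation $-\tfrac{\sinh u}{2}$ and the same per-monomer relative entropy $\tfrac12 u\sinh u+\eta(u)$, hence the identical bound.
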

We readily deduce thereof the following high temperature lower bound:
\begin{corollary} As $\gb \to 0$,
\label{eq:que-high-temp}
\beq
\bar F_{\rm que}(\gb) \ge -\gb + \frac {\gb^2}{2} [1+o(1)].
\eeq
\end{corollary}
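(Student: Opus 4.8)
The plan is to deduce the corollary directly from the variational lower bound in Theorem~\ref{thm:que-LB}. Since that bound is a supremum over $u\in[0,\log 2]$, it suffices to evaluate the bracketed quantity at a single well-chosen value of $u$ and to track its behaviour as $\gb\to 0$. Dropping the terms of higher order, the bracket looks like $\gb u-\tfrac12 u^2$ for small $u$, whose unconstrained maximiser is $u=\gb$; this value lies in $[0,\log 2]$ as soon as $\gb\le\log 2$, which holds in the regime $\gb\to 0$. I would therefore simply substitute $u=\gb$ and expand.

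With $u=\gb$, the three contributions to the bracket are expanded separately as $\gb\to 0$. Using $\sinh\gb=\gb+O(\gb^3)$ one gets $\frac{\sinh\gb}{1+\sinh\gb/2}=\gb-\tfrac{\gb^2}{2}+O(\gb^3)$, so the first term equals $\gb\cdot(\gb-\tfrac{\gb^2}{2}+O(\gb^3))=\gb^2+O(\gb^3)$. The second term is $-\tfrac12\gb\sinh\gb=-\tfrac{\gb^2}{2}+O(\gb^4)$. Adding these already produces $\tfrac{\gb^2}{2}+O(\gb^3)$, so everything reduces to checking that $\eta(\gb)$ does not contribute at order $\gb^2$, i.e.\ that $\eta(\gb)=o(\gb^2)$.

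The only point requiring a little care—and the main (mild) obstacle—is this estimate on $\eta$. Writing $\phi(t):=t\log t$, one has $\eta(u)=\tfrac14[\phi(2-e^{u})+\phi(2-e^{-u})]$, and swapping $u\leftrightarrow -u$ exchanges the two summands, so $\eta$ is even. Since $\phi(1)=0$, $\phi'(1)=1$ and $\phi''(1)=1$, a chain-rule computation gives, for $F(u):=\phi(2-e^{u})$, the values $F(0)=0$, $F'(0)=-1$ and $F''(0)=\phi''(1)-\phi'(1)=0$; as the companion summand is $\phi(2-e^{-u})=F(-u)$, this yields $\eta(0)=\eta'(0)=\eta''(0)=0$. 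Evenness kills the cubic coefficient as well, so $\eta(u)=O(u^4)$ and in particular $\eta(\gb)=O(\gb^4)=o(\gb^2)$.

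Combining the three expansions, the bracket evaluated at $u=\gb$ equals $\tfrac{\gb^2}{2}+O(\gb^3)=\tfrac{\gb^2}{2}[1+o(1)]$, and since the supremum in Theorem~\ref{thm:que-LB} dominates this value, the stated bound $\bar F_{\rm que}(\gb)\ge-\gb+\tfrac{\gb^2}{2}[1+o(1)]$ follows. Note that no optimality of the choice $u=\gb$ is needed, only that it is admissible; the cancellation of the quadratic part of $\eta$ is what makes this admissible test value already sharp at order $\gb^2$.
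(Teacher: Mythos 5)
Your proof is correct and follows essentially the same route as the paper: substitute $u$ proportional to $\gb$ (the paper takes $u=C\gb$ and optimises to $C=1$, you take $u=\gb$ directly) into the supremum of Theorem~\ref{thm:que-LB} and expand, using $\frac{\sinh u}{1+\sinh(u)/2}\sim u$, $u\sinh u\sim u^2$ and $\eta(u)=o(u^2)$. Your explicit verification that $\eta(u)=O(u^4)$ via evenness and the vanishing of $\eta''(0)$ is a nice addition — the paper merely asserts $\eta(u)=o(u^2)$ — but it does not change the argument.
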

\begin{remark}
\label{rmk:naive-exp}
Expanding the finite-volume free energy at high temperature (see Proposition~\ref{pr:finite-fe-high-temp} for the precise assumptions) and (blindly) interchanging the $\gb\to0$ and $n\to\infty$ limits leads to the prediction:
\beq
\bar F_{\rm que}(\gb) \stackrel{(?)}{=} -\gb + \frac {4\gb^2}3  [1+o(1)].
\eeq
However, such naive expansions do not always give the correct values for high-temperatures limits, see e.g.~\cite{BCPSZ14} in the context of pinning and copolymer models.
\end{remark}
\begin{proof}[Proof of Theorem~\ref{thm:que-LB}]
For every $u\in[0,\log 2]$, we introduce a law on word sequences $Y=(Y_i)_{i\ge 1}$, denoted by $Q_u$, such that (i) the letter sequence distribution equals the original charge sequence distribution, i.e.\ $Q_u(\go = \kappa(Y) \in \cdot\,) = \bbP(\go\in \cdot\,)$, where $\kappa$ is the \emph{word-to-letter sequence concatenation}, and (ii) the length of the first word, denoted by $L$, is distributed as
\beq
\label{eq:cond-law-Qu0}
Q_u(L=\ell\, |\, \go) = \prod_{1\le i < \ell} (1-\tfrac12 e^{u \go_i \go_{i+1}}) \tfrac12 e^{u \go_\ell \go_{\ell+1}}, \qquad \ell\ge 1.
\eeq
In other words, $L =\inf\{i\ge 1\colon \gt_i = 1\}$ where $(\theta_i)_{i\ge 1}$ is a sequence of Bernoulli random variables that are independent (conditionally to $\go$) and such that
\beq
\label{eq:cond-law-Qu}
Q_u(\gt_i = 1\, |\, \go) = \tfrac12 e^{u\go_i \go_{i+1}}, \qquad i\ge 1.
\eeq
A similar formula is assumed to hold for the length of the next words, simply by appropriately shifting the letter sequence $\go$ on the r.h.s.\ of~\eqref{eq:cond-law-Qu0}. Note that the random variables $(\go_i\go_{i+1})_{i\ge 1}$ are actually independent and uniformly distributed on $\{-1,1\}$. Following this remark and averaging~\eqref{eq:cond-law-Qu0} over $\go$ yields
\beq
Q_u(L = \ell) = \Big(1- \frac{\cosh(u)}{2}\Big)^{\ell -1} \frac{\cosh(u)}{2}, \qquad \ell \ge 1,
\eeq
that is the geometric distribution with expectation $2/\cosh(u)$. The reader may check that under $Q_u$ the word sequence is a Markov chain with transitions
\beq
\mathfrak{Q}_u((\go_i)_{1\le i \le \ell_1}, (\go_i)_{\ell_1 < i \le \ell_2}) := 2^{\ell_1- \ell_2} (\tfrac 12 e^{u \go_{\ell_1}\go_{\ell_1+1}}) \prod_{\ell_1 < i < \ell_2}(1 - \tfrac12 e^{u\go_i \go_{i+1}})
\eeq
(the distribution of a given word depends on the last letter of the previous word) and started from the invariant word (probability) distribution 
\beq
\pi_u(\go_1, \ldots, \go_{\ell}) := 2^{-(\ell+1)}\cosh(u)\prod_{1\le i< \ell} (1- \tfrac12 e^{u\go_i \go_{i+1}}).
\eeq
Moreover, words are i.i.d.\ if and only if $u=0$, in which case one retrieves the law $Q_0$ defined in~\eqref{eq:def-Q0}. From all these observations we deduce that $Q_u\in \cR_0$, that is the restriction set of shift-invariant word sequence distributions which are compatible with $\bbP$, see the definition below~\eqref{eq:var-repr-J}.\\
\par  We may now plug $Q = Q_u$ into~\eqref{eq:var-repr-J}, so that we obtain
\beq
J(\gb,f)\le \gb \int (\go_1 + \ldots + \go_L)^2 \dd Q_u + f \int L\, \dd Q_u + H(Q_u | Q_0).
\eeq
We begin with computing the (specific) relative entropy. By~\eqref{eq:spec-rel-ent} and the ergodic theorem, we have
\beq
\ba
H(Q_u | Q_0) &= \int \log \mathfrak{Q}_u(Y_1,Y_2) \dd Q_u - \int \log \pi_0(Y_1) \dd Q_u\\
&= \int \Big[\sum_{1\le i < L} \log(2-e^{u\go_i\go_{i+1}}) + u \go_L\go_{L+1} \Big] \dd Q_u.
\ea
\eeq
 With the help of Lemma~\ref{lem:comp-prod-go-Qu} below, we find that
\beq
H(Q_u | Q_0) = \frac{2}{\cosh(u)}\Big(\eta(u) + \frac12 u \sinh(u) \Big).
\eeq
Using once again Lemma~\ref{lem:comp-prod-go-Qu}, we may compute the contribution of this strategy to the energy term:
\beq
\ba
\int (\go_1 + \ldots + \go_L)^2 \dd Q_u &= 
\int L \dd Q_u + 
2\sum_{1\le i < j}\int \go_i \go_j \ind_{\{L\ge j\}}  \dd Q_u\\
&= \frac{2}{\cosh(u)}\Big(1 - \frac{\sinh(u)}{1+\frac{\sinh(u)}{2}} \Big).
\ea
\eeq
Summing up the different terms, we finally obtain
\beq
J(\gb, f) \le \frac{2}{\cosh(u)} \Big[ \gb \Big( 1 - \frac{\sinh(u)}{1+\frac{\sinh(u)}{2}}\Big) + \frac{1}{2} u \sinh(u) + \eta(u) + f \Big].
\eeq
Hence, by Theorem~\ref{thm:var-repr-fe},
\beq
\bar F_{\rm que}(\gb) \ge \gb \Big( \frac{\sinh(u)}{1+\frac{\sinh(u)}{2}}-1\Big) - \frac{1}{2} u \sinh(u) - \eta(u) ,
\eeq
and the proof is complete.
\end{proof}

\begin{proof}[Proof of Corollary~\ref{eq:que-high-temp}]
Let us pick $u=C\gb$ in Theorem~\ref{thm:que-LB}, with $C>0$ to be determined later. Noting that, as $u\to 0$,
\beq
\frac{\sinh(u)}{1+\frac{\sinh(u)}{2}} \sim u, \qquad u\sinh(u) \sim u^2,
\qquad \eta(u) = o(u^2),
\eeq
we obtain, as $\gb\to 0$,
\beq
\bar F_{\rm que}(\gb) \ge -\gb + \gb^2(C - C^2/2) + o(\gb^2),
\eeq
which we optimise by setting $C=1$.
\end{proof}
\begin{lemma}
\label{lem:comp-prod-go-Qu}
For every $\Phi\colon \{-1,1\}\mapsto \bbR$ and $u\in [0, \log 2]$,
\beq
\label{lem:comp-prod-go-QuA}
\ba
\int \Phi(\go_i\go_{i+1}) \ind_{\{L>i\}} \dd Q_u &= \Big(1 - \frac{\cosh(u)}{2}\Big)^{i-1} \, \frac{\Phi(-1)(2-e^{-u})+\Phi(1)(2-e^{u})}{4},\\
\int \Phi(\go_i\go_{i+1}) \ind_{\{L=i\}} \dd Q_u &= \Big(1 - \frac{\cosh(u)}{2}\Big)^{i-1} \, \frac{\Phi(-1)e^{-u}+\Phi(1)e^u}{4},
\ea
\eeq
and for every $1\le i <j$,
\beq
\label{lem:comp-prod-go-QuB}
\int \go_i\go_j \ind_{\{L\ge j\}} \dd Q_u = \Big(1 - \frac{\cosh(u)}{2}\Big)^{i-1}\Big(- \frac{\sinh(u)}{2}\Big)^{j-i}.
\eeq
\end{lemma}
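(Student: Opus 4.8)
The plan is to reduce everything to the i.i.d.\ structure already noted below~\eqref{eq:cond-law-Qu}. Set $\xi_i := \go_i\go_{i+1}$; as observed there, under $Q_u$ the variables $(\xi_i)_{i\ge 1}$ are i.i.d.\ and uniform on $\{-1,1\}$. Conditionally on $\go$ the cut indicators $(\gt_i)_{i\ge1}$ are independent with $Q_u(\gt_i = 1\,|\,\go) = \tfrac12 e^{u\xi_i}$ (a well-defined probability precisely because $u\le \log 2$ forces $\tfrac12 e^u \le 1$), and this conditional law depends on $\go$ only through $\xi_i$. Consequently the pairs $(\xi_i,\gt_i)_{i\ge 1}$ form an i.i.d.\ sequence, whose common law is given by the four weights
\[
Q_u(\xi_i = \pm 1,\ \gt_i = 0) = \tfrac{2 - e^{\pm u}}{4}, \qquad Q_u(\xi_i = \pm 1,\ \gt_i = 1) = \tfrac{e^{\pm u}}{4},
\]
so that in particular $Q_u(\gt_i = 0) = 1 - \tfrac{\cosh u}{2}$.

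Next I would record the two measurability facts that make the integrals factorize. Since $L = \inf\{i\ge 1 : \gt_i = 1\}$, one has $\{L > i\} = \inter_{k=1}^{i}\{\gt_k = 0\}$, $\{L = i\} = \inter_{k=1}^{i-1}\{\gt_k = 0\}\cap\{\gt_i = 1\}$ and $\{L \ge j\} = \inter_{k=1}^{j-1}\{\gt_k = 0\}$. On the other hand the telescoping identity $\go_i\go_j = \prod_{k=i}^{j-1}\xi_k$ holds for $i < j$.

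For~\eqref{lem:comp-prod-go-QuA}, write the integrand of the first line as $\Phi(\xi_i)\prod_{k=1}^{i}\ind_{\{\gt_k=0\}}$; by independence of the pairs this factorizes into $\prod_{k=1}^{i-1} Q_u(\gt_k = 0)$ times $\int \Phi(\xi_i)\ind_{\{\gt_i = 0\}}\,\dd Q_u$, and the single-coordinate integral is read off the weight table as $\tfrac14[\Phi(1)(2 - e^u) + \Phi(-1)(2 - e^{-u})]$. The second line is identical with $\ind_{\{\gt_i = 1\}}$ in place of $\ind_{\{\gt_i = 0\}}$, producing $\tfrac14[\Phi(1)e^u + \Phi(-1)e^{-u}]$. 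For~\eqref{lem:comp-prod-go-QuB} I would combine the two identities above: the integrand becomes $\prod_{k=i}^{j-1}\xi_k \cdot \prod_{k=1}^{j-1}\ind_{\{\gt_k=0\}}$, which factorizes over the independent coordinates into $\prod_{k=1}^{i-1}Q_u(\gt_k=0)$ times $\prod_{k=i}^{j-1}\int \xi_k\ind_{\{\gt_k=0\}}\,\dd Q_u$; the first product is $(1 - \tfrac{\cosh u}{2})^{i-1}$, and each factor in the second equals $\tfrac14[(2-e^u) - (2 - e^{-u})] = -\tfrac{\sinh u}{2}$, giving $(-\tfrac{\sinh u}{2})^{j-i}$.

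The computation is essentially bookkeeping once the i.i.d.\ structure of $(\xi_i,\gt_i)$ is in place; the only point that deserves care is checking that the observable and the indicator in each integral are measurable with respect to disjoint-or-nested blocks of these independent pairs, so that the product of single-coordinate integrals is exactly the integral of the product. Concretely, $\go_i\go_j$ depends only on $\xi_i,\ldots,\xi_{j-1}$ while $\ind_{\{L\ge j\}}$ depends only on $\gt_1,\ldots,\gt_{j-1}$, placing both inside the first $j-1$ coordinates and ensuring the clean factorization. No genuine obstacle arises beyond this verification.
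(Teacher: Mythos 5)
Your proof is correct and follows essentially the same route as the paper: both arguments reduce to the facts that $(\go_i\go_{i+1})_{i\ge1}$ are i.i.d.\ uniform on $\{-1,1\}$, that the conditional law of $L$ given $\go$ factorizes over the cut indicators, and that $\go_i\go_j$ telescopes as $\prod_{i\le k<j}\go_k\go_{k+1}$. Your packaging of the computation via the i.i.d.\ pairs $(\xi_i,\gt_i)$ is just a slightly more explicit bookkeeping of the same factorization the paper performs by first conditioning on $\go$.
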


\begin{proof}[Proof of Lemma~\ref{lem:comp-prod-go-Qu}]
Let us start with~\eqref{lem:comp-prod-go-QuA}. By conditioning on $\go$ and using~\eqref{eq:cond-law-Qu0},
\beq
\ba
\int \Phi(\go_i\go_{i+1}) \ind_{\{L>i\}} \dd Q_u &= \int \Phi(\go_i\go_{i+1}) \prod_{1\le j \le i} (1-\tfrac12 e^{u \go_j \go_{j+1}}) \dd \bbP,\\
\int \Phi(\go_i\go_{i+1}) \ind_{\{L=i\}} \dd Q_u &= \int \Phi(\go_i\go_{i+1})  \prod_{1\le j < i} (1-\tfrac12 e^{u \go_j \go_{j+1}}) \tfrac12 e^{u \go_i \go_{i+1}} \dd \bbP.
\ea
\eeq
We conclude by observing that the random variables $(\go_i\go_{i+1})_{i\ge 1}$ are independent and uniformly distributed in $\{-1,1\}$. The equality in~\eqref{lem:comp-prod-go-QuB} is proven in a similar fashion, once we notice that $\go_i \go_j  = \prod_{i\le k < j} \go_k \go_{k+1}$.
\end{proof}
\subsubsection{An upper bound}
In this section we derive several upper bounds on the quenched free energy through \emph{annealing}. We start with the following: 
\begin{proposition}
\label{pr:annealed-bound}
For every $\gb\ge 0$,
\beq
\label{eq:annealed-bound}
\bar F_{\rm que}(\gb)\le \bar F_{\rm ann}(\gb) := - \sup\Big\{f\ge 0\colon \sum_{\ell\ge 1} (e^f/2)^{\ell} \bbE(e^{-\gb \gO_\ell^2}) < 1\Big\}\in [-\log 2, 0].
\eeq
\end{proposition}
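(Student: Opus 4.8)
The plan is to prove the annealed upper bound $\bar F_{\rm que}(\gb) \le \bar F_{\rm ann}(\gb)$ via Jensen's inequality applied to the grand canonical partition function, and then to identify the annealed free energy with the threshold defined on the right-hand side. First I would recall from Theorem~\ref{thm:que-fe} that $\bar F_{\rm que}(\gb) = \sup_{n\ge 1} \frac1n \bbE \log \bar Z_{n,c}^{\gb,\go}$, and that the free energy equals the infimum of those $f$ for which the grand canonical sum $\cZ_{\gb,\go}(f) = \sum_{n\ge 1} \bar Z_{n,c}^{\gb,\go} e^{-fn}$ converges (equation~\eqref{eq:bar-f-que-gcpf}). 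By Jensen's inequality, for each $n$ one has $\bbE \log \bar Z_{n,c}^{\gb,\go} \le \log \bbE \bar Z_{n,c}^{\gb,\go}$, so the quenched free energy is dominated by the \emph{annealed} free energy $\bar F_{\rm ann}(\gb) := \lim_{n\to\infty} \frac1n \log \bbE \bar Z_{n,c}^{\gb,\go}$ (whose existence follows from superadditivity of $\bbE \log$, or more directly from the renewal structure below).

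The core of the proof is to compute the annealed partition function explicitly. Interchanging the expectation over charges $\bbE$ with the random walk (renewal) expectation, and using the renewal decomposition~\eqref{eq:bar-pf2} together with the independence of the charges across disjoint renewal blocks, the annealed constrained partition function factorizes over the inter-arrival gaps. Concretely, writing $\cZ_{\rm ann}(f) := \sum_{n\ge 1} \bbE \bar Z_{n,c}^{\gb,\go}\, e^{-fn}$ and summing over the number $k$ of renewal increments as in~\eqref{eq:from-Z-f-to-W-k}, I would obtain a geometric-type series
\beq
\cZ_{\rm ann}(f) = \sum_{k\ge 1} \Big( \sum_{\ell\ge 1} (e^{-f}/2)^{\ell}\, \bbE(e^{-\gb \gO_\ell^2}) \Big)^k,
\eeq
exactly as in the computation of~\eqref{eq:grand-can-pf-wsaw} for the weakly self-avoiding walk, but now keeping the charge factor $\bbE(e^{-\gb \gO_\ell^2})$ inside each block. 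The factor $2^{-\ell}$ comes from the $\geo(1/2)$ inter-arrival law, the factor $e^{-f\ell}$ from the tilt, and the charges in distinct blocks are independent because the cumulated-charge terms $\gO(\tau_{i-1},\tau_i]$ depend on disjoint sets of letters. This geometric series converges if and only if its ratio is strictly less than one, i.e.\ $\sum_{\ell\ge 1} (e^{-f}/2)^{\ell} \bbE(e^{-\gb \gO_\ell^2}) < 1$. Setting $f \mapsto -f$ to match the sign convention of the statement (where the supremum is over $f\ge 0$ and the free energy is $-\sup$), this identifies $\bar F_{\rm ann}(\gb)$ with the claimed threshold.

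The remaining point is to verify the two-sided bound $\bar F_{\rm ann}(\gb)\in[-\log 2,0]$. The upper bound $\bar F_{\rm que}(\gb)\le 0$ is already known from Theorem~\ref{thm:que-fe}, and the annealed free energy also satisfies $\bar F_{\rm ann}(\gb)\le 0$ since each factor $\bbE(e^{-\gb\gO_\ell^2})\le 1$ forces the threshold $f$ to be nonnegative. For the lower bound, I would note that $\bar F_{\rm ann}(\gb) \ge \bar F_{\rm que}(\gb) \ge -\log 2$ by Proposition~\ref{pr:prop-fe}; alternatively it follows directly from the threshold criterion by testing the divergence of the series at $f = \log 2$. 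The main obstacle, though purely bookkeeping rather than conceptual, is handling the $\pm1$ boundary exponents carefully: one must check that the constrained partition function $\bar Z_{n,c}^{\gb,\go}$ rather than $\bar Z_n^{\gb,\go}$ gives precisely the clean geometric series (the constraint $n\in\tau$ closes the last block), and that the convention $\bar Z_0^{\gb,\go}=2$ does not spuriously enter the renewal sum. Once the factorization is established, the identification of the threshold and the final interval membership are immediate.
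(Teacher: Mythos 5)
Your proposal is correct and follows essentially the same route as the paper: Jensen's inequality for the quenched--annealed comparison, factorization of the annealed partition function over the renewal blocks (which, for this directed walk, is the same as the paper's factorization over sites and their local times), the resulting geometric grand-canonical series whose convergence threshold gives the stated formula, and the divergence of the series at $f=\log 2$ (via $\bbE(e^{-\gb\gO_\ell^2})\ge\bbP(\gO_\ell=0)\sim \cst\,\ell^{-1/2}$) for the lower bound $-\log 2$. The only cosmetic difference is that you work with the constrained partition function $\bar Z^{\gb,\go}_{n,c}$ throughout, which removes the boundary factor $2$ that the paper carries along explicitly.
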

\begin{proof}[Proof of Proposition~\ref{pr:annealed-bound}]
For ease of notation, let us set
\beq
g_\gb(\ell) := \bbE( e^{-\gb \gO_\ell^2} ) = \bbE( e^{-\gb (\go_1 + \ldots + \go_\ell)^2} ),
\qquad \ell\ge 1,
\eeq
with the convention $g_\gb(0) = 1$.
Using the i.i.d.\ assumption on the charges, the annealed partition function equals
\beq
\bbE \bar Z_n^{\gb,\go} = \bE\Big[ \prod_{x\in \bbN} g_\gb(\ell_n(x))\Big],  
\qquad
\textrm{with }
\ell_n(x) := \card\{1\le k\le n\colon S_k = x\}.
\eeq
Using the geometric distribution of the time spent on each visited vertex, we readily obtain
\beq
\label{eq:ann-grand-can-pf}
\bbE \bar Z_n^{\gb,\go} = 2\sum_{k=1}^n \sumtwo{\ell_1, \ldots, \ell_k \ge 1}{\ell_1 + \ldots + \ell_k = n} \prod_{i=1}^k g_\beta(\ell_i) 2^{-\ell_i},
\eeq
from which we deduce the annealed grand canonical partition function
\beq
\label{eq:ann-grand-can-pf2}
\sum_{n\ge 1} \bbE \bar Z_n^{\gb,\go} e^{fn} = 2\sum_{k \ge 1} 
\Big[ \sum_{\ell \ge 1} (e^f/2)^{\ell} g_\gb(\ell) \Big]^k.
\eeq
The extra factor $2$ in front of the sums in~\eqref{eq:ann-grand-can-pf} and~\eqref{eq:ann-grand-can-pf2} comes from the fact that $S_1=1$. Thus, the annealed free energy, which could alternatively be defined as 
\beq
\label{eq:ann-fe}
\bar F_{\rm ann}(\gb) = \limsup_{n\to \infty} \frac 1n \log \bbE \bar Z_n^{\gb,\go},
\eeq
has the variational representation displayed in~\eqref{eq:annealed-bound}. The first inequality therein follows from Theorem~\ref{thm:que-fe} and the standard Jensen bound:
\beq
\bbE \log \bar Z_n^{\gb,\go} \le \log \bbE \bar Z_n^{\gb,\go}.
\eeq
The fact that the annealed free energy is non-positive clearly follows from~\eqref{eq:ann-fe}. The fact that it is bounded from below by $(-\log 2)$ follows by checking that
\beq
\sum_{\ell \ge 1} (e^
{\log 2}/2)^{\ell} g_\gb(\ell) = \sum_{\ell \ge 1} g_\gb(\ell) \ge \sum_{\ell \in 2\bbN} \bbP(\gO_\ell = 0) = +\infty,
\eeq
since $\bbP(\gO_\ell = 0) \sim \cst\, \ell^{-1/2}$ as $\ell\to\infty$ along even integers.
\end{proof}
We first use~Proposition~\ref{pr:annealed-bound} to derive a high-temperature upper bound.
\begin{proposition}[Binary charges]
\label{pr:annealed-bound-hightemp}
As $\gb \to 0$,
\beq
\bar F_{\rm que}(\gb) \le \bar F_{\rm ann}(\gb) \le -\gb + 2\gb^2 + o(\gb^2).
\eeq
\end{proposition}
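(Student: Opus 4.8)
The first inequality $\bar F_{\rm que}\le\bar F_{\rm ann}$ is already contained in Proposition~\ref{pr:annealed-bound} (the Jensen bound), so the entire task is the high-temperature upper bound on the \emph{annealed} free energy. The plan is to work directly from the variational representation~\eqref{eq:annealed-bound}. Writing $\bar F_{\rm ann}(\gb)=-f^*(\gb)$, the threshold $f^*(\gb)$ is the value at which $\Phi(f,\gb):=\sum_{\ell\ge1}(e^f/2)^\ell g_\gb(\ell)$ crosses $1$, with $g_\gb(\ell)=\bbE(e^{-\gb\gO_\ell^2})$ as in the proof of Proposition~\ref{pr:annealed-bound}. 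Since $\Phi$ is continuous and strictly increasing in $f$, with $\Phi(0,0)=\sum_\ell 2^{-\ell}=1$ and $g_\gb\le1$, the threshold is characterized by $\Phi(f^*,\gb)=1$ (one checks $\Phi(\cdot,\gb)\to\infty$ at the edge of its domain using $g_\gb(\ell)\ge\bbP(\gO_\ell=0)\sim\cst\,\ell^{-1/2}$, so the root exists and lies in $(0,\log 2]$). It is convenient to set $\lambda=e^f/2$ and to study the root $\lambda^*(\gb)$ of $\Psi(\lambda,\gb):=\sum_{\ell\ge1}\lambda^\ell g_\gb(\ell)=1$, with $\lambda^*(0)=1/2$, and then to expand $f^*=\log(2\lambda^*)$ to second order in $\gb$.

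The only disorder input I need is a one-sided Taylor estimate. Using $e^{-s}\le 1-s+\tfrac{s^2}{2}$ for $s\ge0$ together with the binary-charge moments $\bbE(\gO_\ell^2)=\ell$ and $\bbE(\gO_\ell^4)=3\ell^2-2\ell$, I obtain
\[ g_\gb(\ell)\le 1-\gb\ell+\tfrac{\gb^2}{2}(3\ell^2-2\ell). \]
Summing against $\lambda^\ell$ with the classical power sums $\sum_\ell\ell\lambda^\ell=\lambda/(1-\lambda)^2$ and $\sum_\ell\ell^2\lambda^\ell=\lambda(1+\lambda)/(1-\lambda)^3$ yields an explicit rational majorant $\bar\Psi(\lambda,\gb)\ge\Psi(\lambda,\gb)$ on $(0,1)$. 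Both functions are increasing in $\lambda$, so if $\lambda_0$ solves $\bar\Psi(\lambda_0,\gb)=1$ then $\Psi(\lambda_0,\gb)\le1$, hence $\lambda^*(\gb)\ge\lambda_0$ and $\bar F_{\rm ann}(\gb)=-\log(2\lambda^*)\le-\log(2\lambda_0)$. Thus it suffices to locate the root $\lambda_0(\gb)$ of the \emph{explicit} equation $\bar\Psi=1$ and to expand it; the one-sided Taylor bound guarantees $\lambda_0$ agrees with $\lambda^*$ through order $\gb^2$, which is all that is needed for the claimed inequality.

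To extract $\lambda_0(\gb)$ I insert the ansatz $\lambda_0=\tfrac12+c_1\gb+c_2\gb^2+O(\gb^3)$ into $\bar\Psi=1$ and match orders. The zeroth order reproduces $\lambda_0(0)=1/2$; the first order gives $c_1=1/2$; the second order, after using $\sum_\ell\ell 2^{-\ell}=2$ and $\sum_\ell\ell^2 2^{-\ell}=6$, gives $c_2=-3/4$. Feeding this into $\log(2\lambda_0)$,
\[ \lambda_0(\gb)=\tfrac12+\tfrac12\gb-\tfrac34\gb^2+O(\gb^3),\qquad f_0:=\log(2\lambda_0)=\gb-2\gb^2+O(\gb^3), \]
whence $\bar F_{\rm ann}(\gb)\le -f_0=-\gb+2\gb^2+O(\gb^3)=-\gb+2\gb^2+o(\gb^2)$, which is the assertion (the computation in fact shows this bound is sharp to this order, so the same manipulation applied to $\Psi$ itself, e.g.\ via the implicit function theorem, would give equality).

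The one genuinely delicate point is the second-order matching. The coefficient $c_2$ is \emph{not} obtained by freezing $\lambda=1/2$ inside the generating function: the first-order shift $c_1=1/2$ of the root interacts with the linear term $-\gb\,\lambda/(1-\lambda)^2$, whose $\lambda$-derivative at $1/2$ equals $12$, producing a cross contribution $-12c_1\gb^2$ that is comparable to the genuine $\gb^2$ term coming from $\tfrac{\gb^2}{2}(3\ell^2-2\ell)$. Omitting this feedback would give the wrong constant (one would find $c_2=-9/4$ and a spurious coefficient $-5$ instead of $2$). Keeping track of this cross term—equivalently, differentiating the implicit relation $\bar\Psi(\lambda_0(\gb),\gb)=1$ rather than expanding termwise at the unperturbed root—is the step requiring care; everything else is bookkeeping. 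The termwise manipulations are legitimate because $g_\gb(\ell)$, $\partial_\gb g_\gb(\ell)$ and $\partial_\gb^2 g_\gb(\ell)$ are bounded by polynomials in $\ell$ (the moments $\bbE(\gO_\ell^{2k})$ being $O(\ell^k)$), so all the series and their $\lambda$-derivatives converge uniformly for $\lambda$ in a neighbourhood of $1/2$.
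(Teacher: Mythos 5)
Your proof is correct and follows essentially the same route as the paper's: the variational representation from Proposition~\ref{pr:annealed-bound}, the one-sided bound $e^{-y}\le 1-y+\tfrac12 y^2$ with $\bbE(\gO_\ell^4)=3\ell^2-2\ell$, and a second-order expansion of the root near $z=1/2$, yielding the same $z_0=\tfrac12+\tfrac12\gb-\tfrac34\gb^2$ and hence $-\gb+2\gb^2+o(\gb^2)$. The only difference is presentational: the paper packages the root-finding into a cubic polynomial criterion (Lemmas~\ref{lem:ann-bound} and~\ref{lem:deg3-polynomial}) whereas you expand the implicit relation for the generating function directly, correctly keeping the cross term that you rightly flag as the delicate step.
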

\begin{proposition}[standard Gaussian charges]
\label{pr:annealed-bound-hightempG}
As $\gb \to 0$,
\beq
\bar F_{\rm que}(\gb) \le \bar F_{\rm ann}(\gb) \le -\gb + 3\gb^2 + o(\gb^2).
\eeq
\end{proposition}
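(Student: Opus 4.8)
The plan is to read off the asymptotics of $\bar F_{\rm ann}(\gb)$ directly from the variational characterisation in Proposition~\ref{pr:annealed-bound}. Set $f^*(\gb):=-\bar F_{\rm ann}(\gb)$ and, for $a\in(0,1)$, write
\beq
\Phi(a,\gb):=\sum_{\ell\ge 1} a^\ell\, g_\gb(\ell), \qquad g_\gb(\ell)=\bbE\big(e^{-\gb\gO_\ell^2}\big).
\eeq
For fixed $\gb>0$ the map $a\mapsto\Phi(a,\gb)$ is continuous and strictly increasing on $(0,1)$, satisfies $\Phi(1/2,\gb)<1$ (since $g_\gb(\ell)<1$) and blows up as $a\uparrow 1$ (because $g_\gb(\ell)\sim\cst\,(2\gb\ell)^{-1/2}$ makes $\sum_\ell a^\ell g_\gb(\ell)$ diverge at $a=1$). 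Hence there is a unique $a^*(\gb)\in(1/2,1)$ with $\Phi(a^*,\gb)=1$, the supremum in~\eqref{eq:annealed-bound} is attained, and $f^*(\gb)=\log(2a^*(\gb))$. So the whole problem reduces to expanding the implicitly defined critical point $a^*(\gb)$ for small $\gb$.

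The second step is the Gaussian-specific input: since $\gO_\ell$ is centred Gaussian with variance $\ell$, the identity $\bbE(e^{-t Z^2})=(1+2t)^{-1/2}$ gives the exact expression $g_\gb(\ell)=(1+2\gb\ell)^{-1/2}$. At $\gb=0$ one has $\Phi(a,0)=a/(1-a)$, whose root of $\Phi=1$ is $a=1/2$; thus $a^*(0)=1/2$ and $a^*(\gb)\to 1/2$ as $\gb\to 0$. The crucial structural feature is that, because the underlying inter-arrival law is geometric (light-tailed), the critical point stays strictly inside the radius of convergence: $a^*(\gb)$ remains in a fixed compact subinterval of $(0,1)$ around $1/2$ for small $\gb$. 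Consequently every moment sum $S_k(a):=\sum_{\ell\ge 1}\ell^k a^\ell$ converges and is smooth near $a=1/2$, with the closed forms $S_0(a)=a/(1-a)$, $S_1(a)=a/(1-a)^2$, $S_2(a)=a(1+a)/(1-a)^3$.

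I would then Taylor-expand $(1+2\gb\ell)^{-1/2}=1-\gb\ell+\tfrac32\gb^2\ell^2+O(\gb^3\ell^3)$ inside the sum, which is legitimate by the previous remark (the Lagrange remainder is dominated by $\cst\,\gb^3\ell^3$ and $\sum_\ell\ell^3 a^\ell$ stays bounded), obtaining
\beq
\Phi(a,\gb)=S_0(a)-\gb\,S_1(a)+\tfrac32\gb^2 S_2(a)+O(\gb^3)
\eeq
uniformly for $a$ near $1/2$. Writing $a^*(\gb)=\tfrac12+a_1\gb+a_2\gb^2+o(\gb^2)$, substituting the closed forms for $S_0,S_1,S_2$ (the relevant values being $S_0(1/2)=1$, $S_0'(1/2)=4$, $S_1(1/2)=2$, $S_2(1/2)=6$), and matching the coefficients of $\gb$ and $\gb^2$ in $\Phi(a^*,\gb)=1$ yields $a_1=\tfrac12$ and $a_2=-\tfrac54$. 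Converting back through $f^*=\log(2a^*)$ gives $f^*(\gb)=\gb-3\gb^2+o(\gb^2)$, i.e. $\bar F_{\rm ann}(\gb)=-\gb+3\gb^2+o(\gb^2)$; combined with $\bar F_{\rm que}\le\bar F_{\rm ann}$ from Proposition~\ref{pr:annealed-bound} this is the claim.

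The main obstacle is purely the rigour of this expansion: one must (i) establish the a priori fact $a^*(\gb)\to 1/2$, so that all sums stay away from their singularity at $a=1$, and (ii) justify expanding the series termwise in $\gb$. Both are handled cleanly by the implicit function theorem applied to $\Phi$ at $(a,\gb)=(1/2,0)$: there $\Phi=1$ and $\partial_a\Phi(1/2,0)=S_0'(1/2)=4\neq 0$, while the uniform (geometric) domination of the series and its $a$- and $\gb$-derivatives on compact subsets of $(0,1)$ makes $\Phi$ a $C^2$ function of $(a,\gb)$; the IFT then produces a $C^2$ branch $a^*(\gb)$ with $a^*(0)=1/2$ whose Taylor coefficients are exactly those computed above. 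Everything else is the routine coefficient bookkeeping already sketched, and the same scheme with $g_\gb(\ell)=1-\gb\ell+(\tfrac32\ell^2-\ell)\gb^2+\cdots$ reproduces the binary coefficient $2$ of Proposition~\ref{pr:annealed-bound-hightemp}, a useful consistency check.
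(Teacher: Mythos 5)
Your argument is correct, and it arrives at the same coefficient $3$ as the paper, but by a somewhat different route. The paper's proof is a one-line reduction to the binary case: it bounds $g_\gb(\ell)=\bbE(e^{-\gb\gO_\ell^2})$ from above via the elementary inequality $e^{-y}\le 1-y+\tfrac12 y^2$ together with $\bbE(\gO_\ell^2)=\ell$, $\bbE(\gO_\ell^4)=3\ell^2$ (giving $\sfc_1=-\gb$, $\sfc_2=\tfrac32\gb^2$), then feeds the resulting quadratic-in-$\ell$ majorant into Lemma~\ref{lem:ann-bound}, which converts the condition $\sum_\ell z^\ell(1+\sfc_1\ell+\sfc_2\ell^2)<1$ into a cubic polynomial inequality whose relevant root is expanded once and for all in Lemma~\ref{lem:deg3-polynomial}. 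This only ever produces an \emph{upper} bound on $\bar F_{\rm ann}$, which is all the statement requires, and it sidesteps any discussion of termwise differentiation of the series. You instead exploit the exact Gaussian identity $g_\gb(\ell)=(1+2\gb\ell)^{-1/2}$ and apply the implicit function theorem directly to $\Phi(a,\gb)=\sum_\ell a^\ell g_\gb(\ell)$ at $(1/2,0)$; the price is that you must justify the $C^2$ regularity of $\Phi$ and the uniform control of the Taylor remainder (which you do, correctly, using that the critical point stays in a compact subinterval of $(0,1)$), and the reward is the sharper conclusion $\bar F_{\rm ann}(\gb)=-\gb+3\gb^2+o(\gb^2)$ with asymptotic \emph{equality} rather than just $\le$. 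Your coefficient bookkeeping checks out ($a_1=\tfrac12$, $a_2=-\tfrac54$, matching the paper's $z(\gb)=\tfrac12+\tfrac12\gb-\tfrac34\gb^2+o(\gb^2)$ only at first order because the paper's $\sfc_1$ differs in the binary case), and your consistency check against the binary coefficient $2$ is also correct.
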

\begin{proof}[Proof of Proposition~\ref{pr:annealed-bound-hightemp}]
Recall the expression of the annealed free energy in~\eqref{eq:annealed-bound}. Replacing the $(e^{f}/2)$ therein by $z < 1$, using the inequality $e^{-y} \le 1-y + \tfrac12 y ^2$ for every $y\ge 0$, and using Lemma~\ref{lem:fourth-mom}, we have
\beq
\sum_{\ell\ge 1} z^{\ell} \bbE(e^{-\gb \gO_\ell^2}) \le \sum_{\ell\ge 1} z^{\ell}
[1 - (\gb+\gb^2)\ell + \tfrac32 \gb^2 \ell^2].
\eeq
Using Lemma~\ref{lem:ann-bound} with $\sfc_1(\gb) = -\gb - \gb^2$ and $\sfc_2(\gb) = \tfrac32 \gb^2$, we obtain
\beq
\label{eq:suff-cond}
2z^3 + ({\sf u}_1(\gb) - 5)z^2 + ({\sf u}_2(\gb)+4)z - 1 < 0 \Rightarrow \sum_{\ell\ge 1} z^{\ell} \bbE(e^{-\gb \gO_\ell^2}) < 1,
\eeq
where
\beq
\sfu_1(\gb) = \gb + \tfrac52 \gb^2, \qquad
\sfu_2(\gb) = -\gb + \tfrac12 \gb^2.
\eeq
Using Lemma~\ref{lem:deg3-polynomial}, we see that the condition on the l.h.s.\ of~\eqref{eq:suff-cond} is satisfied if
\beq
z < z(\gb) := z_0(\sfu_1(\gb), \sfu_2(\gb)) = \tfrac12 + \tfrac12 \gb - \tfrac34 \gb^2  + o(\gb^2).
\eeq
Therefore,
\beq
\bar F_{\rm ann}(\gb) \le - \log(2z(\gb)) = - \gb + 2\gb^2 + o(\gb^2).
\eeq
\end{proof}
\begin{proof}[Proof of Proposition~\ref{pr:annealed-bound-hightempG}]
Same as Proposition~\ref{pr:annealed-bound-hightemp} with $\bbE(\gO_\ell^4) = 3\ell^2$, hence $\sfc_1(\gb)= -\gb$ and $\sfc_2(\gb) = \tfrac32 \gb^2$.
\end{proof}
\subsection{Low-temperature estimate on the free energy}
\label{sec:low-temp-FE}
We may also use Proposition~\ref{pr:annealed-bound} to derive \emph{low-temperature} upper bounds.
\begin{proposition}[Gaussian case]
\label{lem:ann-bound-low-temp}
As $\gb \to \infty$,
\beq
\bar F_{\rm que}(\gb)\le \bar F_{\rm ann}(\gb) = - \log 2 + \frac{\pi}{2\gb}(1+o(1)),
\eeq
\end{proposition}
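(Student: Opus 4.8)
The plan is to extract the sharp constant directly from the variational characterisation of the annealed free energy in Proposition~\ref{pr:annealed-bound}. First I would record that for standard Gaussian charges $\gO_\ell$ is centred Gaussian with variance $\ell$, so that
\[
g_\gb(\ell) := \bbE(e^{-\gb\gO_\ell^2}) = \frac{1}{\sqrt{1+2\gb\ell}}, \qquad \ell \geq 1.
\]
Writing $z = e^f/2 = e^{-s}$ with $s = \log 2 - f$, the function $\Phi(s) := \sum_{\ell\geq 1} e^{-s\ell} g_\gb(\ell)$ is continuous and strictly decreasing on $(0,\log 2)$, with $\Phi(0^+) = +\infty$ (the summand decays like $\ell^{-1/2}$) and $\Phi(\log 2) < 1$; hence there is a unique $s^\ast = s^\ast(\gb) \in (0,\log 2)$ with $\Phi(s^\ast) = 1$, and by Proposition~\ref{pr:annealed-bound} one has $\bar F_{\rm ann}(\gb) = -\log 2 + s^\ast(\gb)$. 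The whole problem then reduces to proving $s^\ast(\gb) = \frac{\pi}{2\gb}(1+o(1))$ as $\gb \to \infty$.

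The dominant contribution to $\Phi(s)$ for small $s$ and large $\gb$ comes from $\ell$ of order $1/s \sim \gb$, where $2\gb\ell \gg 1$ and one may safely drop the $1$ in $1+2\gb\ell$. Concretely I would use the elementary two-sided bound, valid for $\gb \geq 1/2$ and all $\ell \geq 1$,
\[
\frac{1}{\sqrt{2\gb}\,\sqrt{1+\tfrac{1}{2\gb}}}\,\frac{1}{\sqrt\ell} \;\leq\; g_\gb(\ell) \;\leq\; \frac{1}{\sqrt{2\gb}}\,\frac{1}{\sqrt\ell},
\]
which factors the $\gb$-dependence out and leaves the sum $\sum_{\ell\geq 1}\ell^{-1/2}e^{-s\ell}$. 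Since $x\mapsto x^{-1/2}e^{-sx}$ is decreasing on $(0,\infty)$, comparison with the integral $\int_0^\infty x^{-1/2}e^{-sx}\,\dd x = \sqrt{\pi/s}$ yields
\[
\sqrt{\tfrac{\pi}{s}} - 2 \;\leq\; \sum_{\ell\geq 1}\frac{e^{-s\ell}}{\sqrt\ell} \;\leq\; \sqrt{\tfrac{\pi}{s}},
\]
so that $\sum_{\ell\geq1}\ell^{-1/2}e^{-s\ell} = \sqrt{\pi/s}\,(1+o(1))$ as $s\to 0$. Combining the two displays gives the asymptotics $\Phi(s) = \sqrt{\pi/(2\gb s)}\,(1+o(1))$.

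Finally I would pin down $s^\ast$ by monotonicity: fix $\epsilon>0$ and test $s_\pm = (1\pm\epsilon)\frac{\pi}{2\gb}$. Both tend to $0$ as $\gb\to\infty$, so the asymptotics above give $\Phi(s_+) = (1+\epsilon)^{-1/2}(1+o(1)) < 1$ and $\Phi(s_-) = (1-\epsilon)^{-1/2}(1+o(1)) > 1$ for all large $\gb$. Since $\Phi$ is decreasing and $\Phi(s^\ast)=1$, this forces $(1-\epsilon)\frac{\pi}{2\gb} < s^\ast < (1+\epsilon)\frac{\pi}{2\gb}$ eventually, and letting $\epsilon\downarrow 0$ gives $s^\ast = \frac{\pi}{2\gb}(1+o(1))$, whence $\bar F_{\rm ann}(\gb) = -\log 2 + \frac{\pi}{2\gb}(1+o(1))$; the inequality $\bar F_{\rm que}\leq\bar F_{\rm ann}$ is again Proposition~\ref{pr:annealed-bound}. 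The only delicate point is bookkeeping the errors: one must make sure the $O(1/\gb)$ coming from the prefactor and the $o(1)$ coming from the integral comparison are controlled when $s$ is slaved to $\gb$ via $s\sim\pi/(2\gb)$, but since both corrections are genuinely of lower order this presents no real difficulty.
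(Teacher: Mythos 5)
Your proposal is correct and follows essentially the same route as the paper: both start from the variational characterisation in Proposition~\ref{pr:annealed-bound}, compute $\bbE(e^{-\gb\gO_\ell^2})=(1+2\gb\ell)^{-1/2}$, sandwich this between $(2\gb\ell)^{-1/2}$ and $(2\gb\ell)^{-1/2}(1+(2\gb)^{-1})^{-1/2}$, and identify the limiting value $\sqrt{\pi/(2C)}$ of the generating series at $f=\log 2-C/\gb$ by comparison with $\int_0^\infty(2x)^{-1/2}e^{-Cx}\,\dd x$. Your explicit monotonicity argument for locating the root $s^\ast$ is a slightly more careful write-up of the paper's concluding step, but not a different method.
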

\begin{proof}[Proof of Proposition~\ref{lem:ann-bound-low-temp}]
The inequality has already been proved so we focus on the low-temperature expansion.
Recall the expression of the annealed free energy in~\eqref{eq:annealed-bound}.
In the case of standard Gaussian charges, we can explicitely compute
\beq
\bbE(e^{-\gb \gO_\ell^2}) = \frac{1}{\sqrt{1+2\gb \ell}}.
\eeq
Considering $f(\gb) := \log 2 - C/\gb$, with $C>0$, we obtain
\beq
\sum_{\ell\ge 1} (e^{f(\gb)}/2)^{\ell} \bbE(e^{-\gb \gO_\ell^2}) 
= \sum_{\ell\ge 1} \frac{e^{-C\ell / \gb}}{\sqrt{1+2\gb \ell}}.
\eeq
Dropping one in the square root above yields the following upper bound:
\beq
\sum_{\ell\ge 1} (e^{f(\gb)}/2)^{\ell} \bbE(e^{-\gb \gO_\ell^2}) \le 
\sum_{\ell\ge 1} \frac{e^{-C\ell / \gb}}{\sqrt{2\gb \ell}} = \frac{1}{\gb} \sum_{\ell\ge 1} \frac{e^{-C\ell / \gb}}{\sqrt{2\ell/\gb}},
\eeq
which converges in the large $\gb$ limit, by a Riemann approximation, to
\beq
\int_0^{\infty} (2x)^{-1/2}e^{-Cx}\dd x = \frac{1}{\sqrt{C}} \int_0^{\infty} (2x)^{-1/2}e^{-x}\dd x \stackrel{(y^2=2x)}{=} \sqrt{\frac{\pi}{2C}}.
\eeq
The upper bound that we have just used can be complemented by the following lower bound:
\beq
\frac{e^{-C\ell / \gb}}{\sqrt{1+2\gb \ell}} = \frac{e^{-C\ell / \gb}}{\sqrt{2\gb \ell}} \times \frac{1}{\sqrt{1 + (2\gb\ell)^{-1}}} \ge \frac{e^{-C\ell / \gb}}{\sqrt{2\gb \ell}} \times \frac{1}{\sqrt{1 + (2\gb)^{-1}}}, \qquad \ell \ge 1,
\eeq
which allows to complete the proof.
\end{proof}
\begin{remark}[Binary case]
The annealed upper bound obtained in the case of binary charges is quite rough. Indeed, one can show that
\beq
\label{eq:low-temp-ann-limit}
\lim_{\gb\to\infty} \bar F_{\rm ann}(\gb) = -\frac{\log 3}2 > -\log 2.
\eeq
Existence follows from monotonicity. In order to obtain the value of the limit, we first observe that
\beq
\label{eq:low-temp-ann-limit2}
0\le \sum_{\ell \ge 1} z^\ell \bbE(e^{-\gb \gO_\ell^2}) - \sum_{\ell \ge 1} z^{2\ell} \bbP(\gO_{2\ell} = 0) \le \frac{e^{-\gb}z}{1-z}, \qquad 0\le z < 1.
\eeq
Indeed, by splitting the leftmost sum according to the parity of $\ell$, we obtain:
\beq
\ba
0\le &\sumtwo{\ell \ge 1}{\ell \in 2\bbZ} z^\ell \bbE(e^{-\gb \gO_\ell^2}) - \sumtwo{\ell \ge 1}{\ell \in 2\bbZ} z^\ell \bbP(\gO_\ell=0) \le e^{-\gb} \sumtwo{\ell \ge 1}{\ell \in 2\bbZ} z^\ell,\\
0\le &\sumtwo{\ell \ge 1}{\ell \in 2\bbZ+1} z^\ell \bbE(e^{-\gb \gO_\ell^2}) \le e^{-\gb} \sumtwo{\ell \ge 1}{\ell \in 2\bbZ+1} z^\ell,
\ea
\eeq
from which~\eqref{eq:low-temp-ann-limit2} readily follows. The rightmost sum in~\eqref{eq:low-temp-ann-limit2} can be computed explicitly, see e.g.~\cite[Chapter I, E2]{Spitzer76-book}
\beq
\sum_{\ell \ge 1} z^{2\ell} \bbP(\gO_{2\ell} = 0) = \frac{1}{\sqrt{1-z^2}} - 1, \qquad |z|<1.
\eeq
Combining the above expression with~\eqref{eq:low-temp-ann-limit2} and~\eqref{eq:annealed-bound} gives~\eqref{eq:low-temp-ann-limit}. 
The main contribution to the annealed partition function, in the large $\beta$ limit, is thus given by charge distributions such that $\gO_n(x) = 0$ for every $x\in \{S_1,\ldots, S_n\}$. This indicates that any attempt to improve the annealed bound by means of \emph{constrained annealing} with a linear potential, that is 
\beq
\bbE \log \bar Z_n^{\gb,\go} \le \log \bbE [\bar Z_n^{\gb,\go} \exp(\gl \gO_n)], \qquad \gl\in\bbR,
\eeq
(also known as first order Morita approximation) is bound to fail. See e.g.\ ~\cite{CG2005-ecp} for a reference on constrained annealing in the context of pinning 
\end{remark}
\section{Corrections to the undirected model}
\label{sec:corrections}
In this section we correct two propositions from~\cite[Appendix D]{CdHPP16} concerning the \emph{undirected} quenched charged polymer. The statement of the first proposition therein (Proposition D.1) is left unchanged, although we stress that the result holds in any dimension. The proof however must be corrected. In the second proposition (Proposition D.2), the sufficient condition for ballistic behaviour in dimension one is amended.
\par Although this section is independent of the rest of the paper, we prefer to stick to our notation (that is anyway not far from the one used in~\cite{CdHPP16}) in order to make the paper more self-contained. The two major differences with the directed model from Section~\ref{sec:DGH-model} are the following:
\begin{itemize}
\item $\bP$ denotes the law of \emph{simple random walk} $S=(S_i)_{i\in\bbN_0}$ (started at the origin) on $\bbZ^d$, i.e.\ $(S_i-S_{i-1})_{i\in\bbN}$ is a sequence of i.i.d.\ random variables uniformly distributed on the $2d$ unit vectors.
\item $\bbP$ denotes the law of i.i.d.\ real-valued random variables $\go = (\go_i)_{i\in \bbN}$ with zero mean, unit variance and finite exponential moments, i.e.\ $M(\gd):= \bbE(e^{\gd \go_1}) < \infty$ for every $\gd\ge 0$. Here, we also allow \emph{biased} charge distributions: for every $\gd\ge0$, we let $\bbP_\gd$ be the \emph{tilted} probability measure uniquely defined by the property:
\beq
\bbP_\gd(A) = \bbE(e^{\gd \gO_n - n \log M(\gd)} \ind_A), \qquad \forall n\in\bbN,\quad \forall A\in \gs(\go_1,\ldots,\go_n).
\eeq
Note that the random variables $(\go_i)_{i\in \bbN}$ remain i.i.d.\ under $\bbP_\gd$ and that $\bbP_0 = \bbP$.
\end{itemize} 
In addition, we let
\beq
R_n(S) := \card\{S_1, \ldots, S_n\}
\eeq
be the number of distinct vertices visited by the random walk between time $1$ and time $n$.

\par The proposition below corresponds to~\cite[Proposition D.1]{CdHPP16}. The lower bound in the proof must be corrected. Also, we stress that the result is valid in any dimension.
\begin{proposition}
\label{pr:1}
Let $d\ge 1$.
Suppose that $\delta,\beta \in (0,\infty)$. Then there exist $c_1, c_2>0$ 
(depending on $\delta,\beta$) such that, for $\bbP_\delta$-a.e.\ $\omega$,
\begin{equation}
\label{eq:1}
\bar \bP_n^{\beta,\go}(R_n(S) \leq c_1 n) \leq e^{-c_2 n+o(n)}.
\end{equation}
\end{proposition}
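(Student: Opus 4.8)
The plan is to write the target probability as a ratio of partition functions,
\[
\bar\bP_n^{\beta,\go}(R_n(S)\le c_1 n)=\frac{\bE\big[e^{-\beta\sum_{x}\gO_n(x)^2}\,\ind_{\{R_n(S)\le c_1 n\}}\big]}{\bar Z_n^{\beta,\go}},
\qquad \gO_n(x)=\sum_{1\le i\le n}\go_i\ind_{\{S_i=x\}},
\]
and to bound the numerator from above and the denominator from below, the gain coming from the strictly positive charge bias $m:=\bbE_\delta(\go_1)=(\log M)'(\delta)>0$, which holds for $\delta>0$ by strict convexity of $\log M$ together with $(\log M)'(0)=\bbE(\go_1)=0$.

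The key deterministic input is a \emph{pathwise} lower bound on the energy when the range is small. Since $\sum_x\gO_n(x)=\gO_n$ and at most $R_n(S)$ of these terms are nonzero, Cauchy--Schwarz gives $\gO_n^2\le R_n(S)\sum_x\gO_n(x)^2$, whence on $\{R_n(S)\le c_1 n\}$ one has $\sum_x\gO_n(x)^2\ge \gO_n^2/(c_1 n)$, where crucially $\gO_n$ does not depend on the walk. This yields
\[
\bE\big[e^{-\beta\sum_x\gO_n(x)^2}\,\ind_{\{R_n(S)\le c_1 n\}}\big]\le e^{-\beta\gO_n^2/(c_1 n)}.
\]
For the denominator I would restrict the walk expectation to the single straight trajectory $S_i-S_{i-1}=e_1$ for all $i$ (a self-avoiding path of $\bP$-probability $(2d)^{-n}$), along which every site is visited once so that $\sum_x\gO_n(x)^2=\sum_{i=1}^n\go_i^2$; this gives the corrected lower bound
\[
\bar Z_n^{\beta,\go}\ge (2d)^{-n}\,e^{-\beta\sum_{i=1}^n\go_i^2}.
\]

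Combining the two displays and invoking the strong law of large numbers under $\bbP_\delta$ (namely $\gO_n/n\to m$ and $\tfrac1n\sum_{i\le n}\go_i^2\to\bbE_\delta(\go_1^2)<\infty$, both $\bbP_\delta$-a.s., the latter finite thanks to the exponential moment assumption), I obtain, for $\bbP_\delta$-a.e.\ $\go$,
\[
\tfrac1n\log\bar\bP_n^{\beta,\go}(R_n(S)\le c_1 n)\le -\frac{\beta m^2}{c_1}+\beta\,\bbE_\delta(\go_1^2)+\log(2d)+o(1).
\]
It then suffices to pick $c_1\in(0,1)$ small enough that $\beta m^2/c_1>\beta\,\bbE_\delta(\go_1^2)+\log(2d)$, after which $c_2:=\beta m^2/c_1-\beta\,\bbE_\delta(\go_1^2)-\log(2d)>0$ does the job. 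The only genuine subtlety---and presumably where the original argument needs amending---is that the energy bound must be deterministic in $S$ so as to survive inside the quenched random-walk expectation, while the partition function must be lower-bounded by an \emph{explicit} self-avoiding contribution on which the energy collapses to $\sum_i\go_i^2$; once both bounds are in place the statement reduces to optimising $c_1$ against the entropic cost $\log(2d)$ and the baseline energy $\beta\,\bbE_\delta(\go_1^2)$.
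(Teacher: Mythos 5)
Your proposal is correct and follows essentially the same route as the paper's (corrected) proof: the pathwise bound $\sum_x\gO_n(x)^2\ge \gO_n^2/R_n(S)$ (your Cauchy--Schwarz is the paper's Jensen step), the lower bound on $\bar Z_n^{\beta,\go}$ via the straight path costing $(2d)^{-n}e^{-\beta\sum_i\go_i^2}$, and the strong law of large numbers under $\bbP_\delta$ to reduce everything to choosing $c_1$ small. Your final constant $c_2=\beta m(\delta)^2/c_1-\beta\,\bbE_\delta(\go_1^2)-\log(2d)$ agrees with the paper's $c_2=\beta c_3$ after substituting $\bbE_\delta(\go_1^2)=m(\delta)^2+m'(\delta)$.
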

\begin{proof}[Proof of Proposition~\ref{pr:1}]
Let $\pi$ be the one-sided path that takes right-steps only, i.e., $\pi_i = (i, 0,\ldots 0)$ for $i\in\N_0$. 
Let us denote by $\bar H_n^{\omega}(S)$ the Hamiltonian in~\eqref{eq:part-fct-conv2} and estimate
\begin{equation}
\label{eq:2}
\bar Z_n^{\beta, \go} \geq \, \bE\Big[e^{-\beta \bar H_n^{\omega}(S)}\,
\ind_{\{S_i=\pi_i\,\forall\,1\leq i\leq n\}}\Big]
= (\tfrac{1}{2d})^n\,e^{-\beta\sum_{i=1}^n \omega_i^2}
= (\tfrac{1}{2d})^n\,e^{-\beta \bbE_\gd(\go_1^2) n + o(n)},
\end{equation}
where the $o(\cdot)$ holds $\bbP_\gd$-a.s, by the law of large numbers.
Moreover, by Jensen's inequality we have (recall~\eqref{eq:bar-pf},~\eqref{eq:def-Omega-n-x} and the definition of $\ell_n$ in~\eqref{eq:wsaw2})
\begin{equation}
\begin{aligned}
\bar H_n^\omega(S) 
&= \sum_{ {x\in\Z\colon} \atop {\ell_n(x)>0} } 
\left(\sum_{i=1}^n \omega_i \ind_{\{S_i=x\}}\right)^2
= R_n(S) \left[\frac{1}{R_n(S)} \sum_{ {x\in\Z\colon} \atop {\ell_n(x)>0}} 
\left(\sum_{i=1}^n \omega_i \ind_{\{S_i=x\}}\right)^2\right]\\
\label{eq:3}
&\geq R_n(S) \left(\frac{1}{R_n(S)} \sum_{ {x\in\Z\colon} \atop {\ell_n(x)>0}}    
\sum_{i=1}^n \omega_i \ind_{\{S_i=x\}}\right)^2
= \frac{1}{R_n(S)}\,\Omega_n^2.
\end{aligned}
\end{equation}
Combining (\ref{eq:2}--\ref{eq:3}), we obtain 
\begin{equation}
\label{eq:4}
\begin{aligned}
\bar \bP_n^{\beta,\omega}(R_n(S) \leq c_1n)
&\leq e^{\beta [\bbE_\gd(\go_1^2) + o(1)]n}\, (2d)^n\, \bE\left[\exp\left\{-\frac{\beta}{R_n(S)}\,\Omega_n^2\right\}\, 
\ind_{\{R_n(S) \leq c_1n\}}\right]\\
&\leq \exp\left\{-\beta n \left[\frac{1}{c_1n^2}\,\Omega_n^2 -\bbE_\gd(\go_1^2) -\tfrac{\log (2d)}{\beta} +o(1) \right]\right\}.
\end{aligned}
\end{equation}
Note that $\bbE_\gd(\go_1^2)= m(\gd)^2 + m'(\gd)$, where $m(\gd) := (\log M)'(\gd)$. By the strong law of large numbers for $\omega$, we have $\lim_{n\to \infty} n^{-1} \Omega_n
=  m(\delta)>0$ for $\bbP_\delta$-a.e.\ $\omega$, and so 
the term between square brackets equals $c_3[1+o(1)]$ with $c_3=(\tfrac{1}{c_1}-1) m(\delta)^2- m'(\gd)
-\tfrac{\log (2d)}{\beta}$. Therefore, by choosing $c_1>0$ small enough so that $c_3>0$, we get 
(\ref{eq:1}) with $c_2=\beta c_3$.
\end{proof}
The following proposition corrects~\cite[Proposition D.2]{CdHPP16}. The correction comes from the changes in the proof of Proposition~\ref{pr:1}.
\begin{proposition}
\label{pr:2}
Assume that $d=1$ and that $\gb,\gd\in(0,\infty)$ satisfy
\beq
\label{eq:suff-cond}
m(\gd)^2 - m'(\gd) > \frac{\log 2}{\gb}.
\eeq
Then, there exists $\epsilon=\epsilon(\delta,\gb)>0$ such that:
\begin{equation}
\lim_{n\to\infty} \bar \bP_n^{\beta,\omega}\big(n^{-1}S_n>\epsilon \mid S_n>0\big) = 1.
\end{equation}
\end{proposition}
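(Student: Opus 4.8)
The plan is to prove the equivalent statement that the conditional probability of the complementary event vanishes, i.e. that $\bar \bP_n^{\gb,\go}(0<S_n\le \epsilon n\mid S_n>0)\to 0$ for a suitably small $\epsilon>0$ and $\bbP_\gd$-a.e.\ $\go$. Writing this conditional probability as $\bar Z_n^{\gb,\go}(0<S_n\le\epsilon n)/\bar Z_n^{\gb,\go}(S_n>0)$, I observe that the full partition function $\bar Z_n^{\gb,\go}$ cancels; so, unlike a free-energy computation, I only need to compare two \emph{restricted} partition functions. The two ingredients will be (a) the range estimate of Proposition~\ref{pr:1}, used at the threshold value $c_1=1/2$, and (b) a purely deterministic one-dimensional fact that turns ``large range'' into ``ballistic endpoint, given a positive endpoint''.

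For ingredient (b), I would use that in dimension one the visited set $\{S_1,\dots,S_n\}$ is an interval of integers, so that $R_n(S)=\max_i S_i-\min_i S_i+1$, and split according to whether the global maximum $M$ is attained before or after the global minimum $m$. Counting the steps needed to travel $0\to M\to m\to S_n$ versus $0\to m\to M\to S_n$ shows that the first ordering forces $S_n\le n-2(R_n-1)$ while the second forces $S_n\ge 2(R_n-1)-n$. Hence, as soon as $S_n>0$ and $R_n>n/2+1$, the first ordering is impossible and one gets $S_n\ge 2R_n-n-2$. Consequently $\{0<S_n\le \epsilon n\}\subseteq\{R_n(S)\le \tfrac{1+\epsilon}{2}n+1\}$, so the numerator is governed by a range event of exactly the kind controlled in Proposition~\ref{pr:1}.

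For the quantitative comparison I would rerun the estimate from the proof of Proposition~\ref{pr:1}, keeping track of absolute (rather than normalized) sizes. Using the Jensen bound~\eqref{eq:3} on the Hamiltonian, on $\{R_n\le c_1 n\}$ one has $\bar Z_n^{\gb,\go}(R_n\le c_1n)\le \bE[e^{-\gb \gO_n^2/R_n}\ind_{\{R_n\le c_1 n\}}]\le e^{-\gb \gO_n^2/(c_1 n)}$ (using that $\gO_n$ is a constant under $\bE$), which by the law of large numbers equals $\exp(-\tfrac{\gb m(\gd)^2}{c_1}n+o(n))$ $\bbP_\gd$-a.s. For the denominator I use the straight right-going path $\pi$ exactly as in~\eqref{eq:2}, giving $\bar Z_n^{\gb,\go}(S_n>0)\ge 2^{-n}e^{-\gb\, \bbE_\gd(\go_1^2)\,n+o(n)}$. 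With $c_1=(1+a)/2$, the exponential rate of the ratio is $\log 2+\gb\,\bbE_\gd(\go_1^2)-\tfrac{2\gb m(\gd)^2}{1+a}$, which at $a=0$, using $\bbE_\gd(\go_1^2)=m(\gd)^2+m'(\gd)$, equals $\log 2-\gb\,(m(\gd)^2-m'(\gd))$. This is negative precisely under the standing assumption $m(\gd)^2-m'(\gd)>\tfrac{\log 2}{\gb}$, hence stays negative for small $a>0$ by continuity; fixing such an $a$ and any $\epsilon<a$ makes the ratio decay exponentially $\bbP_\gd$-a.s., which is the claim.

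The main obstacle is aligning the two exponential rates so that the threshold is exactly $m(\gd)^2-m'(\gd)>\log 2/\gb$: this pins Proposition~\ref{pr:1} to the single value $c_1=1/2$ (the source of the correction to~\cite{CdHPP16}), and it requires the energetic gain from concentrating the large squared charge $\gO_n^2\approx m(\gd)^2 n^2$ onto a range of at most $n/2$ sites to beat the entropic factor $2^n$ from the walk normalization. Everything else — the deterministic range/endpoint inequality and the almost-sure replacements $\gO_n\sim m(\gd)n$ and $\sum_i\go_i^2\sim \bbE_\gd(\go_1^2)n$ — is routine.
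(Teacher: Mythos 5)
Your proof is correct and follows essentially the same route as the paper's: Proposition~\ref{pr:1} applied at some $c_1>1/2$ (which is exactly where the hypothesis $m(\gd)^2-m'(\gd)>\log 2/\gb$ enters, via $c_3>0$), combined with a deterministic one-dimensional fact converting a range larger than $n/2$ plus a positive endpoint into $S_n\ge 2R_n(S)-n-O(1)$. The only cosmetic differences are that the paper phrases the geometric step via the count of sites visited exactly once, using $|\{x\colon \ell_n(x)=1\}|\ge 2R_n(S)-n$, rather than via your max/min step-counting, and that it handles the conditioning on $\{S_n>0\}$ implicitly, whereas you lower-bound $\bar Z_n^{\gb,\go}(S_n>0)$ directly by the straight right-going path --- your version is, if anything, more explicit on that point.
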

\begin{remark} 
When $\bbP(\go_1\in \cdot) = \cN(0,1)$ then $m(\gd)= \gd$ and $m'(\gd)=1$, so that the condition in~\eqref{eq:suff-cond} is equivalent to $\gd^2>1+(\log 2)/\gb$ (that is a non-empty condition).
When $\bbP(\go_1\in \cdot)$ is the uniform probability measure on $\{-1,1\}$ then $m(\gd) = \tanh(\gd)$ and $m'(\gd)=1-\tanh(\gd)^2$, so that the condition in~\eqref{eq:suff-cond} is equivalent to $2\tanh(\gd)^2>1+(\log 2)/\gb$. The latter condition is non-empty if and only if $\gb >\log 2$. A similar remark can be made for all \emph{bounded} charge distributions.
\end{remark}
\begin{proof}[Proof of Proposition~\ref{pr:2}]
Recall the value of $c_3$ in the proof of Proposition~\ref{pr:1}. If~\eqref{eq:suff-cond} is satisfied then we can choose $c_1 > \tfrac12$ in Proposition~{\rm \ref{pr:1}} 
and use the inequality
\begin{equation}
\frac{\left|\{x\in \bbZ\colon\, \ell_n(x) = 1\}\right|}{n} \geq \frac{2R_n(S)}{n} -1
\end{equation}
to conclude that a positive fraction of the sites are visited precisely once. Consequently, if the 
polymer chain chooses to go to the right, then $S_n/n$ has a strictly positive $\liminf$.
\end{proof}
\section{Conclusion and perspectives}
For the \emph{directed} charged polymer with quenched \emph{centered} charges, we have reviewed and provided detailed proofs of past results concerning the freezing transition predicted by Derrida, Griffiths and Higgs. We showed that the quenched free energy enjoys a variational representation based on a quenched (or conditional) large deviation principle, much like pinning and copolymer models. Lower bounds are derived from this variational formula, while upper bounds are obtained through annealing. It is however not clear to us whether the freezing transition can be read from the free energy. Nevertheless, we hope that the efforts to obtain sharper estimates will lead to a better understanding of the model.\\

\par For the \emph{undirected} charged polymer with quenched \emph{biased} charges, one can show that the number of visited vertices is proportional to a positive fraction of the chain, with large probability. This property may be upgraded to true ballistic behavior, at least for a large enough charge bias, with an extra condition on the temperature for some charge distributions.\\

\par Let us finally close the paper with a (non-exhaustive) list of unsettled issues.
\begin{itemize}
\item For the directed model:
\begin{enumerate}
\item Settle Items (1) and (2) in Conjecture~\ref{conj:DGH}.
\item Give a rigorous proof for the existence of a low temperature regime in the case of random centered charges, as in Item (3) in Conjecture~\ref{conj:DGH}. 
\item What is the critical temperature for the di-block polymer from~\eqref{eq:two-block-charges}? The best estimate so far is $\log(\frac{1+\sqrt{5}}{2}) \le \gb_c^{\rm di-block} \le 2\log 2$.
\item Is the inequality $\bar F_{\rm que}(\gb) \ge -\log 2$ strict? Is it valid beyond the binary charge distribution? The strategy used in the proof of Item (2) in Proposition~\ref{pr:prop-fe} could be adapted to other charge distributions (e.g.\ Gaussian charges) by folding the chain along the sequence of stopping times defined by $\gs_0=0$ and
\beq
\gs_i = \min\{k> \gs_{i-1}\colon \gO_k > \gO_{\gs_{i-1}}\}, \qquad i\in\bbN, 
\eeq
instead of~\eqref{eq:random_times} (assuming w.l.o.g.\ that $\gO_n >0$). However, the fact that we must stop at $i_n := \min\{i\in\bbN\colon \gO_{\gs_i}\ge \gO_n\}$, that is not a stopping time, would probably lead to cumbersome technicalities.
\item Is there a minimizer for the variational problem in~\eqref{eq:var-repr-J}? If so, is it unique? 
\item Is the quenched free energy analytic as a function of the inverse temperature?
\item Find the value of the constant $\mathsf{C}$ in the high temperature expansion
\beq
\bar F_{\rm que}(\gb) = -\gb + \mathsf{C}\gb^2 + o(\gb^2), \qquad \gb\to 0.
\eeq
Is it universal? Our best estimate so far is $1/2 \le \mathsf{C} \le 2$ for binary charges and $1/2 \le \mathsf{C} \le 3$ for Gaussian charges.
\item Can the lower bound in Theorem~\ref{thm:que-LB} be improved by considering a more subtle strategy (looking beyond consecutive monomers)?
\item Can we prove that $\bar F_{\rm que}(\gb) \sim -\log 2$ as $\gb\to\infty$ in the case of binary charges? An upper bound is missing.
\end{enumerate}

\item For the undirected model:
\begin{enumerate}
\item Can we prove ballistic behavior for \emph{every} $\gd,\gb\in(0,\infty)$? 
\item What can be said about the case of \emph{neutral} charges ($\gb=0$)?
\end{enumerate}
\end{itemize}
\appendix

\section{Technical estimates}

\begin{lemma}
\label{lem:fourth-mom}
In the case of $\{-1,1\}$-valued centered i.i.d.\ charges, for every $\ell \ge 1$,
\beq
\bbE(\gO_\ell^4) = 3 \ell^2 - 2\ell.
\eeq
\end{lemma}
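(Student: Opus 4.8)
The plan is to compute the fourth moment directly from the multilinear expansion of $\gO_\ell^4$, exploiting independence and the symmetry of the charge law. First I would write
\[
\bbE(\gO_\ell^4) = \sum_{i_1,i_2,i_3,i_4=1}^\ell \bbE(\go_{i_1}\go_{i_2}\go_{i_3}\go_{i_4}),
\]
and observe that, by independence of the charges together with $\bbE(\go_1)=\bbE(\go_1^3)=0$, a summand is nonzero only if each index value occurring among $i_1,\ldots,i_4$ appears an even number of times. Since the charges are $\{-1,1\}$-valued we moreover have $\go_i^2=1$ almost surely, hence $\bbE(\go_1^2)=\bbE(\go_1^4)=1$, so each surviving summand equals exactly $1$.

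It then remains to count the surviving index configurations, of which there are two types. In type (a) all four indices coincide, giving $\ell$ tuples. In type (b) there are exactly two distinct index values, each appearing twice: for each unordered pair of distinct values (there are $\binom{\ell}{2}$ of them) the number of orderings of the multiset $\{a,a,b,b\}$ is $\tfrac{4!}{2!\,2!}=6$, so this type contributes $6\binom{\ell}{2}=3\ell(\ell-1)$ tuples. Summing the two contributions yields
\[
\bbE(\gO_\ell^4) = \ell + 3\ell(\ell-1) = 3\ell^2 - 2\ell,
\]
as claimed.

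The computation presents no genuine obstacle beyond the combinatorial bookkeeping in the counting step; the binary nature of the charges is a convenience, as it makes every nonvanishing moment equal to $1$ and removes any need to track $\bbE(\go_1^4)$ separately. As an independent check one could instead write $\gO_\ell = 2B_\ell - \ell$ with $B_\ell$ a $\mathrm{Binomial}(\ell,1/2)$ variable and read off $\bbE(\gO_\ell^4)$ from the known central moments of the binomial law; this route is more computational but confirms the same value.
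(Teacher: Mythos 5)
Your proof is correct and is essentially the same computation as the paper's: both reduce to counting which index tuples survive after using independence, the vanishing of odd moments, and $\go_i^2=1$. The paper merely organizes the bookkeeping by first writing $\gO_\ell^2=\ell+2\sum_{i<j}\go_i\go_j$ and squaring, whereas you expand the fourth power directly; the surviving terms and the final count $\ell+3\ell(\ell-1)$ are the same.
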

\begin{proof}[Proof of Lemma~\ref{lem:fourth-mom}] We have
\beq
\bbE(\gO_\ell^4) = \bbE\Big(\Big[\ell + 2 \sum_{1\le i<j\le \ell} \go_i \go_j\Big]^2\Big) = \ell^2 + 4 \bbE\Big(\sumtwo{1\le i<j\le \ell}{1\le u<v\le \ell} \go_i \go_j \go_u \go_v\Big),
\eeq
and the result follows by noticing that only the terms for which $i=u$ and $j=v$ give a nonzero (and actually unit) contribution to the last expectation.
\end{proof}

\begin{lemma}
\label{lem:ann-bound}
Let $|z|<1$. Then,
\beq
\label{eq:ann-bound}
\sum_{\ell \ge 1} z^\ell (1  + {\sf c}_1\ell + {\sf c}_2\ell^2) < 1
\Leftrightarrow 
2z^3 + ({\sf u}_1 - 5)z^2 + ({\sf u}_2+4)z - 1 < 0,
\eeq
where
\beq
{\sf u}_1 = {\sf c}_2 - {\sf c}_1, \qquad
{\sf u}_2 = {\sf c}_1 + {\sf c}_2.
\eeq
\end{lemma}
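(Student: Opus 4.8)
The plan is to evaluate the series on the left of~\eqref{eq:ann-bound} in closed form and then clear denominators, reducing the claimed equivalence to a reversible algebraic computation. First I would recall that for $|z|<1$ the three elementary sums converge absolutely and satisfy
\[
\sum_{\ell\ge1} z^\ell = \frac{z}{1-z},\qquad \sum_{\ell\ge1}\ell z^\ell = \frac{z}{(1-z)^2},\qquad \sum_{\ell\ge1}\ell^2 z^\ell = \frac{z(1+z)}{(1-z)^3},
\]
which follow from differentiating the geometric series. Writing the left-hand side of~\eqref{eq:ann-bound} as the linear combination of these with coefficients $1,{\sf c}_1,{\sf c}_2$, I obtain a single rational function of $z$ with common denominator $(1-z)^3$.

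The key step is then to clear this denominator. Since the lemma concerns real $z$ with $|z|<1$, we have $1-z>0$ and hence $(1-z)^3>0$, so multiplying the inequality ``$\text{(series)}<1$'' through by $(1-z)^3$ preserves its direction and turns it into the polynomial inequality
\[
z(1-z)^2 + {\sf c}_1\,z(1-z) + {\sf c}_2\,z(1+z) < (1-z)^3.
\]
Expanding both sides and moving everything to one side, the left member becomes $z^3 + ({\sf c}_2-{\sf c}_1-2)z^2 + ({\sf c}_1+{\sf c}_2+1)z$ and the right member is $1-3z+3z^2-z^3$; collecting powers of $z$ gives a cubic whose coefficients I would read off directly: the $z^3$ coefficient is $2$, the constant term is $-1$, the $z^2$ coefficient is $({\sf c}_2-{\sf c}_1)-5$, and the $z$ coefficient is $({\sf c}_1+{\sf c}_2)+4$. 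Substituting the definitions ${\sf u}_1={\sf c}_2-{\sf c}_1$ and ${\sf u}_2={\sf c}_1+{\sf c}_2$ yields precisely the right-hand side of~\eqref{eq:ann-bound}.

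There is no substantial obstacle: the statement is a finite algebraic identity dressed as an equivalence of inequalities. The only point that genuinely requires attention is the observation that $(1-z)^3$ is strictly positive on $(-1,1)$, which is exactly what upgrades the argument from a one-sided implication to the stated equivalence — since every manipulation (summing the series, multiplying by the positive quantity $(1-z)^3$, expanding) is reversible. The remaining work is routine bookkeeping that I would carry out in one or two displayed lines.
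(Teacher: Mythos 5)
Your proof is correct and follows essentially the same route as the paper: both evaluate the series in closed form over the common denominator $(1-z)^3$ (the paper groups the terms as $1+(\mathsf{c}_1+\mathsf{c}_2)\ell+\mathsf{c}_2\ell(\ell-1)$, you use $\sum\ell^2 z^\ell$ directly, which is immaterial) and then clear the positive denominator. Your explicit remark that $(1-z)^3>0$ on $(-1,1)$ is exactly the point that makes the manipulation an equivalence, which the paper leaves as ``a straightforward computation.''
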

\begin{proof}[Proof of Lemma~\ref{lem:ann-bound}]
The sum in the l.h.s.\ of~\eqref{eq:ann-bound} equals
\beq
\sum_{\ell \ge 1} z^\ell   + [\sfc_1 + \sfc_2] \sum_{\ell \ge 1} \ell z^\ell + {\sf c}_2\sum_{\ell \ge 1} \ell(\ell-1) z^\ell 
=\frac{z}{1-z} + \frac{[\sfc_1 + \sfc_2]z}{(1-z)^2} + \frac{2\sfc_2z^2}{(1-z)^3}.
\eeq
The lemma follows from a straightforward computation.
\end{proof}
\begin{lemma}
\label{lem:deg3-polynomial}
For every $\sfu \in \bbR^2$ and $z\in \bbR$, let
\beq
P_\sfu(z) = 2 z^3 + (\sfu_1 - 5)z^2 + (\sfu_2 + 4)z -1.
\eeq
There exists a neighborhood of the origin $\cN\subseteq \bbR^2$ and a function $\sfu\in\cN \mapsto z_0(\sfu)$ such that, for every $u\in \cN$ such that $\sfu_1+ \sfu_2 > 0$, $z_0(\sfu)$ is the only root of $P_\sfu$ in $(-\infty, 1)$. Moreover,
\beq
z_0(\sfu) = \frac12 - \frac{\sfu_1}{2}- \sfu_2 + 2\sfu_1^2 + 6\sfu_2^2 + 7 \sfu_1 \sfu_2 + o(\|\sfu\|^2), \qquad \sfu \to 0.
\eeq
\end{lemma}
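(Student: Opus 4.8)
The plan is to treat $P_\sfu$ as an analytic perturbation of $P_0$ and to track its three roots separately. I would start by factoring the unperturbed polynomial,
\[
P_0(z) = 2z^3 - 5z^2 + 4z - 1 = (2z-1)(z-1)^2,
\]
so that $P_0$ has a simple root at $z=\tfrac12\in(-\infty,1)$ and a double root at $z=1$. Since $P_0'(\tfrac12)=\tfrac12\neq 0$ and $(\sfu,z)\mapsto P_\sfu(z)$ is polynomial, the implicit function theorem yields an analytic map $\sfu\mapsto z_0(\sfu)$ on a neighborhood of the origin with $z_0(0)=\tfrac12$ and $P_\sfu(z_0(\sfu))=0$, which is moreover the unique root of $P_\sfu$ in a fixed small neighborhood of $\tfrac12$.

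To get the expansion I would substitute $z_0=\tfrac12+\zeta$ into the defining identity, which, using $P_0(z_0)=(2z_0-1)(z_0-1)^2$, reads
\[
(2z_0-1)(z_0-1)^2 = -\sfu_1 z_0^2 - \sfu_2 z_0,
\]
and solve for $\zeta$ order by order. The linear order gives $\zeta=-\tfrac12\sfu_1-\sfu_2+O(\|\sfu\|^2)$, and substituting this back into the quadratic-in-$\zeta$ and the $\sfu\zeta$ terms produces the second-order correction $2\sfu_1^2+7\sfu_1\sfu_2+6\sfu_2^2$, which matches the claimed formula for $z_0(\sfu)$.

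It remains to control the two roots that bifurcate from the double root at $z=1$, and this is the main obstacle. Dividing, I would write $P_\sfu(z)=2\bigl(z-z_0(\sfu)\bigr)\bigl(z^2+p(\sfu)z+q(\sfu)\bigr)$ with $p,q$ analytic, $p(0)=-2$, $q(0)=1$, extracting $p,q$ to first order from Vieta's relations $z_0z_1z_2=\tfrac12$ and $z_0+z_1+z_2=\tfrac{5-\sfu_1}{2}$ together with the expansion of $z_0$. The two roots near $1$ are non-real precisely when the discriminant $\Delta(\sfu):=p(\sfu)^2-4q(\sfu)$ is negative, and a direct computation gives
\[
\Delta(\sfu) = -4(\sfu_1+\sfu_2) + O(\|\sfu\|^2).
\]
Hence, on a small enough neighborhood $\cN$ and under $\sfu_1+\sfu_2>0$, the linear term forces $\Delta(\sfu)<0$, so these two roots form a complex-conjugate pair and in particular lie outside $(-\infty,1)$. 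Since the factorization exhausts all three roots and $z_0(\sfu)$ stays near $\tfrac12<1$, this identifies $z_0(\sfu)$ as the only root of $P_\sfu$ in $(-\infty,1)$.

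The delicate point is exactly the sign of $\Delta$: the argument hinges on the linear term $-4(\sfu_1+\sfu_2)$ dominating the quadratic remainder, so $\cN$ must be chosen accordingly. Equivalently, the obstruction can be phrased as keeping the local minimum of $P_\sfu$ near $z=1$ positive; expanding $P_\sfu(1+w)=(\sfu_1+\sfu_2)+(2\sfu_1+\sfu_2)w+(1+\sfu_1)w^2+2w^3$ and locating that minimum shows its value is $(\sfu_1+\sfu_2)-\tfrac14(2\sfu_1+\sfu_2)^2+O(\|\sfu\|^3)$, making transparent that the first-order positivity of $\sfu_1+\sfu_2$ is what pushes the two roots off the real axis (and flagging that genuine care with the second-order remainder is needed along directions where $\sfu_1+\sfu_2$ is itself of second order, as in the intended application where $\sfu_1+\sfu_2\sim 3\gb^2$ dominates $\tfrac14(2\sfu_1+\sfu_2)^2\sim\tfrac14\gb^2$).
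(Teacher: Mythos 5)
Your treatment of the expansion is the same as the paper's: factor $P_0(z)=(2z-1)(z-1)^2$, invoke the implicit function theorem at the simple root $\tfrac12$, and match powers of $\sfu$; your coefficients $-\tfrac12\sfu_1-\sfu_2+2\sfu_1^2+7\sfu_1\sfu_2+6\sfu_2^2$ agree with the gradient $(-\tfrac12,-1)^\intercal$ and the Hessian computed in the paper. Where you genuinely add something is the uniqueness claim: the paper dismisses the two roots bifurcating from the double root at $1$ with the single observation $P_\sfu(1)=\sfu_1+\sfu_2$, while you track them through the discriminant of the cofactor quadratic and through the local minimum of $P_\sfu$ near $1$.

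The caveat you flag at the end is not a formality: the lemma as stated is false, and your intermediate sentence that ``on a small enough neighborhood $\cN$ and under $\sfu_1+\sfu_2>0$, the linear term forces $\Delta(\sfu)<0$'' cannot be salvaged, because the hypothesis $\sfu_1+\sfu_2>0$ gives no lower bound on $\sfu_1+\sfu_2$ in terms of $\|\sfu\|$. Concretely, take $\sfu=(t+t^3,-t)$ with $t\downarrow 0$: then $\sfu_1+\sfu_2=t^3>0$, but the discriminant of the quadratic part of your expansion $P_\sfu(1+w)=(\sfu_1+\sfu_2)+(2\sfu_1+\sfu_2)w+(1+\sfu_1)w^2+2w^3$ equals $\sfu_2^2-4(\sfu_1+\sfu_2)=t^2-4t^3>0$, and $P_\sfu$ in fact has three real roots, approximately $\tfrac12+\tfrac t2$, $1-t$ and $1-t^2$, all lying in $(-\infty,1)$ (for $t=0.1$ one checks $P_\sfu(0.85)>0$, $P_\sfu(0.9)<0$ and $P_\sfu(1)=10^{-3}>0$). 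The paper's justification fails for the same reason: $P_\sfu(1)>0$ only excludes the two perturbed roots straddling $1$, not both sitting just below it. The correct reading is the one you isolate, namely that the local minimum of $P_\sfu$ near $1$, whose value is $(\sfu_1+\sfu_2)-\tfrac14(2\sfu_1+\sfu_2)^2+O(\|\sfu\|^3)$, must be positive; this strengthened condition does hold where the lemma is applied, since there $\sfu_1+\sfu_2=3\gb^2$ while $\tfrac14(2\sfu_1+\sfu_2)^2=\tfrac14\gb^2+O(\gb^3)$, so Propositions~\ref{pr:annealed-bound-hightemp} and~\ref{pr:annealed-bound-hightempG} are unaffected. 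You should state that condition explicitly (or restrict $\cN$ to $\{\sfu\colon \sfu_1+\sfu_2>\sfu_2^2\}$, say) rather than leave it as a parenthetical warning.
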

\begin{proof}[Proof of Lemma~\ref{lem:deg3-polynomial}]
First, observe that
\beq
P_0(z) = (2z-1)(z-1)^2
\eeq
admits $1/2$ as a simple root. The result follows from the Implicit Function Theorem and an explicit second-order Taylor expansion. Letting
\beq
z_0(\sfu) = \frac12 + \langle(\sfD z_0)(0), \sfu \rangle + \frac12 \langle \sfu, (\sfD^2 z_0)(0) \sfu \rangle + o(\|\sfu\|^2), \qquad \sfu \to 0,
\eeq
(with column vectors) we obtain
\beq
\ba
0 &= P_\sfu(z_0(\sfu))= P_0(z_0(\sfu)) + \sfu_1 z_0(\sfu)^2 + \sfu_2 z_0(\sfu)\\
&= P_0'(\tfrac12)[z_0(\sfu)-\tfrac12] + \tfrac12 P_0''(\tfrac12)[z_0(\sfu)-\tfrac12]^2\\
&\qquad + \tfrac14 \sfu_1 + \sfu_1[z_0(\sfu)-\tfrac12] 
+\tfrac12 \sfu_2 + \sfu_2[z_0(\sfu)-\tfrac12] + o(\|\sfu\|^2)\\
&= \langle \tfrac12 (\sfD z_0)(0) + (\tfrac 14, \tfrac 12)^\intercal, \sfu \rangle\\
&\qquad + \langle \sfu, 
[\tfrac14 (\sfD^2 z_0)(0) - 2(\sfD z_0)(0)(\sfD z_0)(0)^\intercal + (1,1)^\intercal (\sfD z_0)(0)^\intercal]
 \sfu \rangle + o(\|\sfu\|^2).
\ea
\eeq
Therefore,
\beq
(\sfD z_0)(0) =
\left(
\begin{array}{c}
-1/2 \\
-1
\end{array}
\right),
\qquad
(\sfD^2 z_0)(0) =
\left(
\begin{array}{cc}
4&7 \\
7& 12
\end{array}
\right).
\eeq
The restriction that $\sfu_1+ \sfu_2 > 0$ in the statement comes from the fact that $P_\sfu(1) = \sfu_1+ \sfu_2$ and allows us to verify that $z_0(\mathsf{u})$ is indeed the only root in $(-\infty,1)$ rather than in a neighborhood of $1/2$.
\end{proof}
\section{High-temperature expansion of the finite-volume free energy}
In this section we provide the necessary details for the computation behind Remark~\ref{rmk:naive-exp}.
\begin{proposition} 
\label{pr:finite-fe-high-temp}
\label{pr:naive-exp-high-temp} In addition to the $\go_i$'s being i.i.d.\ square integrable random variables with zero mean and unit variance, we assume that $\bbE(\go_1^4) < \infty$.
For every $n\in\bbN$, the finite-volume (averaged) quenched free energy has the following high-temperature ($\gb\to 0$) expansion:
\beq
\frac1n \bbE \log \bar Z_n^{\gb,\go} = - \gb + \mathfrak{c}_n\gb^2 + o(\gb^2), \qquad \text{with} \quad
\lim_{n\to \infty}\mathfrak{c}_n = 4/3.
\eeq
\end{proposition}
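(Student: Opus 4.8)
The plan is to Taylor-expand $\log\bar Z_n^{\gb,\go}$ to second order in $\gb$ with an exact integral remainder, average over the charges, and then identify the $\gb^2$-coefficient and its $n\to\infty$ limit. Set $V:=\sum_{x\ge1}\gO_n(x)^2=\sum_{1\le i,j\le n}\go_i\go_j\ind_{\{S_i=S_j\}}\ge0$, so that $\bar Z_n^{\gb,\go}=\bE(e^{-\gb V})$ and $\log\bar Z_n^{0,\go}=0$. By the same computation as in the proof of Proposition~\ref{pr:prop-fe}, $\partial_\gb\log\bar Z_n^{\gb,\go}=-\bar\bE_n^{\gb,\go}(V)$ and $\partial_\gb^2\log\bar Z_n^{\gb,\go}=\bar\bV_n^{\gb,\go}(V)$. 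For fixed $\go$ and $n$, Taylor's theorem with integral remainder then gives the exact identity
\[
\log\bar Z_n^{\gb,\go}=-\gb\,\bE(V)+\int_0^\gb(\gb-s)\,\bar\bV_n^{s,\go}(V)\,\dd s .
\]
Averaging over $\go$, using Tonelli (every term is nonnegative once the linear part is isolated) and $\bbE\,\bE(V)=\bE\,\bbE(V)=n$ (the charges being centered with unit variance), I would obtain
\[
\tfrac1n\bbE\log\bar Z_n^{\gb,\go}=-\gb+\tfrac1n\int_0^\gb(\gb-s)\,\psi_n(s)\,\dd s,\qquad \psi_n(s):=\bbE\big[\bar\bV_n^{s,\go}(V)\big].
\]

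The crux is to prove that $\psi_n$ is bounded on $[0,\infty)$ and continuous at $s=0$. For fixed $\go$ the walk ranges over the finite set $\cS_n$, so $\bar\bV_n^{s,\go}(V)$ is a ratio of finite sums of exponentials in $s$, hence continuous, with $\bar\bV_n^{s,\go}(V)\to\bar\bV_n^{0,\go}(V)$ as $s\downarrow0$. For the domination I would use the deterministic pointwise bound $0\le V\le M^2$, where $M:=\sum_{i=1}^n|\go_i|$, which follows from $\sum_x|\gO_n(x)|\le\sum_i|\go_i|=M$ and $\sum_x\gO_n(x)^2\le(\sum_x|\gO_n(x)|)^2$. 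Hence $\bar\bV_n^{s,\go}(V)\le\bar\bE_n^{s,\go}(V^2)\le M^4$ for all $s\ge0$, and $\bbE(M^4)<\infty$ precisely because $\bbE(\go_1^4)<\infty$. This is the step where the fourth-moment assumption enters, and it is the main obstacle: the naive Taylor-remainder bound would involve the third cumulant of $V$ under the tilted polymer measure, which — through the partition function in the denominator — would demand sixth moments; bounding the tilted variance directly by the deterministic quantity $M^4$ avoids the tilting altogether. Dominated convergence then gives $\psi_n(s)\to\psi_n(0)$ as $s\downarrow0$, so $\sup_{s\in[0,\gb]}|\psi_n(s)-\psi_n(0)|=o(1)$, and therefore $\int_0^\gb(\gb-s)\psi_n(s)\,\dd s=\tfrac{\gb^2}2\psi_n(0)+o(\gb^2)$. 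This yields the expansion with $\mathfrak c_n=\tfrac1{2n}\psi_n(0)=\tfrac1{2n}\bbE[\bar\bV_n^{0,\go}(V)]$.

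To evaluate $\mathfrak c_n$ I would note that the diagonal part $\sum_i\go_i^2$ of $V$ is constant over $\cS_n$ and so does not affect the variance: $\bar\bV_n^{0,\go}(V)=\bar\bV_n^{0,\go}(B)$ with $B:=2\sum_{1\le i<j\le n}\go_i\go_j\ind_{\{S_i=S_j\}}$. Expanding the variance and averaging, only the diagonal pairs survive, since for $i<j$ and $k<l$ one has $\bbE(\go_i\go_j\go_k\go_l)=\ind_{\{(i,j)=(k,l)\}}$ (only second moments are needed here); writing $p_{ij}:=\bP(S_i=S_j)$ this gives $\bbE[\bar\bV_n^{0,\go}(V)]=4\sum_{1\le i<j\le n}p_{ij}(1-p_{ij})$. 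Because $S_j-S_i\sim\bin(j-i,\tfrac12)$ for $1\le i<j$, one has $p_{ij}=2^{-(j-i)}$, and grouping by $m=j-i$,
\[
\mathfrak c_n=\frac2n\sum_{m=1}^{n-1}(n-m)\,2^{-m}\big(1-2^{-m}\big).
\]

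Finally, as $n\to\infty$ the weights $(n-m)/n\to1$ while the summand is dominated by $2\cdot2^{-m}$, so dominated convergence for series gives
\[
\lim_{n\to\infty}\mathfrak c_n=2\sum_{m\ge1}2^{-m}\big(1-2^{-m}\big)=2\Big(1-\tfrac13\Big)=\tfrac43,
\]
which completes the proof.
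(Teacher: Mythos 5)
Your proof is correct, and it lands on exactly the same second-order coefficient as the paper, namely $\mathfrak{c}_n=\tfrac1{2n}\bbE\bigl[\bar\bV_n^{0,\go}(V)\bigr]=\tfrac2n\sum_{m=1}^{n-1}(n-m)2^{-m}(1-2^{-m})\to 4/3$; the evaluation via $\bbE(\go_i\go_j\go_k\go_\ell)=\ind_{\{(i,j)=(k,\ell)\}}$ for $i<j$, $k<\ell$ and $\bP(S_i=S_j)=2^{-(j-i)}$ is the same computation the paper performs (there phrased as the surviving Cases (3)--(4) of the four-index sum). Where you genuinely diverge is in the control of the $o(\gb^2)$ error: the paper expands the exponential and then the logarithm directly, carrying $[1+o(1)]$ factors, whereas you write the exact second-order Taylor identity $\log\bar Z_n^{\gb,\go}=-\gb\,\bE(V)+\int_0^\gb(\gb-s)\,\bar\bV_n^{s,\go}(V)\,\dd s$ and dominate the tilted variance by the deterministic bound $V\le M^2$ with $M=\sum_{i\le n}|\go_i|$, so that dominated convergence of $\psi_n(s)\to\psi_n(0)$ closes the argument. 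This buys you two things: the interchange of $\bbE$ with the expansion is justified by Tonelli rather than left implicit, and the role of the hypothesis $\bbE(\go_1^4)<\infty$ is pinpointed (it is exactly what makes $\bbE(M^4)<\infty$, hence $\psi_n$ bounded); your observation that a Lagrange-remainder bound at third order would instead require sixth moments is a fair justification for the integral-remainder form. The two routes are equally valid for the statement as given; yours is the more careful rendering of the same cumulant expansion.
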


\begin{proof}[Proof of Proposition~\ref{pr:naive-exp-high-temp}] Let $n\in \bbN$. Expanding the exponential in~\eqref{eq:bar-pf}, we have
\beq
\bar Z_n^{\gb,\go} = 1 -\gb \sum_{x\in \bbN} \bE[\gO_n(x)^2] + [1+o(1)]\frac{\gb^2}{2} \sum_{x,y\in \bbN}\bE[\gO_n(x)^2\gO_n(y)^2].
\eeq
We now apply the logarithm and expand it up to the second-order term. Using that
\beq
\sum_{x\in \bbN} \bbE\bE[\gO_n(x)^2] = \sum_{x\in\bbN} \ell_n(x) = n,
\eeq
we find thereby:
\beq
\bbE \log \bar Z_n^{\gb,\go} = -\gb n + [1+o(1)] \frac{\gb^2}{2}\mathsf{C}_n,
\eeq
where
\beq
\ba
\mathsf{C}_n &:= \sum_{x,y\in \bbN} \bbE \cov(\gO_n(x)^2, \gO_n(y)^2)\\
&= \sum_{x,y\in \bbN} \bbE \bE^{\otimes 2}[\gO_n(x)^2 \gO_n(y)^2 - \gO_n(x)^2 \tilde \gO_n(y)^2],
\ea
\eeq
and $\tilde S$ is a replica (i.e.\ independent copy) of $S$ (but the fields $\gO_n$ and $\tilde \gO_n$ are built on a common sequence of charges $\go$). Recalling~\eqref{eq:def-Omega-n-x} and expanding all the squares in the above expression, $\mathsf{C}_n$ is found to be equal to: 
\beq
\sumtwo{x,y\in \bbN}{1\le i,j,k,\ell\le n} \bbE(\go_i\go_j\go_k\go_\ell) [\bP(S_i=S_j=x, S_k=S_\ell = y) - \bP(S_i=S_j=x)\bP(S_k=S_\ell = y)].
\eeq
Due to our set of assumptions on the charge sequence $\go$, only the four following cases may contribute to the sum:
\begin{enumerate}
\item $i=j=k=\ell$;
\item $i=j$ and $k=\ell$, but $i\neq k$;
\item $i=k$ and $j=\ell$, but $i\neq j$;
\item $i=\ell$ and $j=k$, but $i\neq j$.
\end{enumerate}
The reader may check that Cases (1) and (2) give a zero contribution (once we sum over $x$ and $y$) while Cases (3) and (4) give the same contribution. Therefore, as $n\to \infty$,
\beq
\mathsf{C}_n = 4 \sum_{1\le i < j \le n} [\bP(S_i = S_j) -  \bP(S_i = S_j)^2]\\
\sim 4n \sum_{\ell \ge 1} [2^{-\ell} - 4^{-\ell}] = \frac{8n}{3},
\eeq
which completes the proof.
\end{proof}
\bibliographystyle{abbrv}
\bibliography{QuenchedChargedPol.bib}

\end{document}